\def\E{\ifmmode{\mathbb E}\else{$\mathbb E$}\fi} %natural numbers
\def\N{\ifmmode{\mathbb N}\else{$\mathbb N$}\fi} %natural numbers%
\def\R{\ifmmode{\mathbb R}\else{$\mathbb R$}\fi} %real numbers
\def\Q{\ifmmode{\mathbb Q}\else{$\mathbb Q$}\fi} %rational numbers
\def\C{\ifmmode{\mathbb C}\else{$\mathbb C$}\fi} %complex numbers
\def\H{\ifmmode{\mathbb H}\else{$\mathbb H$}\fi} %complex numbers
\def\Z{\ifmmode{\mathbb Z}\else{$\mathbb Z$}\fi} %integers
\def\P{\ifmmode{\mathbb P}\else{$\mathbb P$}\fi} %real numbers
\def\T{\ifmmode{\mathbb T}\else{$\mathbb T$}\fi} %real numbers
\def\SS{\ifmmode{\mathbb S}\else{$\mathbb S$}\fi} %real numbers
\def\DD{\ifmmode{\mathbb D}\else{$\mathbb D$}\fi} %real numbers
\newcommand{\del}{\partial}
\newcommand{\Cont}{{\operatorname{Cont}}}
\newcommand{\ben}{\begin{enumerate}}
\newcommand{\een}{\end{enumerate}}
\newcommand{\be}{\begin{equation}}
\newcommand{\ee}{\end{equation}}
\newcommand{\bea}{\begin{eqnarray}}
\newcommand{\eea}{\end{eqnarray}}
\newcommand{\beastar}{\begin{eqnarray*}}
\newcommand{\eeastar}{\end{eqnarray*}}
\theoremstyle{theorem}
\newtheorem{thm}{Theorem}[section]
\newtheorem{cor}[thm]{Corollary}
\newtheorem{lem}[thm]{Lemma}
\newtheorem{prop}[thm]{Proposition}
\theoremstyle{definition}
\newtheorem{defn}[thm]{Definition}
\newtheorem{rem}[thm]{Remark}
\newtheorem{exm}[thm]{Example}
\newtheorem*{thm*}{Theorem}
\numberwithin{equation}{section}
\def\R{{\mathbb R}}
\def\E{{\mathbb E}}
\def\Z{{\mathbb Z}}
\def\C{{\mathbb C}}
\def\R{{\mathbb R}}
\def\P{{\mathbb P}}
\def\N{{\mathbb N}}
\def\11{{\mathbb I}}
\def\delbar{{\overline \partial}}
\def\id{\text{\rm id}}
\def\V{\mathbb{V}}
\def\C{\mathbb{C}}
\def\Z{\mathbb{Z}}
\def\T{\mathbb{T}}
\def\Q{\mathbb{Q}}
\def\E{\ifmmode{\mathbb E}\else{$\mathbb E$}\fi} %natural numbers
\def\N{\ifmmode{\mathbb N}\else{$\mathbb N$}\fi} %natural numbers
\def\R{\ifmmode{\mathbb R}\else{$\mathbb R$}\fi} %real numbers
\def\Q{\ifmmode{\mathbb Q}\else{$\mathbb Q$}\fi} %rational numbers
\def\C{\ifmmode{\mathbb C}\else{$\mathbb C$}\fi} %complex numbers
\def\H{\ifmmode{\mathbb H}\else{$\mathbb H$}\fi} %complex numbers
\def\Z{\ifmmode{\mathbb Z}\else{$\mathbb Z$}\fi} %integers
\def\P{\ifmmode{\mathbb P}\else{$\mathbb P$}\fi} %real numbers
\def\SS{\ifmmode{\mathbb S}\else{$\mathbb S$}\fi} %real numbers
\def\DD{\ifmmode{\mathbb D}\else{$\mathbb D$}\fi} %real numbers
\def\R{{\mathbb R}}
\def\E{{\mathbb E}}
\def\Z{{\mathbb Z}}
\def\C{{\mathbb C}}
\def\R{{\mathbb R}}
\def\N{{\mathbb N}}
\def\delbar{{\overline \partial}}
\def\CE{{\mathcal E}}
\def\CF{{\mathcal F}}
\def\CI{{\mathcal I}}
\def\CJ{{\mathcal J}}
\def\CL{{\mathcal L}}
\def\CM{{\mathcal M}}
\def\CN{{\mathcal N}}
\def\CP{{\mathcal P}}
\def\CP{{\mathcal P}}
\def\darr#1{\raise1.5ex\hbox{$\leftrightarrow$}
\mkern-16.5mu #1}
\def\roughly#1{\raise.3ex\hbox{$#1$\kern-.75em
\lower1ex\hbox{$\sim$}}}
\def\opname#1{\mathop{\kern0pt{\rm #1}}\nolimits}
\def\dim{\opname{dim}}
\def\vol{\opname{vol}}
\def\rank{\opname{rank}}
\def\supp{\operatorname{supp}}
\def\coker{\operatorname{Coker}}
\def\span{\operatorname{span}}
\def\Cont{\operatorname{Cont}}
\def\Spec{\operatorname{Spec}}
\def\Diff{\operatorname{Diff}}
\def\Image{\operatorname{Image}}
\def\ev{\operatorname{ev}}
\def\Ev{\operatorname{Ev}}
\def\Symp{\operatorname{Symp}}
\def\Gr{\operatorname{Gr}}
\DeclareFontFamily{U}{MnSymbolC}{}
\DeclareSymbolFont{MnSyC}{U}{MnSymbolC}{m}{n}
\DeclareFontShape{U}{MnSymbolC}{m}{n}{
    <-6>  MnSymbolC5
   <6-7>  MnSymbolC6
   <7-8>  MnSymbolC7
   <8-9>  MnSymbolC8
   <9-10> MnSymbolC9
  <10-12> MnSymbolC10
  <12->   MnSymbolC12}{}
\DeclareMathSymbol{\intprod}{\mathbin}{MnSyC}{'270}
\begin{document}

\quad \vskip1.375truein

\title[Generic Reeb foliations]{Leafwise de Rham cohomology of generic Reeb foliations}

\author{Yong-Geun Oh}
\address{Center for Geometry and Physics, Institute for Basic Science (IBS),
 79 Jigok-ro 127beon-gil, Nam-gu, Pohang, Gyeongbuk, KOREA 37673
\& POSTECH, Gyeongbuk, Korea}
\email{yongoh1@postech.ac.kr}

\thanks{The present research is supported by the IBS project \# IBS-R003-D1}

\date{April 2025, revised in July 2026}

\begin{abstract} In this paper, we prove that there exists a residual subset of contact forms $\lambda$
(if any) on any compact connected orientable manifold $M$ for which the foliation de Rham cohomology of the associated Reeb foliation has $ H^0(\CF_\lambda) \cong \R$.
We also prove the same triviality for a generic choice of contact forms with \emph{fixed}
contact structure $\xi$.  This vanishing result of $H^0(\CF_\lambda)$
is also equivalent to the statement that
the Lie algebra of the group of \emph{strict} contactomorphisms is isomorphic to
the span of Reeb vector fields, and so isomorphic to the 1 dimensional abelian  Lie algebra $\R$.
On the other hand, we derive the rank of $H^1(\CF_\lambda)$ 
is infinite \emph{whenever $\lambda$ admits a closed Reeb oribt, i.e., whenever Weinstein's 
conjecture holds}. In particular we prove that $H^1(\CF_\lambda)$ is infinite dimensional 
for all contact form $\lambda$ in dimension 3, thanks to Taubes' proof of 3-dimensional Weinstein's conjecture.
\end{abstract}
 
\keywords{contact form, contact pair, autonomous Hamiltonian, conformal exponent function,    
(co)homological equation, Reeb foliations,  non-projectible contact forms}

\subjclass[2020]{Primary 53D10; Secondary 37C86}
\maketitle

\tableofcontents

\section{Introduction}

Not every odd dimensional orientable smooth compact manifold admits 
a maximally nondegenerate one-form called  a contact form.
The odd dimensional Stiefel-Whitney classes have to vanish for such a manifold \cite{gray}.
(All orientable 3 dimensional manifold admits a contact structure 
\cite{lutz}, \cite{martinet}, \cite{thurston-winkelnkemper}, though.)
We call a manifold \emph{contactable} if it admits a contact form. 
  
Let $M$ be an odd dimensional compact connected contactable manifold without boundary,
and denote by $\mathfrak{C}(M) \subset \Omega^1(M)$
the set of contact forms (not specified by a given contact structure), i.e., 
\be\label{eq:contact-form-set}
\mathfrak{C}(M) = \{\lambda \in \Omega^1(M) \mid d\lambda \, 
\text{\rm is nondegenerate on $\ker \lambda$}\}
\ee
For any $\lambda \in \mathfrak{C}(M)$,  the distribution $\xi = \ker \lambda$ defines a 
contact structure that is coorientable. Conversely any coorientable
contact manifold $(M,\xi)$ is defined in this way. 
When we are given a contact manifold $(M,\xi)$, we consider the subset of $\mathfrak{C}(M)$
defined by
\be\label{eq:contactform-xi}
\mathfrak{C}(M,\xi): = \{ \lambda \in \mathfrak{C}(M) \mid \ker \lambda = \xi\}.
\ee
For the convenience of presentation, we will call $\mathfrak{C}(M)$ the \emph{big phase space}
and $\mathfrak{C}(M,\xi)$ the \emph{small phase space} adopting the terminologies from \cite{do-oh:reduction}.

A choice of contact form $\lambda$ determines a unique (pointwise) decomposition
\be\label{eq:T*M-decomposition}
T^*M =  \{V^\pi \intprod d\lambda \mid V^\pi \in \xi\} \oplus \R \{\lambda\}
\ee
which induces the splitting
\be\label{eq:TM-decomposition}
TM  = (\R\{\lambda\})^\perp \oplus
 \{V^\pi \intprod d\lambda \mid V^\pi \in \xi\}^\perp = \ker \lambda \oplus \R\{R_\lambda\} 
 \ee
where $R_\lambda$ is the Reeb vector field uniquely determined by 
\be\label{eq:defining-Reeb}
R_\lambda \rfloor \lambda = 1, \quad R_\lambda \rfloor d\lambda = 0.
\ee
(Here we denote by $(\cdot)^\perp$ the null-space of $(\cdot)$.)
Another natural consequence of the presence of a contact form $\lambda$ is 
a volume form $\mu_\lambda = \lambda \wedge (d\lambda)^n$ and its associated
measure on $M$, which we also denote by $\mu_\lambda$ with a slight
abuse of notation. We call it the \emph{contact Liouville measure} associated to $\lambda$.

We now borrow the following definition from \cite{do-oh:reduction} for the convenience of further exposition.

\begin{defn}[Contact pair] We call  a \emph{contact pair} any pair $(\lambda,\psi)$
of  a contact form  $\lambda$ and a diffeomorphism $\psi$ of $M$ that satisfies
\be\label{eq:contact-pair}
 d\psi (\xi) \subset \xi
 \ee
 for the kernel $\xi = \ker \lambda$ of $\lambda$.
We say it is positive (resp. negative) if $\psi^*\lambda = f \lambda$ with $f > 0$ (resp. $f < 0$).
\end{defn}
For any positive contact pair $(\lambda,\psi)$, we can write $\psi^*\lambda = e^g \lambda$
by putting.  We call $f$ the \emph{conformal factor}of $(\lambda,\psi)$, and
its logarithm  $g : = \log f$ the  \emph{conformal exponent}. To make the dependence on $(\lambda,\psi)$
explicit, we write
$$
g = g_{(\psi;\lambda)}.
$$
When $\lambda$ is fixed, we call it just the $\lambda$-conformal exponent of $\psi$ and just write $g_\psi$
omitting $\lambda$ from notation. (This is the reason why we write $\psi$ first by switching the locations 
of $(\lambda,\psi)$ for the notation $g_{(\psi;\lambda)}$.)

\subsection{Strict contactomorphisms}

A strict contactomorphism $\psi$ satisfies the equation 
\be\label{eq:strict-contact} 
\psi^*\lambda = \lambda.
\ee
It forms a Lie group \cite{casals-spacil}, \cite{oh-savelyev:strict-contact}, which we denote by 
$\Cont^{\text{\rm st}}(M,\lambda)$. We denote by
$$
\mathfrak{X}^{\text{\rm st}}(M,\lambda)
$$
the associated Lie algebra. The defining equation \eqref{eq:strict-contact} not only requires 
a strict contactomorphism $\psi$ to satisfy
\be\label{eq:contact}
d\psi(\xi) = \xi
\ee
for the associated contact distribution $\xi: =\ker \lambda$, which is the definition of 
a general contactomorphism, but also to preserve the associated
Reeb vector field $\psi_*R_\lambda = R_\lambda$. This `overdeterminedness' is the main reason why
a strict contact diffeomorphism is hard to construct and has been scarce beyond the universally 
present Reeb flows $\phi_{R_\lambda}^t$.

\begin{defn}[Strict contact pair] Let $\lambda$ be a contact form and $\psi \in \Diff(M)$.
We call a pair $(\lambda,\psi)$  a \emph{strict contact pair} if
it satisfies the equation $\psi^*\lambda = \lambda$.
\end{defn} 
For any strict contact pair $(\lambda,\psi)$, we have $g_{(\lambda;\psi)} \equiv 0$ and hence
the discriminant is highly degenerate for any strict contact pair in that  its zero-set  
becomes a full space $M$ instead of a hypersurface.
\begin{rem}
Recall that for any non-strict contact pair 
the associated conformal exponent cannot be constan
and hence its zero set cannot be a full space.
This suggests that appearance of a strict contactomorphism $\psi$ is not a generic
phenomenon under the choice of $\lambda$.  In a companion paper \cite{oh-savelyev:strict-contact} jointed with Y. Savelyev,
we will show that this is indeed the case.
\end{rem}

Now we consider the Lie algebra version of the above discussion. 
Similarly as the defining equation of the Reeb vector field \eqref{eq:defining-Reeb}, which corresponds to
$H = -1$, we can associate the contact vector field $X$
uniquely determined by the equation 
 \be\label{eq:XlambdaH}
\begin{cases}
X \intprod \lambda = -H, \\
X \intprod d\lambda = dH - R_\lambda[H] \lambda.
\end{cases}
\ee
Here we denote the Lie derivative $R_\lambda[H] = \CL_{R_\lambda}H$ as usual.
We denote this $X$ by $X_H$ and call the \emph{contact Hamiltonian vector field} of $H$, and denote its flow 
(with $\psi_H^0 = \id$) by
$\psi_H^t$ in general for a (time-dependent) Hamiltonian $H = H(t,x)$.
The bracket on $C^\infty(M,\R)$ defined by
\be\label{eq:bracket-HG}
\{H,G\}: = \lambda([X_H,X_G]),
\ee
often called the \emph{Jacobi-Lagrange bracket},  defines a Lie-algebra structure on $C^\infty(M,\R)$
and induces the canonical  Lie algebra isomorphism defined by
$$
\mathfrak{X}(M,\lambda) \cong C^\infty(M,\R); \quad X \mapsto  - \lambda(X).
$$
(See \cite{arnold:book}, \cite{boyer}, \cite{LOTV}, \cite{dMV} for the definition of Jacobi-Lagrange bracket
under various sign conventions. We follow that of \cite[Proposition 9]{dMV}.)

We consider a pair $(\lambda, H)$ of a contact form $\lambda$ and an autonomous function $H$
such that the whole flow $\psi_{(H;\lambda)}^t$, which we often just write $\psi_H^t$ suppressing $\lambda$-dependence
for the simplicity of notations,
 is $\lambda$-strict contact, or equivalently that  the pair satisfies
\be\label{eq:pair-lambdaH}
g_{(\psi_H^t;\lambda)} \equiv 0 \quad \forall t \in \R.
\ee
By differentiating this equation in time $t$, we  obtain the following 
description of $\mathfrak{X}^{\text{\rm st}}(M,\lambda)$, which is well-known to the
experts. (See \cite{casals-spacil} for example.)
\begin{lem}\label{lem:strict-contact-H} 
The Hamiltonian flow $\psi_H^t$ is a strict contact flow, i.e., \eqref{eq:pair-lambdaH} holds
if and only if $R_\lambda[H] = 0$. In particular,
\be\label{eq:cont-st}
\mathfrak{X}^{\text{\rm st}}(M,\lambda) = \{X_H \mid R_\lambda[H] = 0, \ H \in C^\infty(M)\}.
\ee
\end{lem}
(The first statement of this lemma is an immediate consequence of the explicit formula \eqref{eq:dgdt}
for the time derivative of $g_{(\psi_H^t;\lambda)}$.)

\subsection{Statement of main theorems and cohomological equation}

Notice that any function $H$ satisfying $R_\lambda[H| = 0$ is constant along the Reeb trajectories
and so every such a function $H$ on $M$ is projectible to the leaf space
$\CN_\lambda$ consisting of Reeb trajectories. We would like to mention that $\CN_\lambda$
is rarely Hausdorff, which turns out to be a generic phenomenon by the main theorem of
the present paper, Theorem \ref{thm:nonprojectable-intro} below.

The deformation problem of strict contactomorphisms for a given contact $\lambda$ is 
closely tied to the dynamics of the Reeb flows which is also 
connected to the topology and dynamics of this \emph{Reeb foliation} denoted by $\CF_\lambda$. 

The following is the first main theorem of the present paper.  The characterization of the 
Lie algebra $\mathfrak{X}^{\text{\rm st}}(M,\lambda)$ given in Lemma \ref{lem:strict-contact-H}
 motivates us to carefully
study the cohomological equation
\be\label{eq:Rlambda[f]=g}
R_\lambda[f] = u.
\ee
(We would like mention that such a \emph{cohomological equation} $X[f] = u$ 
in Dynamical Systems has been much studied
in relation to the \emph{area-preserving dynamics}
on surfaces. (See \cite{forni97,forni02,forni21,forni22},
 for example. See also \cite{arnold:geometrical} in general dimension, where the same equation 
 is called the \emph{homological  equation} instead.)

\begin{thm}\label{thm:nonprojectable-intro} 
Assume $M$ is connected. There exists a residual subset 
 $$
 \mathfrak{C}^{\text{\rm np}} (M) \subset \mathfrak{C}(M)
 $$
consisting of contact forms $\lambda$ such that the only solutions for the
(co)homological equation $R_\lambda[f] = 0$ are constant functions.
\end{thm}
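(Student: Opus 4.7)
The plan is as follows. Equip $\mathfrak{C}(M) \subset \Omega^1(M)$ with the Fr\'echet $C^\infty$ topology, under which $\mathfrak{C}(M)$ is an open subset of $\Omega^1(M)$ and hence a Baire space. Since $R_\lambda[H] = 0$ is a linear equation whose kernel always contains constants, the condition $\ker R_\lambda = \R$ is equivalent to the nonexistence of a nonconstant smooth $H$ with $R_\lambda[H] = 0$. After normalizing (subtract the $\mu$-mean and rescale in $L^2(\mu)$ for a fixed auxiliary Riemannian volume $\mu$), the statement reformulates as the nonexistence of $H \in C^\infty(M)$ with $\int_M H\, d\mu = 0$, $\|H\|_{L^2(\mu)} = 1$, and $R_\lambda[H] = 0$.

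With this reformulation in hand, stratify the complement by $C^k$-size:
\[
\mathfrak{C}(M)\setminus \mathfrak{C}^{\text{np}}(M) \;=\; \bigcup_{k\ge 2} B_k,
\]
where $B_k = \bigl\{\lambda \in \mathfrak{C}(M) : \exists H \in C^k(M),\ \textstyle\int_M H\, d\mu = 0,\ \|H\|_{L^2} = 1,\ \|H\|_{C^k} \le k,\ R_\lambda[H] = 0\bigr\}$. Each $B_k$ is closed by Arzel\`a--Ascoli: if $\lambda_n \to \lambda_0$ in $C^\infty$ and $H_n$ witnesses $\lambda_n \in B_k$, then $\{H_n\}$ is $C^k$-bounded and hence $C^{k-1}$-precompact; a $C^{k-1}$-convergent subsequence $H_n \to H$ carries all normalization bounds to $H$ (with the $C^k$-bound passing by lower semicontinuity), and continuity in $R_{\lambda_n}[H_n] = 0$ yields $R_{\lambda_0}[H] = 0$. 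Thus it suffices to show each $B_k$ has empty interior, whereupon $\mathfrak{C}^{\text{np}}(M) = \bigcap_k(\mathfrak{C}(M)\setminus B_k)$ is residual by Baire.

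The empty-interior step hinges on the first-order variation of the Reeb vector field. Writing $\lambda_s = \lambda_0 + s\alpha$ and $R_{\lambda_s} = R_{\lambda_0} + sV_\alpha + O(s^2)$, differentiation of the defining relations $\lambda_s(R_{\lambda_s}) \equiv 1$ and $d\lambda_s(R_{\lambda_s},\cdot) \equiv 0$ yields
\[
\lambda_0(V_\alpha) = -\alpha(R_{\lambda_0}),\qquad d\lambda_0(V_\alpha,\cdot)|_\xi = -d\alpha(R_{\lambda_0},\cdot)|_\xi,
\]
so $V_\alpha$ is uniquely determined by $\alpha$, and nondegeneracy of $d\lambda_0|_\xi$ combined with localized choices such as $\alpha = f\lambda_0$ for $f$ a bump function shows that pointwise at any $x_0 \in M$ the assignment $\alpha \mapsto (V_\alpha)_\xi(x_0)$ sweeps out all of $\xi_{x_0}$. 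Given any nonconstant smooth invariant $H_0$ of $R_{\lambda_0}$, the identity $dH_0(R_{\lambda_0}) = 0$ forces $dH_0|_\xi \neq 0$ wherever $dH_0 \neq 0$ (otherwise $dH_0 \in \R\lambda_0$, which would give $dH_0(R_{\lambda_0}) \neq 0$); choosing $\alpha$ so that $V_\alpha[H_0](x_0) = \langle dH_0|_\xi,\,(V_\alpha)_\xi\rangle(x_0) \neq 0$ at such a point destroys $H_0$ as an invariant of $R_{\lambda_s}$ to first order in $s$.

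The main obstacle, as I foresee it, is upgrading this single-$H_0$ destruction to a perturbation that kills \emph{every} potential invariant at once. If $\lambda_0 \in \operatorname{int}(B_k)$, then every $\lambda_s$ carries some normalized invariant $H_s$, and $C^{k-1}$-compactness extracts subsequential limits $H^* \in \mathcal{H}_0 := \ker R_{\lambda_0} \cap \{\text{normalized}\}$---a set that is in general infinite-dimensional, because invariance is preserved under composition with arbitrary smooth functions. The remedy I envisage is a secondary Baire (or parametric transversality) argument inside the Fr\'echet space $\Omega^1(M)$: for each $H \in \mathcal{H}_0$, the set of $\alpha$ for which $H$ extends to an invariant of $R_{\lambda_s}$---equivalently, for which $V_\alpha[H]$ lies in the image of $R_{\lambda_0}$---is a proper closed subspace, and running over a countable $C^{k-1}$-dense subset of the compact set $\mathcal{H}_0$ together with the continuity of $H \mapsto V_\alpha[H]$ should isolate a residual family of $\alpha$ for which $\lambda_0 + s\alpha \notin B_k$ at small $s > 0$. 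Executing this auxiliary genericity step---in particular the continuity upgrade from a dense countable subset to all of $\mathcal{H}_0$, and ruling out higher-order obstructions---is the technical core of the proof.
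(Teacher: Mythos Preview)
Your strategy is natural, but the empty-interior step has a genuine gap that your sketch does not close, and it is precisely the difficulty the paper's machinery is built to circumvent. Suppose $\lambda_0 \in \operatorname{int}(B_k)$; for each small $s$ let $H_s$ witness $\lambda_0 + s\alpha \in B_k$, with $H_s \to H^* \in \mathcal{H}_0$ subsequentially. To impose on $H^*$ the first-order constraint $V_\alpha[H^*] \in \operatorname{Image}(R_{\lambda_0})$ you would need $s \mapsto H_s$ differentiable at $0$, which Arzel\`a--Ascoli does not provide: from $R_{\lambda_0}[H_s] = -sV_\alpha[H_s] + O(s^2)$ one obtains only $R_{\lambda_0}[(H_s-H^*)/s] \to -V_\alpha[H^*]$ with $(H_s-H^*)/s$ a priori unbounded, hence only the closure condition $V_\alpha[H^*] \perp_{L^2(\mu_{\lambda_0})} \ker R_{\lambda_0}$. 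It is then far from clear that a single $\alpha$ violates this for \emph{every} $H^* \in \mathcal{H}_0$ --- in the prequantization example $\ker R_{\lambda_0} \cong C^\infty(\mathbb{CP}^n)$ this amounts to asking that an induced operator on the base have trivial kernel, a statement of the same type as the theorem itself. Your countable-dense-plus-continuity patch also fails because $\operatorname{Image}(R_{\lambda_0})$ is not closed, so the constraint set is not closed in $H$. (There is also a minor issue with the closedness of $B_k$: $C^{k-1}$-limits of $C^k$-bounded sequences land only in $C^{k-1,1}$, so redefine $B_k$ accordingly.)

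The underlying obstruction is that $R_\lambda: C^\infty(M) \to C^\infty(M)$ is not Fredholm, so no direct Sard--Smale argument on $(\lambda,H) \mapsto R_\lambda[H]$ is available. The paper avoids this by an entirely different mechanism: instead of perturbing $\lambda$ to kill invariants, it passes to the \emph{characteristic ODE} $\dot\gamma = R_\lambda(\gamma)$, whose $k$-pointed moduli space \emph{is} Fredholm over $\mathfrak{C}(M)$. The geometric link is that $R_\lambda[H]=0$ forces the Reeb flow to preserve the discriminant $\Sigma_{(H;\lambda)} = g_{(\psi_H^1;\lambda)}^{-1}(0)$, so every Reeb trajectory issued from its regular part stays tangent to it at all marked times. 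A $1$-jet evaluation transversality together with an index count shows this constrained moduli space is generically empty once $k \ge 2n+2$, contradicting the existence of a nonconstant invariant. The residual set is then a countable intersection over a dense family $\{H_\ell\}$ of Hamiltonians --- a Baire argument over $H$, not over the possibly infinite-dimensional set $\mathcal{H}_0$ of invariants of a fixed $\lambda_0$, which is what makes it go through.
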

We refer readers to Section \ref{sec:wrap-up},  especially \eqref{eq:CMnp},
for the precise definition of the residual subset
$ \mathfrak{C}^{\text{\rm np}} (M)$. We will call any element of the residual subset \emph{non-projectible}
(to the leaf space). For such a contact form, only constant functions are
projectible to the leaf space as a single-valued function among smooth functions $H$ on $M$.

An immediate corollary is the following triviality theorem of the Lie algebra 
$\mathfrak{X}^{\text{\rm st}}(M,\lambda)$ of strict contactomorphisms.
(We refer readers to \cite{oh-savelyev:strict-contact} for the lifting of this result to the 
description of  Lie group $\Cont^{\text{\rm st}}(M,\lambda)$.)

\begin{cor} Let $\lambda \in \mathfrak{C}^{\text{\rm np}}(M)$. Then
$$
\mathfrak{X}^{\text{\rm st}}(M,\lambda) = \R \langle R_\lambda\rangle  \cong \R.
$$
\end{cor}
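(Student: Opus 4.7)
The plan is straightforward: this corollary is a direct consequence of Theorem \ref{thm:nonprojectable-intro} combined with Lemma \ref{eq:strict-contact-H}. First I would invoke Lemma \ref{eq:strict-contact-H} to write
$$\mathfrak{cont}^{\text{st}}(M,\lambda) = \{X_H \mid R_\lambda[H] = 0,\ H \in C^\infty(M)\}.$$
The hypothesis $\lambda \in \mathfrak{C}^{\text{np}}(M)$ then forces, via Theorem \ref{thm:nonprojectable-intro}, that every such $H$ is a constant function, so the space of generating Hamiltonians collapses to $\R$.

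Next I would compute the contact Hamiltonian vector field $X_c$ attached to a constant $H \equiv c$ and show $X_c = c\, R_\lambda$. Using the two standard characterizing equations $\lambda(X_H) = H$ and $X_H \intprod d\lambda = R_\lambda[H]\,\lambda - dH$, the right-hand sides reduce to $c$ and $0$ respectively; together with the defining conditions $R_\lambda \intprod \lambda = 1$, $R_\lambda \intprod d\lambda = 0$ and the pointwise splitting \eqref{eq:TM-decomposition}, this uniquely determines $X_c = c\, R_\lambda$. The reverse inclusion $\R\{R_\lambda\} \subset \mathfrak{cont}^{\text{st}}(M,\lambda)$ follows from the universal identity $L_{R_\lambda}\lambda = d(R_\lambda \intprod \lambda) + R_\lambda \intprod d\lambda = 0$ via Cartan's formula, which guarantees that the Reeb flow itself is strictly contact.

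Combining these two inclusions gives $\mathfrak{cont}^{\text{st}}(M,\lambda) = \R\{R_\lambda\}$; since this is a one-dimensional, necessarily abelian, Lie subalgebra, it is tautologically isomorphic to $\R$ as a Lie algebra. There is no substantive obstacle at this stage: all the difficulty is absorbed into the proof of Theorem \ref{thm:nonprojectable-intro}, which requires constructing the residual set $\mathfrak{C}^{\text{np}}(M)$ via the generic solvability of $R_\lambda[f] = u$ outlined in the abstract. The only ingredient internal to the corollary is the one-line verification that the contact Hamiltonian assignment $H \mapsto X_H$ sends constants to multiples of $R_\lambda$.
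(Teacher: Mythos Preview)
Your approach is correct and matches the paper's treatment: the paper states this corollary as ``immediate'' from Theorem~\ref{thm:nonprojectable-intro} and Lemma~\ref{eq:strict-contact-H} without further proof, and your argument spells out exactly the inference the paper has in mind. One minor correction: under the paper's sign convention \eqref{eq:XlambdaH} one has $X_H \intprod \lambda = -H$ (not $+H$), so $X_c = -c\,R_\lambda$ rather than $c\,R_\lambda$; this does not affect the conclusion since the span $\R\{R_\lambda\}$ is the same either way.
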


We also have the following \emph{small phase space} counterpart of Theorem \ref{thm:nonprojectable-intro}.

\begin{thm}\label{thm:nonprojectable-small-intro} Let $(M,\xi)$ be a coorientable contact manifold. Then
there exists a residual subset
 $$
 \mathfrak{C}^{\text{\rm np}} (M,\xi) \subset \mathfrak{C}(M,\xi)
 $$
consisting of $\lambda$ such that $\ker \lambda = \xi$ and that
$$
\mathfrak{X}^{\text{\rm st}}(M,\lambda) = \{c R_{\lambda} \mid c \in \R\}.
$$
\end{thm}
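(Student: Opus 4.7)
The plan is to adapt the proof of Theorem \ref{thm:nonprojectable-intro} to the restricted perturbation space $\mathfrak{C}(M,\xi)$. First I would fix a reference contact form $\lambda_0 \in \mathfrak{C}(M,\xi)$ and use the parametrization
$$
\mathfrak{C}(M,\xi) \;\longleftrightarrow\; C^\infty(M,\R), \qquad h \longmapsto e^h \lambda_0,
$$
which identifies $\mathfrak{C}(M,\xi)$ with a Fr\'echet space and thereby equips it with a Baire structure relative to which residuality statements make sense. The target set, which one would call $\mathfrak{C}^{\text{\rm np}}(M,\xi)$, will be defined by the analogue of the formula \eqref{eq:CMnp} referenced in Section \ref{sec:wrap-up}, intersected with $\mathfrak{C}(M,\xi)$.

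Next I would compute the first variation of the Reeb vector field under conformal changes: if $\lambda_s = e^{s\eta} \lambda$ for a variation $\eta \in C^\infty(M)$, then $R_{\lambda_s}$ depends on $\eta$ through $d\eta|_\xi$ and $R_\lambda[\eta]$ in an affine way, and this explicit formula shows that perturbation of $R_\lambda$ in the direction of any prescribed vector field tangent to $\xi$ is achievable by a suitable conformal $\eta$. This is the replacement, inside the small phase space, for the arbitrary one-form perturbations used in the big phase space version, and it is the point at which the restricted nature of the perturbation class must be reconciled with the transversality input.

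Granted this, the Baire/Sard--Smale style scheme of Theorem \ref{thm:nonprojectable-intro} should transport directly. One stratifies the ``bad'' complement by the existence of a non-trivial $H \in \ker R_\lambda$ with prescribed regularity and sup-norm bounds, shows each stratum is nowhere dense by exhibiting a conformal perturbation $\eta$ for which $R_{e^{s\eta}\lambda}[H] \not\equiv 0$ to first order in $s$, and concludes that the intersection over a countable exhaustion is residual. Combined with Lemma \ref{eq:strict-contact-H} and Proposition \ref{prop:L2-adjoint}, this yields $\ker R_\lambda = \{H \mid dH = 0\}$, which collapses to $\R$ on each connected component.

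The main obstacle, as indicated above, will be the very first point of departure from the big phase space setting: since $T_\lambda \mathfrak{C}(M,\xi) = \{h\lambda \mid h \in C^\infty(M)\}$ is a proper subspace of $T_\lambda \mathfrak{C}(M) = \Omega^1(M)$, one cannot simply invoke the big phase space result and restrict, because a residual subset of $\mathfrak{C}(M)$ need not meet $\mathfrak{C}(M,\xi)$ in a residual subset. The technical core is therefore to verify that conformal perturbations alone already destroy any putative non-constant $R_\lambda$-invariant function $H$, i.e.\ that the linearized constraint ``$R_{\lambda_s}[H] = 0$ to first order in $s$'' imposes a non-trivial linear condition on $\eta$ whenever $H$ is not locally constant. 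Once that infinitesimal surjectivity is established, the countable Baire argument goes through essentially verbatim as in the proof of Theorem \ref{thm:nonprojectable-intro}.
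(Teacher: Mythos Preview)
Your identification of the key issue---that one must check conformal variations $\alpha = h\lambda$ alone suffice for the transversality inputs---is exactly right, and the paper's argument turns on precisely this point. However, the mechanism you describe does not match the paper's and, taken literally, has a gap.

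You propose to stratify the bad set by bounds on $H$ and then, for each $\lambda$ in a stratum, exhibit $\eta$ with $\delta_s R_{e^{s\eta}\lambda}[H] \not\equiv 0$. The infinitesimal computation indeed works (it reduces to $-d\lambda(X_H^\pi, X_\eta^\pi)$, which is nonzero for suitable $\eta$ whenever $dH\neq 0$). But this only shows that for a \emph{fixed} $H$ the condition $R_\lambda[H]=0$ is unstable under conformal perturbation; it does not show that the condition ``$\exists$ nonconstant $H$ with $R_\lambda[H]=0$'' is unstable. As $\lambda$ moves, the offending $H$ may move with it, and your stratification by norm bounds on $H$ does not prevent this. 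More fundamentally, the map $(\lambda,H)\mapsto R_\lambda[H]$ is not Fredholm because $R_\lambda$ is not elliptic---this is exactly the obstruction the paper flags in Section~\ref{sec:strategy} and Remark~\ref{rem:characteristic}, and it is why the big phase space proof does \emph{not} proceed by directly perturbing $R_\lambda[H]$.

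What the paper actually does, in both phase spaces, is pass to a dynamical consequence of $R_\lambda[H]=0$: every Reeb trajectory issued in the discriminant $\Sigma_{(H;\lambda)}$ stays there. This is encoded by a \emph{finite-dimensional} constrained moduli space $\CM_{H;k}^{\Sigma^{\text{\rm reg}}}(M;\lambda)$ of pointed Reeb trajectories, governed by the elliptic ODE $\dot\gamma = R_\lambda(\gamma)$, so Sard--Smale applies. Emptiness of this moduli space for $k\ge 2n+2$ is an \emph{open} condition in $H$, which is what lets one pass from a countable dense family $\{H_\ell\}$ to all nonconstant $H$. The small phase space adaptation (Part~\ref{part:small}) then consists of checking that the relevant first-variation formulas---$\delta_\lambda R_\lambda$ in Corollary~\ref{cor:Yalpha} and $\delta_\lambda g_{(\psi;\lambda)}$ in Corollary~\ref{cor:deltalambda-Gamma-small}---depend only on the Reeb component $h_\alpha$ of $\alpha$, so restricting to $\alpha = h\lambda$ loses no ampleness. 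With that observed, the mapping and evaluation transversalities and the wrap-up go through verbatim with $\mathfrak{C}(M,\xi)$ in place of $\mathfrak{C}(M)$. Your proposal would be salvaged if you replaced the direct $R_\lambda[H]$ perturbation scheme with this moduli-theoretic one and then carried out the $h_\alpha$-dependence check you allude to.
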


We call any such contact form $\lambda \in  \mathfrak{C}^{\text{\rm np}} (M,\xi)$
\emph{non-projectible in small phase space}. We refer readers to Section \ref{sec:wrapup-small}
for the precise definition of $ \mathfrak{C}^{\text{\rm np}} (M,\xi)$.

\begin{rem}
\begin{enumerate} 
\item Recall the fact the Reeb vector field $R_\lambda$ is nothing but the Hamiltonian vector field $X_{(-1)}$ associated
to the constant function $-1$ \cite{dMV}, \cite{oh:contacton-Legendrian-bdy}.
We have $\lambda([R_\lambda,X_H]) = \{-1,H\}$ by the definition the aforementioned Jacobi-Lagrange bracket $\{\cdot,\cdot \}$. 
On the other hand, a direct calculation gives rise to 
$$
\lambda([R_\lambda,X_H]) = -  d\lambda(R_\lambda, X_H) + R_\lambda[\lambda(X_H)] - X_H[\lambda(R_\lambda)]
= -R_\lambda[H].
$$
Therefore $R_\lambda[H] = 0$
is equivalent to the vanishing of the Jacobi-Lagrange bracket $\{1,H\} = 0 $.
\item The notion of completely integrable contact 
Hamiltonian systems has been discussed in the literature.
More recently, in  relation to a new construction of Sasaki-Einstein metrics 
\cite{boyer}, \cite{visinescu}, in the dynamics of the billiard system 
\cite{khesin-tabachinikov} and even more recently in contact topology
\cite{sun-uljar-vargol}. The definition of contact integrable systems is 
given by requiring the existence of $n$ independent functions $\{f_1, \cdots, f_n\}$
in involution as in the symplectic case, i.e., $\{f_i,f_j\} = 0$
with respect to the \emph{Jacobi-Lagrange bracket} $\{\cdot, \cdot\}$ 
but \emph{with the additional constraint $\{1,f_i\} = 0$ for all $i = 1, \cdots, n$}. 
By the result of the present paper, this latter
requirement turns out to be a very strong restriction which makes the given 
definition of the \emph{contact integrable system} hypothetical \emph{for a generic 
contact form $\lambda$}. It also makes the underlying Reeb flow define a circle action
so that the underlying contact manifold is essentially a prequantization of integral symplectic manifold
(or integral symplectic orbifold), and
the given system is essentially a lifting of a symplectic completely
integrable system defined on this integral symplectic orbifold. 
\end{enumerate}
\end{rem}

\subsection{Generic property of the foliation de Rham cohomology}

The above study of the functional equation $R_\lambda[f] = u$ has the cohomological formulation
whose explanation is now in order.

Each contact form $\lambda$ gives rise to a two-term leafwise de Rham complex 
\be\label{eq:E-deRham-complex}
0 \to \Omega^0(\CF_\lambda) \to \Omega^1(\CF_\lambda) \to 0
\ee
associated to the one dimensional foliation $\CF_\lambda$ of the $\lambda$-Reeb trajectories. 
The only possibly non-zero cohomology groups are
$$
H^0(\CF_\lambda), \quad H^1(\CF_\lambda).
$$
Note that 
\be\label{eq:complex}
\Omega^0(\CF_\lambda) = C^\infty(M), \quad 
\Omega^1(\CF_\lambda) = \{u\, \lambda \mid u \in C^\infty(M)\}.
\ee
The following explicit description of the differential $d_\CF$ is  important for our study of 
the cohomological equation
\be\label{eq:Rlambdaf=u}
R_\lambda[f] = u.
\ee
\begin{prop}  Consider the leafwise differential $d_\CF: \Omega^0(\CF_\lambda) \to \Omega^1(\CF_\lambda)$. Then
\beastar
Z^0(\CF_\lambda) & = & \ker d_\CF =  \{ f\,  \mid R_\lambda[f] = 0\}, \\
B^1(\CF_\lambda) & = & \{u \, \lambda \mid u = R_\lambda[f]\, \text{ \rm for some } \, f \in C^\infty\},\\
Z^1(\CF_\lambda) & = & \{u \, \lambda \mid u \in C^\infty(M)\} \cong \coker d_\CF. 
\eeastar
In particular, we have
\bea\label{eq:H0H1-FF}
H^0(\CF_\lambda) & = & \{f \in C^\infty(M) \mid R_\lambda[f] = 0 \}, \nonumber\\
H^1(\CF_\lambda) & \cong  &  
\frac{\{u \lambda \mid u \in C^\infty(M)\}}{\{R_\lambda[u] \lambda \mid u \in C^\infty(M)]\}}.
\eea
\end{prop}
An immediate corollary of this proposition is the following. We write
\be\label{eq:meanzero-set}
C_{(0;\lambda)}^\infty(M,\R): = \left\{ f \in C^\infty(M,\R) \, \Big| \,  \int_M f\, d\mu_\lambda = 0 \right\}
\ee
\begin{cor}  The following holds:
\begin{enumerate} 
\item $\ker R_\lambda = \{0\}$ on $C_{(0;\lambda)}^\infty(M,\R)$ if and only if $H^0(\CF_\lambda) \cong \R$.
\item $R_\lambda[f] = u$ has a solution for all $u \in C_{(0;\lambda)}^\infty(M,\R)$
if and only if  $H^1(\CF_\lambda) \cong \R$.
\end{enumerate}
\end{cor}

Combining the above discussion with Theorem \ref{thm:nonprojectable-intro}, we can rephrase 
it as the following generic triviality theorem on $H^0(\CF_\lambda)$.
\begin{thm}\label{thm:triviality-theorem} Let $\lambda$ be any non-projectible contact form on $M$. Then 
$$
\dim H^0(\CF_\lambda,M)  = 1.
$$
In particular this holds for any contact form 
$\lambda$ in the residual subset $\mathfrak{C}^{\text{\rm np}}(M) \subset \mathfrak C(M)$.
\end{thm}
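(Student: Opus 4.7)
The plan is to assemble this theorem directly from three ingredients already in place: the explicit identification of $H^0(\CF_\lambda)$ and $H^1(\CF_\lambda)$ as the kernel and cokernel of the first-order operator $R_\lambda$, the defining property of non-projectability, and the skew-adjointness of $R_\lambda$ with respect to $\mu_\lambda$. No further geometric input seems necessary at this stage; the work has been front-loaded into Theorem \ref{thm:nonprojectable-intro} and Proposition \ref{prop:L2-adjoint}.

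First I would handle $H^0$. By the proposition giving \eqref{eq:H0H1-FF}, one has $H^0(\CF_\lambda) \cong \ker R_\lambda$, viewed as the subspace of $f \in C^\infty(M)$ annihilated by the Reeb derivative. The hypothesis that $\lambda$ is non-projectable, in the sense of Theorem \ref{thm:nonprojectable-intro}, is precisely that $\ker R_\lambda$ equals the space of constant functions. Since $M$ is assumed connected, this space is isomorphic to $\R$, so $\dim H^0(\CF_\lambda) = 1$.

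Next I would turn to $H^1$. Reading the foliation complex \eqref{eq:E-deRham-complex}--\eqref{eq:complex}, every section $u\,\lambda$ of $\Omega^1(\CF_\lambda)$ is a cocycle and the coboundaries are exactly $R_\lambda[u]\,\lambda$, so $H^1(\CF_\lambda) \cong C^\infty(M)/R_\lambda(C^\infty(M)) = \coker R_\lambda$. Invoking Proposition \ref{prop:L2-adjoint}, the skew-adjointness of $R_\lambda$ with respect to $\mu_\lambda$ gives the isomorphism $\coker R_\lambda \cong \ker R_\lambda$, which by the first step is isomorphic to $\R$. Hence $\dim H^1(\CF_\lambda) = 1$ as well.

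The final assertion that this applies to every $\lambda \in \mathfrak{C}^{\text{\rm np}}(M)$ is then just unwinding the definition of the residual subset from Theorem \ref{thm:nonprojectable-intro}. The main obstacle is therefore not in Theorem \ref{thm:triviality-theorem} itself but in the two results it rests on: establishing that non-projectability is generic (which is the heart of the paper and requires the transversality / control-theoretic machinery), and justifying the clean cokernel-equals-kernel identification in Proposition \ref{prop:L2-adjoint}, where some care about closed range of the unbounded operator $R_\lambda$ on $L^2(M,\mu_\lambda)$ will be needed to pass between the smooth-category cokernel and the $L^2$ orthogonal complement of $\ker R_\lambda$.
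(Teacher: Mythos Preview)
Your proposal is correct and follows essentially the same route as the paper: identify $H^0(\CF_\lambda)\cong\ker R_\lambda$ and $H^1(\CF_\lambda)\cong\coker R_\lambda$ via \eqref{eq:H0H1-FF}, use non-projectability to get $\ker R_\lambda\cong\R$, and invoke the skew-adjointness of Proposition~\ref{prop:L2-adjoint} to conclude $\coker R_\lambda\cong\ker R_\lambda\cong\R$. The technical point you flag about closed range is exactly what the paper addresses by passing to Floer's $C^\varepsilon$ completion and applying the Open Mapping Theorem (together with hypoellipticity of $R_\lambda$ to return to smooth functions); see Theorem~\ref{thm:solvability} and the paragraph preceding it.
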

\begin{exm}\label{exm:sphere} Consider the standard contact form $\lambda_{\text{\rm st}}$
of $S^{2k+1} \subset \C^{k}$ which is 
a contact manifold as the prequantization of $\C P^n$. It follows from the definition of 
$H^0(\CF_{\lambda_{\text{\rm st}}};S^{2k+1})$
$$
H^0(\CF_{\lambda_{\text{\rm st}}}) = \ker R_{\lambda_{\text{\rm st}}} \cong C^\infty(\C P^n,\R).
$$
Our theorem shows that this is a non-generic phenomenon and the cohomology groups will
become trivial, or more precisely, is  isomorphic to $\R$, 
as in Theorem \ref{thm:nonprojectable-intro} by a $C^\infty$-small perturbation 
of $\lambda_{\text{\rm st}}$.
\end{exm}

On the other hand, we state the following general result on $H^1(\CF_\lambda)$.

\begin{thm}\label{thm:do-intro} Let $(M, \xi)$ be a compact contact manifold and $\lambda$ be its contact form.
Suppose that $\lambda$ admits a closed Reeb orbit. Then $H^1(\CF_\lambda)$ is infinite dimensional.
\end{thm}

\begin{rem} This result is pointed out by Hyun-Seok Do \cite{do-example} during our discussion
on the previous version of the present paper,
which results in correcting the author's  incorrect previous claim that $H^1(\CF_\lambda)$ is also
one-dimensional  for a generic choice of $\lambda$.
In Subsection \ref{subsec:hyunseok}, we will present the proof of the theorem we learned from him.
\end{rem}

By combining this theorem with Taubes' proof \cite{taubes} of Weinstein conjecture on 
3-dimensional contact manifolds, we obtain the following corollary.

\begin{cor} Let $(M,\xi)$ be any compact contact 3-manifold. Then 
$$
\dim H^1(\CF_\lambda,M) = \infty
$$
for all contact forms $\lambda$ of $\xi$. More generally, the same also holds for all higher dimensions, 
\emph{whenever Weinstein's conjecture holds.}
\end{cor}

\subsection{Overall strategy of the proof }
\label{subsec:strategy}

In this subsection, we outline the overall strategy of our proof of the main theorems.

What we need to prove is that any function $H$ satisfying $R_\lambda[H] = 0$ must be constant for any
contact form in the to-be-identified residual subset
$\mathfrak{C}^{\text{\rm np}}(M) \subset \mathfrak{C}(M)$
(resp. $\mathfrak{C}^{\text{\rm np}}(M,\xi) \subset \mathfrak{C}(M,\xi)$).

To prove such a generic statement for $\mathfrak{C}(M)$ 
it is natural to attempt to apply the Sard-Smale theorem in a suitable moduli problem 
associated to each pair $(\lambda, H)$ regarding the contact forms as the parameter space.
In the first try, one might want to directly consider the map $(\lambda,H) \mapsto R_\lambda[H]$,
which however turns out to be of no use, e.g., because the operator $R_\lambda$ is not an 
elliptic operator. 
 
The following observation  is one of  the key 
ingredients used in our proof of the main results of the present paper.

\begin{rem}[Characteristic equation]\label{rem:characteristic}
We would like to point out that the (elliptic) ODE, $\dot x = R_\lambda(x)$, is nothing but
the \emph{characteristic equation} of the  PDE, $R_\lambda[f] = u$. 
(See \cite[Chapter 1, Section 4]{fritzjohn}  for nice exposition on the role of characteristic 
curves for quasi-linear PDEs.)  As mentioned before, we cannot directly 
formulate a Fredholm framework for the application of Sard-Smale theorem to the latter
PDE. The upshot of the scheme of our proof is, however,  that we can formulate one for the
former $ODE$ by regarding the moduli space of the characteristic equation with marked points as 
a \emph{finite dimensional family} of approximate solutions of the PDE
$R_\lambda[f] = 0$ which better and better approximate the solution space of the PDE as $k \to \infty$.
We believe that our kind of practice can be applied to the study of 
other similar-type of homological equations and has independent  interest of its own.
\end{rem}
We mention that $R_\lambda[f] = u$ is always locally solvable
utilizing a Darboux coordinate system in which we have $R_\lambda = \frac{d}{dz}$.
The main question then is whether the equation is \emph{globally solvable}
on compact contact manifolds without boundary.
In fact, an equation of the type $X[f] =g$ for a vector field $X$, such as
$R_\lambda[f] = u$, is commonly called  the \emph{cohomological equation} 
 in Dynamical Systems. The cases with
locally Hamiltonian (or divergence-free) vector field $X$ have been much studied 
in relation to the \emph{area-preserving dynamics} on surfaces. (See \cite{forni97,forni02,forni21,forni22},
 for example.) See also \cite{arnold:geometrical} where the same equation is called the \emph{homological 
 equation} instead.

Our main task is then to identify the correct \emph{moduli problem}
that enables us to achieve the purpose of identifying the
residual subset $\mathfrak{C}^{\text{\rm np}}(M) \subset \mathfrak{C}(M)$ such that
$$
\ker R_\lambda = \{H \in C^\infty(M,\R) \mid dH = 0\}
$$
for all $\lambda \in \mathfrak{C}^{\text{\rm np}}(M)$. We mention that the Reeb flow preserves 
the level sets of $H \in \ker R_\lambda$. Based on this key observation, we will first investigate its contrapositive statement
under the deformation of contact forms $\lambda$,
instead of directly considering the functional equation $R_\lambda[H] = 0$. 

With all these preliminary observations and measures made,
we can relate the main problem of our study 
to the moduli problem of the Reeb trajectories, i.e., the zero set of the map
$$
\gamma \mapsto \dot \gamma - R_\lambda(\gamma).
$$
We denote by
$$
\CM_k^\R(\lambda;M)
$$
the moduli space of pointed Reeb trajectories, i.e., the set of curves
$\gamma$ satisfying the ODE $\dot x = R_\lambda(x)$ with $k$ marked points $0 < t_1 < \cdots < t_k  $.

Then motivated by the practice in the Gromov-Witten-Floer theory, we consider evaluation maps
against the level set $H^{-1}(c)$ of nonempty regular value $c$, which exists as long as 
$H$ is smooth and \emph{non-constant}. We denote $\Sigma_H(c): = H^{-1}(c)$.
For those who are familiar with the fine details \cite{floer:unregularized} of a Fredholm study of the Gromov-Witten theory,
we make the following punch line of the main scheme of our proof at this point by duplicating the table 
of comparison  between the Gromov-Witten theory
and our moduli theory of Reeb trajectories explained in Remark \ref{rem:table}.
(See Section \ref{sec:wrap-up}, especially Remark \ref{rem:table}, for the unexplained 
notations.)

\medskip

\begin{center} \Large
\begin{tabular}{|c||c|} 
\hline
\text{Gromov-Witten theory}  & \text{Moduli theory of Reeb dynamics}  \\
\hline
$u: \Sigma \to M$ &  $H: M \to \R$ \\ \hline 
$u$ somewhere injective & $H$ non-constant \\ \hline
$J \in \CJ(M)$ & $\lambda \in \mathfrak{C}(M)$  or $\mathfrak{C}(M,\xi)$\\ \hline
$\delbar_J$ & $\aleph^{(0)}_{\lambda}$ \\ \hline 
$\delbar_J u$ & $\aleph^{(0)}_{H;\lambda}$ \\ \hline
$\delbar$ & $\aleph^{(0)}$ \\ \hline
$\CM(M,J)$ & $\CM^\R(\lambda;M)$ \\ \hline
\end{tabular}
\end{center}

\medskip

We study the \emph{one-jet constraint} on $\lambda$ Hamiltonian trajectories $\gamma$
\be\label{eq:dotgamma}
\gamma(0) \in \Sigma_H(c), \quad
\dot \gamma(t_i) \in T\Sigma_H(c)\, \, i= 1, \cdots k
\ee
in the study of the moduli space $\CM^\R(\lambda;M)$. The resulting constrained
moduli space is anticipated to have its  dimension $2n+1 -k$ by the general theory of the first order ODE
$\dot x = R_\lambda(x)$ and dimension counting. It follows that each one-jet constraint at a point $t_i$ 
cuts down the dimension of the moduli space by $1$ \emph{provided the constraint is
transverse}, since then the constraint \eqref{eq:dotgamma} is of
codimension $2$. Therefore if we put the constraint \eqref{eq:dotgamma} at the points with
$k \geq 2n+2$, we anticipate that \emph{under the assumption $H$ is not identically zero},
the constrained moduli space would become empty for a generic choice of $\lambda$.

\bigskip

\noindent{\bf Acknowledgement:} We thank Y. Savelyev for the collaboration of \cite{oh-savelyev:strict-contact}.
The present work is a spin-off of our  collaboration.  We also thank Hyuk-Seok Do for the
collaboration of \cite{do-oh:reduction}  which provides  some new insight on the structure of
general contact Hamiltonian dynamics 
that plays an  important role in our proof strategy. We also thank him (and the referee as well)
for his careful reading of the previous version of the paper and pointing out the error in our translation of
the main result into the language of the foliation de Rham cohomology.  Hyun-Seok Do also provides
a counterexample presented in Subsection \ref{subsec:hyunseok}. We sincerely thank the unknown referee
for carefully reading the original manuscript and providing many helpful comments and suggestions.
Our accommodation of their comments has much simplified the exposition and improves readability of the paper.

\bigskip

\noindent{\bf Conventions and Notations:}

\medskip

\begin{itemize}
\item {(Contact Hamiltonian)} We define the contact Hamiltonian of a contact vector field $X$ to be
$- \lambda(X) =: H$. 
\item
For each given time-dependent function $H = H(t,x)$, we denote by $X_H=X_{(H;\lambda)}$ the associated contact Hamiltonian 
vector field whose associated Hamiltonian $- \lambda(X_t)$ is given by $H = H(t,x)$, and its flow by 
$\psi_H^t = \psi_{(H;\lambda)}^t$.
\item {(Reeb vector field)} We denote by $R_\lambda$ the Reeb vector field associated to $\lambda$
and its flow by $\phi_{R_\lambda}^t$. It is the same as $X_H$ with $H \equiv -1$.
\item We decompose and write $X_H = X_H^\pi - H\, R_\lambda$ with respect to the decomposition of
the tangent bundle $TM = \xi \oplus \R \{R_\lambda\}$.
\item $g_{(\psi;\lambda)}$: the conformal exponent defined by 
$\psi^*\lambda = e^{g_{(\psi;\lambda)}} \lambda$
for the contact pair $(\lambda,\psi)$.
\item $\CF_\lambda$; the Reeb foliation, i.e., the foliation of $\lambda$-Reeb trajectories.
\item $\CN_\lambda$; the leaf space of the foliation $\CF_\lambda$, i.e., the quotient space $\CF_\lambda = M/\sim$
equipped with the quotient topology.
\item {(Splitting)} We write $\alpha = V_\alpha^\pi \intprod d\lambda + h_\alpha \, \lambda$ with $h_\alpha: = \lambda(R_\lambda)$
in terms of the splitting $T^*M = (\R \{R_\lambda\})^\perp \oplus \xi^\perp$ dual to the splitting $TM = \xi \oplus \R \{R_\lambda\}$.
\item {(Variation of contact Hamiltonian vector fields)} We denote 
$$
Y_\alpha : = \delta_\lambda(R_\lambda)(\alpha) = \frac{d}{ds}\Big|_{s=0} R_{\lambda_s}, \\
$$for the germ of curves $\{\lambda_s\}_{-\epsilon < s < \epsilon}$
satisfying $ \lambda_0 = \lambda, \, \alpha = \dot \lambda_s|_{s = 0}$.
\item {(Symplectic diffeomorphisms)} We exclusively use $\phi$, never $\psi$, 
to denote a symplectic diffeomorphism whenever it appears.
\end{itemize}

\medskip

\noindent{\bf Terminologies:}
\begin{itemize}
\item We will use the two terms `derivative' and `variation' (or `first variation')
interchangeably as we feel like doing, depending on the circumstances.
\item {(Off-shell versus on-shell)} We  adopt  and freely use physicists'  standard terminologies `off-shell' 
and `on-shell' as follows:
\begin{enumerate}
\item {(Off-shell)} A treatment \emph{without assuming the `equation of motion'  being satisfied}, 
i.e., `on the general
function space', 
\item {(On-shell)} A treatment  \emph{assuming the `equation of motion'}, i.e.,
`on the moduli space of solutions'.
\end{enumerate}
\end{itemize}

\section{Basic contact Hamiltonian geometry and dynamics}
\label{sec:contact-Hamiltonian-geometry}

From the beginning, we would like to emphasize the differences of the contact Hamiltonian 
dynamics from symplectic Hamiltonian dynamics, e.g., the presence of the
\emph{vacuum dynamics} (i.e., that of constant Hamiltonian) of Reeb flow in the former case.
This has many ramifications throughout the whole contact Hamiltonian geometry and
dynamics which deviate from that of symplectic ones. In this section, 
we briefly summarize basic calculus of contact Hamiltonian geometry
partially to set up our notations and sign conventions
following \cite{oh:contacton-Legendrian-bdy}, \cite{dMV}. 

Now let   $(M,\xi)$ be a contact manifold of dimension $m = 2n+1$, which is coorientable.
We denote by $\Cont_+(M,\xi)$ the set of orientation preserving contactomorphisms
and by $\Cont_0(M,\xi)$ its identity component.  
Equip $M$ with a contact form $\lambda$ with $\ker \lambda = \xi$. 
 A choice of contact form also induces a natural measure induced by the $(2n+1)$-form
$\lambda \wedge (d\lambda)^n$.

\subsection{The conformal exponent of a contactomorphism}
\label{subsec:conformal-exponent}

For a given contact form $\lambda$, a coorientation preserving diffeomorphism $\psi$ of $(M,\xi)$
is contact if and only if it satisfies
$$
\psi^*\lambda = e^g \lambda
$$
for some smooth function $g: M \to \R$, which depends on the choice of contact form $\lambda$ of $\xi$.
Unless said otherwise, we will always assume that $\psi$ is coorientation preserving without mentioning
from now on.

\begin{defn} For given coorientation preserving contact diffeomorphism $\psi$ of $(M,\xi)$ we call
the function $g$ appearing in $\psi^*\lambda = e^g \lambda$ the \emph{conformal exponent}
for $\psi$ and denote it by $g=g_{(\psi;\lambda)}$.
\end{defn}

The following lemma is a straightforward consequence
of the identity $(\phi\psi)^*\lambda = \psi^*\phi^*\lambda$.

\begin{lem}\label{lem:coboundary} Let $\lambda$ be given and denote by $g_\psi$ the function $g$
appearing above associated to $\psi$. Then
\begin{enumerate}
\item $g_{\phi\psi} = g_\phi\circ \psi + g_\psi$ for any $\phi, \, \psi \in \Cont(M,\xi)$,
\item $g_{\psi^{-1}} = - g_\psi \circ \psi^{-1}$ for all $\psi \in \Cont(M,\xi)$.
\end{enumerate}
\end{lem}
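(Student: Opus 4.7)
The plan is to derive both identities by a direct pullback computation, using only the defining relation $\psi^*\lambda = e^{g_{(\psi;\lambda)}}\lambda$ and the functoriality of pullback, exactly as the parenthetical hint $(\phi\psi)^*\lambda = \psi^*\phi^*\lambda$ in the statement suggests. No additional geometry is needed; the result is essentially a cocycle identity built into the definition of $g_{(\psi;\lambda)}$.

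For part (1), I would expand $(\phi\psi)^*\lambda$ in two ways. On one hand it equals $e^{g_{(\phi\psi;\lambda)}}\lambda$ by definition. On the other hand, using contravariance of pullback and the defining relation applied twice,
\[
(\phi\psi)^*\lambda = \psi^*(\phi^*\lambda) = \psi^*\bigl(e^{g_{(\phi;\lambda)}}\lambda\bigr) = \bigl(e^{g_{(\phi;\lambda)}}\circ\psi\bigr)\,\psi^*\lambda = e^{g_{(\phi;\lambda)}\circ\psi}\,e^{g_{(\psi;\lambda)}}\,\lambda.
\]
Here the key (trivial) point is that $\psi^*(f\omega) = (f\circ\psi)\,\psi^*\omega$ for a smooth function $f$ and any differential form $\omega$. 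Comparing exponents (using that $\lambda$ is nowhere vanishing, so the coefficient is uniquely determined pointwise) yields
\[
g_{(\phi\psi;\lambda)} = g_{(\phi;\lambda)}\circ\psi + g_{(\psi;\lambda)}.
\]

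For part (2), I would specialize part (1) by taking $\phi = \psi^{-1}$. Since $\psi^{-1}\psi = \id$ and pullback by the identity gives $\id^*\lambda = \lambda$, one has $g_{(\id;\lambda)} \equiv 0$. Substituting into the cocycle identity gives $0 = g_{(\psi^{-1};\lambda)}\circ\psi + g_{(\psi;\lambda)}$, and pre-composing with $\psi^{-1}$ yields $g_{(\psi^{-1};\lambda)} = -g_{(\psi;\lambda)}\circ\psi^{-1}$, as claimed.

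There is essentially no obstacle here: the only thing to be careful about is the order of composition in the pullback (it is tempting to write the wrong side of $\psi$ in the composition), and the verification that $\lambda$ nowhere vanishing forces equality of the scalar coefficients pointwise. Both points are routine, so the lemma is a one-line consequence of functoriality of pullback once set up correctly.
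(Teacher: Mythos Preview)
Your proof is correct and follows exactly the approach the paper indicates: the paper omits the proof entirely, noting only that the lemma is ``a straightforward consequence of the identity $(\phi\psi)^*\lambda = \psi^*\phi^*\lambda$,'' which is precisely the computation you carry out.
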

The following  general derivative formula,  applied to \emph{autonomous Hamiltonian} in the 
present paper, plays an important role in our proof.

\begin{lem}\label{lem:dgdt} For any time-dependent Hamiltonian $H = H(t,x)$, we have
\be\label{eq:dgdt}
\frac{\del g_{(\psi_H^t;\lambda)}}{\del t} = -R_\lambda[H_t] \circ \psi_H^t.
\ee
\end{lem}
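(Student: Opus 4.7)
The plan is to differentiate the defining relation $(\psi_H^t)^*\lambda = e^{g_{(\psi_H^t;\lambda)}}\lambda$ in $t$ and match coefficients of $\lambda$ on both sides. Writing $g_t := g_{(\psi_H^t;\lambda)}$ for brevity, the right-hand side simply gives $\frac{\partial g_t}{\partial t} e^{g_t}\lambda$. The left-hand side equals $(\psi_H^t)^*\mathcal{L}_{X_{H_t}}\lambda$ by the standard formula for the $t$-derivative of a pullback along a (time-dependent) flow. So everything reduces to computing $\mathcal{L}_{X_{H_t}}\lambda$ in closed form.

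For this I would invoke Cartan's magic formula together with the two defining identities of the contact Hamiltonian vector field built into the paper's sign conventions, namely $\lambda(X_H) = -H$ and $X_H \intprod d\lambda = dH - R_\lambda[H]\,\lambda$. The second identity follows quickly from the splitting $X_H = X_H^\pi - H R_\lambda$: on $\xi$ it encodes $d\lambda(X_H^\pi,\cdot) = dH|_\xi$, and on $R_\lambda$ both sides vanish. Cartan then yields
\[
\mathcal{L}_{X_{H_t}}\lambda \;=\; d(-H_t) + dH_t - R_\lambda[H_t]\,\lambda \;=\; -R_\lambda[H_t]\,\lambda.
\]

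Pulling this back along $\psi_H^t$ and using $(\psi_H^t)^*\lambda = e^{g_t}\lambda$, I obtain
\[
\frac{\partial}{\partial t}(\psi_H^t)^*\lambda \;=\; -\bigl(R_\lambda[H_t]\circ \psi_H^t\bigr)\,e^{g_t}\,\lambda.
\]
Equating this with $\frac{\partial g_t}{\partial t} e^{g_t}\lambda$ and cancelling the nowhere-vanishing factor $e^{g_t}\lambda$ gives the claimed identity
\[
\frac{\partial g_{(\psi_H^t;\lambda)}}{\partial t} \;=\; -R_\lambda[H_t]\circ \psi_H^t.
\]

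There is really no hard step; the only subtlety is bookkeeping for the sign convention $-\lambda(X_H) = H$ adopted in the paper, which is what forces the cancellation $d(-H_t) + dH_t = 0$ and leaves the clean answer $\mathcal{L}_{X_{H_t}}\lambda = -R_\lambda[H_t]\,\lambda$. If the opposite sign convention were used one would instead pick up an extra $2\,dH_t$ term, so it is worth stating the identity $\mathcal{L}_{X_H}\lambda = -R_\lambda[H]\,\lambda$ explicitly as a preliminary step before pulling back.
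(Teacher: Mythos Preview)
Your proof is correct and follows essentially the same approach as the paper: differentiate the defining relation $(\psi_H^t)^*\lambda = e^{g_t}\lambda$, use Cartan's formula to identify $\mathcal{L}_{X_{H_t}}\lambda$, and read off the coefficient of $\lambda$. The only cosmetic difference is that you compute $\mathcal{L}_{X_{H_t}}\lambda = -R_\lambda[H_t]\,\lambda$ directly from the defining identities of $X_H$ (this is the paper's equation \eqref{eq:CLXH=RHlambda}) and then pull back, whereas the paper pulls back first and then evaluates against $R_\lambda$ to isolate the scalar; the content is identical.
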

\begin{proof} Since $\lambda$ is fixed, for the simplicity of notation, we write $X_{(H_t;\lambda)} = :X_t$
and $g_{(\psi_H^t;\lambda)} = :g_t$. Then by definition, we have
$$
(\psi_H^t)^*\lambda = e^{g_t} \lambda.
$$
By differentiating this equation in $t$, we derive
$$
(\psi_H^t)^*\CL_{X_t}\lambda =  \frac{\del g_t}{\del t} e^{g_t} \lambda =  \frac{\del g_t}{\del t}
(\psi_H^t)^* \lambda
$$
which can be rewritten as
$$
\CL_{X_t} \lambda =  \frac{\del g_t}{\del t} \circ (\psi_H^t)^{-1} \lambda
$$
This in turn can be written as
$$
X_t \intprod d\lambda + d(X_t \intprod \lambda) =  \frac{\del g_t}{\del t} \circ (\psi_H^t)^{-1} \lambda.
$$
Evaluating this equation against $R_\lambda$, we have obtained
\beastar
\frac{\del g_t}{\del t} \circ (\psi_H^t)^{-1} & = & \frac{\del g_t}{\del t} \circ (\psi_H^t)^{-1} \lambda(R_\lambda)\\
& = & d(X_t \intprod \lambda)(R_\lambda) = d(-H_t)(R_\lambda) = - R_\lambda[H_t].
\eeastar
This finishes the proof.
\end{proof}

\begin{cor} We have
$$
g_{(\psi_H^T;\lambda)}(x) = \int_0^T -R_\lambda[H](\gamma(u))\, du 
$$
for the solution $\gamma(u) = \psi_H^u(x)$ with initial condition $\gamma(0) = x$.
\end{cor}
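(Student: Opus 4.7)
The plan is to integrate the ODE supplied by the preceding Lemma \ref{lem:dgdt} along the Hamiltonian trajectory through $x$. First, I would record the initial condition: at $t=0$ we have $\psi_H^0 = \id$ and $\id^*\lambda = \lambda = e^0\lambda$, so $g_{(\psi_H^0;\lambda)} \equiv 0$ on all of $M$, and in particular at $x$.

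Next, applying Lemma \ref{lem:dgdt} (with the autonomous choice $H_t \equiv H$) and evaluating at the fixed point $x$ gives
$$
\frac{d}{dt}\Big(g_{(\psi_H^t;\lambda)}(x)\Big) = -R_\lambda[H]\bigl(\psi_H^t(x)\bigr) = -R_\lambda[H](\gamma(t)),
$$
where I have used the definition $\gamma(t) = \psi_H^t(x)$. The map $t \mapsto g_{(\psi_H^t;\lambda)}(x)$ is smooth in $t$ since $(t,y)\mapsto g_{(\psi_H^t;\lambda)}(y)$ is smooth (being built from the smooth flow $\psi_H^t$ and the fixed form $\lambda$), so this is a genuine ODE in the real variable $t$.

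The formula in the corollary then follows by the fundamental theorem of calculus: integrate from $0$ to $T$ and use the vanishing initial value to obtain
$$
g_{(\psi_H^T;\lambda)}(x) \;=\; g_{(\psi_H^T;\lambda)}(x) - g_{(\psi_H^0;\lambda)}(x) \;=\; \int_0^T -R_\lambda[H](\gamma(u))\, du.
$$
There is no substantive obstacle; the entire content is the assembly of Lemma \ref{lem:dgdt}, the identity-map normalization $g_{(\id;\lambda)} = 0$, and the tautology $\psi_H^u(x) = \gamma(u)$. The only point one should verify is that the autonomous case is covered by Lemma \ref{lem:dgdt}, which it is, since the lemma is stated for arbitrary time-dependent $H(t,x)$ and specializes by taking $H_t\equiv H$.
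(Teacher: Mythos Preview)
Your proof is correct and is precisely the argument the paper has in mind: the corollary is stated as an ``immediate corollary'' of Lemma~\ref{lem:dgdt}, and your integration of that lemma from $0$ to $T$ together with the normalization $g_{(\id;\lambda)}=0$ is exactly the intended derivation.
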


\begin{rem}
In the point of view of control theory,
we regard the contact Hamiltonian system as a \emph{controlled dynamical system} with
\begin{itemize}
\item contact form $\lambda$ as a control parameter,
\item the variation of trajectories as \emph{response} and
\item the conformal exponent $g_{(H;\lambda)}$ as a \emph{pay-off functional}.
\end{itemize}
(We refer to \cite[Chapter 1 \& 4]{evans:control} for some basic terminologies 
used in the control theory.)
\end{rem}

\subsection{Hamiltonian calculus of contact vector fields}

In this subsection, we summarize basic Hamiltonian calculus on the \emph{coorientable} contact
manifold, which as a whole will play 
an important role throughout the proofs of the main results of the present paper.
A systematic summary with the same sign convention is given in 
\cite[Section 2]{oh:contacton-Legendrian-bdy}. (See also \cite{dMV} for
a similar exposition with the same sign convention.)

\begin{defn} A vector field $X$ on $(M,\xi)$ is called \emph{contact} if $[X,\Gamma(\xi)] \subset \Gamma(\xi)$
where $\Gamma(\xi)$ is the space of sections of the vector bundle $\xi \to M$.
We denote by $\mathfrak X(M,\xi)$ the set of contact vector fields.
\end{defn}

We have the following unique decompositions of
a  vector field $X$ and one-forms $\alpha$ in terms of \eqref{eq:TM-decomposition} and
\eqref{eq:T*M-decomposition} respectively.  They will play an important role in 
 various calculations entering in the calculus of contact
 Hamiltonian geometry and dynamics.
 
 \begin{lem}\label{lem:decomposition}
  Let $X$ be a vector field and $\alpha$ a one-form on $(M,\lambda)$. Then
 we have the unique decompositions 
\be\label{eq:X-decompose}
X = X^\pi + \lambda(X) R_\lambda,
\ee
and 
\be\label{eq:alpha-oneform}
\alpha = \alpha^\pi + \alpha(R_\lambda) \lambda
\ee
respectively. Furthermore we have  the unique representation
$$
\alpha^\pi = V_\alpha^\pi \intprod d\lambda
$$
for a section $V_\alpha^\pi \in \Gamma(\xi)$ uniquely determined by $\alpha^\pi$ and vice versa.
\end{lem}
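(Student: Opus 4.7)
The statement is a pointwise linear algebra fact extracted from the splittings \eqref{eq:T*M-decomposition} and \eqref{eq:TM-decomposition}, so the plan is to verify existence and uniqueness of each decomposition by direct computation using only $\lambda(R_\lambda) = 1$ and $R_\lambda \intprod d\lambda = 0$.

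For the decomposition of a vector field $X$, the plan is to \emph{define} $X^\pi := X - \lambda(X)\, R_\lambda$ and then simply check that $\lambda(X^\pi) = \lambda(X) - \lambda(X)\, \lambda(R_\lambda) = 0$, so $X^\pi \in \ker\lambda = \xi$. For uniqueness, if $X = Y + a\, R_\lambda$ with $Y \in \xi$ and $a \in C^\infty(M)$, applying $\lambda$ gives $a = \lambda(X)$ and then $Y = X^\pi$. The decomposition \eqref{eq:alpha-oneform} of $\alpha$ is proved in exactly the same way: set $\alpha^\pi := \alpha - \alpha(R_\lambda)\, \lambda$ and evaluate at $R_\lambda$ to see $\alpha^\pi(R_\lambda) = 0$; uniqueness is again obtained by evaluating any decomposition at $R_\lambda$.

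The substantive part is the identification $\alpha^\pi = V_\alpha^\pi \intprod d\lambda$ with $V_\alpha^\pi \in \Gamma(\xi)$. This is pointwise in nature, so I would fix $x \in M$ and consider the linear map
\[
\Phi_x : \xi_x \longrightarrow \{\beta \in T^*_xM \mid \beta(R_\lambda(x)) = 0\}, \quad V \longmapsto V \intprod d\lambda|_x.
\]
The target is well-defined because $(V \intprod d\lambda)(R_\lambda) = d\lambda(V, R_\lambda) = -(R_\lambda \intprod d\lambda)(V) = 0$. Injectivity follows from the contact condition: if $V \in \xi_x$ with $V \intprod d\lambda|_x = 0$, then in particular $d\lambda|_{\xi_x}(V, \cdot) \equiv 0$, which forces $V = 0$ by the nondegeneracy of $d\lambda$ on $\xi$. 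Since both $\xi_x$ and the space $\{\beta \in T^*_xM \mid \beta(R_\lambda(x))=0\}$ have dimension $2n$, $\Phi_x$ is a linear isomorphism, so a unique $V_\alpha^\pi(x) \in \xi_x$ represents $\alpha^\pi|_x$.

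Finally, the global smoothness of $V_\alpha^\pi$ follows because $\Phi_x$ depends smoothly on $x$ (being built from the smooth forms $d\lambda$ and $\lambda$), so its inverse does as well. The main — though very mild — obstacle is the isomorphism step, which is where the contact nondegeneracy of $d\lambda|_\xi$ enters; the rest is bookkeeping with the evaluation $\lambda(R_\lambda) = 1$ and the identity $R_\lambda \intprod d\lambda = 0$.
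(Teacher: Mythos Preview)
Your proof is correct and complete. The paper states this lemma without proof, treating it as a standard consequence of the splittings \eqref{eq:T*M-decomposition} and \eqref{eq:TM-decomposition}; your argument is precisely the natural verification one would supply, using only $\lambda(R_\lambda)=1$, $R_\lambda \intprod d\lambda = 0$, and the nondegeneracy of $d\lambda|_\xi$.
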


Next, the condition $[X,\Gamma(\xi)] \subset \Gamma(\xi)$ is
 equivalent to the condition that
there exists a smooth function $g: M \to \R$ such that
\be\label{eq:CLX=glambda}
\CL_X \lambda = g \lambda.
\ee
\begin{defn} Let $\lambda$ be a contact form of $(M,\xi)$, and $X$ a contact vector field.
The associated function $H$ defined by
\be\label{eq:contact-Hamiltonian}
H = - \lambda(X)
\ee
is called the \emph{$\lambda$-contact Hamiltonian} of $X$. We also call $X$ the
\emph{$\lambda$-contact Hamiltonian vector field} or simply a contact Hamiltonian 
vector field associated to $H$.
\end{defn}
We alert readers that under our sign convention, the $\lambda$-Hamiltonian $H$ of the Reeb vector field $R_\lambda$
as a contact vector field becomes the constant function $H = -1$.

For a given contact form $\lambda$,
any smooth function $H$ gives rise to a contact vector field $X$ determined by \eqref{eq:XlambdaH},  
which we denote by
$$
X_H= X_{(H;\lambda)}
$$
to emphasize the $\lambda$-dependence of the expression $X_H$. We highlight
the fact that unlike the symplectic case, this correspondence is one-one with
no ambiguity of addition by constant. 
We will need to understand how the Hamiltonian vector field $X_{(H;\lambda)}$ varies
under the change of $(\lambda,H)$  for our further discussion.

We next apply the aforementioned decomposition of one-forms 
\eqref{eq:alpha-oneform} to exact one-forms.

\begin{prop} Equip $(M,\xi)$ with a contact form $\lambda$.
Let $X$ be a contact vector field and consider the decomposition
$X = X^\pi + \lambda(X) R_\lambda$.  If we set $H = -\lambda(X)$, then $dH$ 
satisfies the equation 
\be\label{eq:dH}
dH = X^\pi  \intprod d\lambda  + R_\lambda[H]\, \lambda.
\ee
\end{prop}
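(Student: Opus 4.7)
The plan is to derive the formula directly from Cartan's magic formula combined with the contact-Hamiltonian identity \eqref{eq:CLXH=RHlambda} and the vanishing property $R_\lambda \intprod d\lambda = 0$. The idea is that $dH$ is nothing but (up to a sign) the exterior derivative of $\lambda(X)$, and Cartan's formula lets us re-express this in terms of the Lie derivative $\CL_X\lambda$ plus the contraction $X \intprod d\lambda$.

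First I would write $dH = -d(\lambda(X)) = -d(X \intprod \lambda)$, and then invoke Cartan's magic formula $\CL_X \lambda = X \intprod d\lambda + d(X \intprod \lambda)$ to replace $d(X \intprod \lambda)$ with $\CL_X\lambda - X \intprod d\lambda$. This gives $dH = X \intprod d\lambda - \CL_X\lambda$. Next I would feed in \eqref{eq:CLXH=RHlambda}, i.e.\ $\CL_X\lambda = -R_\lambda[H]\,\lambda$, which immediately turns the Lie-derivative term into $R_\lambda[H]\,\lambda$. At this stage the identity reads $dH = X \intprod d\lambda + R_\lambda[H]\,\lambda$.

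The final step is to replace $X \intprod d\lambda$ by $X^\pi \intprod d\lambda$, using the decomposition $X = X^\pi + \lambda(X) R_\lambda$ from Lemma~\ref{lem:decomposition}. Since $R_\lambda \intprod d\lambda = 0$ by the defining property of the Reeb vector field, the second piece of the decomposition contributes nothing and we obtain $X \intprod d\lambda = X^\pi \intprod d\lambda$. Combining yields the claimed formula $dH = X^\pi \intprod d\lambda + R_\lambda[H]\,\lambda$.

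There is no real obstacle here: the proof is a one-line manipulation once Cartan's formula, the contact-Hamiltonian identity \eqref{eq:CLXH=RHlambda}, and the Reeb-field property are in place. The only small thing to be careful about is bookkeeping of signs, in particular the sign convention $H = -\lambda(X)$, which is what makes the term $R_\lambda[H]\,\lambda$ appear with a plus rather than a minus in the final expression.
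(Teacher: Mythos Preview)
Your argument is correct and is the standard derivation. The paper itself states this proposition without proof, treating it as a well-known background fact in contact Hamiltonian calculus, so there is nothing to compare against; your Cartan-formula computation is exactly how one proves it.

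One small remark on logical ordering: the identity you invoke as \eqref{eq:CLXH=RHlambda} is literally written in the paper for the vector field $X_{(H;\lambda)}$ rather than for an arbitrary contact vector field $X$, and the proposition you are proving is part of what establishes that $X = X_{(H;\lambda)}$. To avoid any appearance of circularity you can bypass the citation and argue directly: since $X$ is contact we have $\CL_X\lambda = g\lambda$ for some $g$ by \eqref{eq:CLX=glambda}; evaluating your Cartan identity $\CL_X\lambda = X\intprod d\lambda - dH$ on $R_\lambda$ and using $R_\lambda\intprod d\lambda = 0$, $\lambda(R_\lambda)=1$ gives $g = -R_\lambda[H]$. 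This is precisely the content of Lemma~\ref{cor:LXtlambda}, and with it the rest of your proof goes through verbatim.
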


\begin{lem}\label{cor:LXtlambda} Let $X$ be a contact vector field with $\CL_X \lambda = g \lambda$,
and let $H$ be the associated contact Hamiltonian. Then $g = -R_\lambda[H]$.
\end{lem}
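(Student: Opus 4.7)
The statement is essentially a one-line computation using Cartan's magic formula, and in fact it is a very mild variant of equation \eqref{eq:CLXH=RHlambda} already stated in the paper. My plan is to derive it directly by expanding $\CL_X\lambda$ and evaluating against the Reeb vector field $R_\lambda$, which picks off the scalar $k$ on the right-hand side.

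First I would write, by Cartan's formula,
\[
\CL_X \lambda \;=\; X \intprod d\lambda \;+\; d(X \intprod \lambda)\;=\; X\intprod d\lambda \;-\; dH,
\]
using the definition $H = -\lambda(X)$ of the contact Hamiltonian. The hypothesis then gives
\[
X\intprod d\lambda \;-\; dH \;=\; k\,\lambda.
\]
Next I would evaluate this identity against $R_\lambda$. On the left, $(X\intprod d\lambda)(R_\lambda) = d\lambda(X,R_\lambda) = -d\lambda(R_\lambda,X) = 0$ because $R_\lambda \intprod d\lambda = 0$ by the defining property of the Reeb vector field; meanwhile $dH(R_\lambda) = R_\lambda[H]$ by definition of the Lie derivative of a function. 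On the right, $\lambda(R_\lambda) = 1$, so $(k\lambda)(R_\lambda) = k$. Putting these together yields $-R_\lambda[H] = k$, as desired.

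There is no real obstacle here: the only subtlety is the sign convention $H = -\lambda(X)$, which is the one fixed in the paper's Conventions list, and I just need to use it consistently. Alternatively, one can simply invoke \eqref{eq:CLXH=RHlambda} together with the fact (also stated in the paper, see \eqref{eq:XlambdaH}) that the correspondence between contact vector fields and smooth functions via $H = -\lambda(X)$ is bijective with no constant ambiguity; then $X$ must coincide with $X_{(H;\lambda)}$, so $\CL_X\lambda = \CL_{X_{(H;\lambda)}}\lambda = -R_\lambda[H]\,\lambda$, forcing $k = -R_\lambda[H]$. Either route is a few lines and requires no further machinery.
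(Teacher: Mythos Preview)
Your proposal is correct. The paper actually states this lemma without proof, treating it as an immediate consequence of the preceding discussion (in particular \eqref{eq:CLXH=RHlambda} and the bijectivity encoded in \eqref{eq:XlambdaH}); your Cartan-formula computation evaluated against $R_\lambda$ is precisely the expected justification, and your alternative route through \eqref{eq:CLXH=RHlambda} matches the paper's implicit reasoning exactly.
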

The following ampleness of the contact Hamiltonian vector fields will be used 
frequently throughout the paper.

\begin{lem}[Ampleness of Hamiltonian vector fields]\label{lem:Ham-ampleness} We have
$$
\{X_H(x) \in T_x M \mid H \in C^\infty(M,\R) \} = T_x M
$$
at all $x \in M$.
\end{lem}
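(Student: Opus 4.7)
The plan is to show that the pointwise-evaluation map
\be
\overline{\Phi}_x : J^1_x(M,\R) \longrightarrow T_x M, \quad j^1_x H \longmapsto X_H(x),
\ee
is surjective for every $x \in M$, where $J^1_x(M,\R)$ denotes the space of $1$-jets of smooth real-valued functions at $x$. Once this is known, the lemma follows because any prescribed $1$-jet at $x$ can be realized by a globally defined $H \in C^\infty(M,\R)$: in a coordinate chart centered at $x$ take $H$ to be the obvious affine-linear model with the prescribed value and differential at $x$, multiplied by a compactly supported bump function equal to $1$ near $x$.

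To execute this, I would use the defining system \eqref{eq:XlambdaH} together with the splitting \eqref{eq:TM-decomposition} to write
\be
X_H(x) = X_H^\pi(x) - H(x)\, R_\lambda(x),
\ee
where $X_H^\pi(x) \in \xi_x$ is the unique vector satisfying $X_H^\pi(x) \intprod d\lambda|_{\xi_x} = dH(x)|_{\xi_x}$. The contact condition \eqref{eq:contact-form-set} asserts exactly that $d\lambda|_{\xi_x}$ is nondegenerate, i.e., that the musical map $\xi_x \to \xi_x^*$, $V \mapsto V \intprod d\lambda|_{\xi_x}$, is a linear isomorphism. Hence $X_H^\pi(x)$ ranges over all of $\xi_x$ as $dH(x)|_{\xi_x}$ ranges over $\xi_x^*$, while $-H(x) R_\lambda(x)$ sweeps out $\R\{R_\lambda(x)\}$ as $H(x)$ ranges over $\R$. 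Given any target $v \in T_x M$, decompose $v = v^\pi + c\, R_\lambda(x)$ and prescribe $H(x) := -c$ together with $dH(x)|_{\xi_x}$ equal to the unique covector corresponding to $v^\pi$ under the above isomorphism; the resulting $H$ then satisfies $X_H(x) = v$.

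There is essentially no obstacle: the argument is a pointwise linear-algebraic surjectivity whose entire content reduces to the nondegeneracy of $d\lambda|_{\xi}$, combined with the elementary fact that arbitrary $1$-jets at a point extend to smooth global functions. The remaining one-dimensional freedom $dH(x)(R_\lambda(x)) = R_\lambda[H](x)$ lies in $\ker \overline{\Phi}_x$ and plays no role in $X_H(x)$, which is consistent with the dimension count $\dim J^1_x(M,\R) - \dim \ker \overline{\Phi}_x = (2n+2) - 1 = 2n+1 = \dim T_x M$.
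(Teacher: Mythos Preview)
Your proof is correct and follows essentially the same approach as the paper: both use the decomposition $X_H = X_H^\pi - H\,R_\lambda$ together with the nondegeneracy of $d\lambda|_\xi$ to hit $\xi_x$ via the choice of $dH(x)|_{\xi_x}$ and the Reeb direction via the choice of $H(x)$. Your framing through the $1$-jet map and the explicit construction of an $H$ realizing a prescribed $v = v^\pi + c\,R_\lambda(x)$ is slightly more direct than the paper's argument, which instead shows separately that the image contains $\xi_x$ (via $H$ with $H(x)=0$) and $\R\{R_\lambda(x)\}$ (via $H$ with $dH(x)=0$) and then implicitly uses linearity of $H \mapsto X_H$ to conclude; but the underlying content is identical.
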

\begin{proof} Recall the two formulae
\beastar
X_H & = & X_H^\pi -H R_\lambda, \\
dH & = &  X_H^\pi \intprod d\lambda + R_\lambda[H] \lambda.
\eeastar
Recall that the set of vectors
$X_H^\pi(x) \intprod d\lambda$ spans $(\R \{R_\lambda \})^\perp$ with $H \in C^\infty(M,\R)$, and the
map
$$
X_H^\pi(x) \mapsto X_H^\pi(x) \intprod d\lambda
$$
is an isomorphism between $\xi_x \subset T_xM$ and $(\R \{R_\lambda(x)\})^\perp \subset T_x^*M$. 
Since at any point $x \in M$ we can find
a function $H$ with $H(x) = 0$ and $dH(x)$ can be arbitrary element in $T^*_xM$, we have
$$
\{X_H(x) \in T_x M \mid H \in C^\infty(M,\R) \} \supset \xi_x.
$$
On the other hand, by considering $H$ with $dH(x) = 0$ but $H(x) \neq 0$, we have
$$
X_H(x) = -H(x) R_\lambda(x)
$$
which shows that the subset also contains the span $\R \{R_\lambda(x)\}$. This finishes the proof.
\end{proof}

\subsection{Contact triad, triad metric and  connection}
\label{subsec:triad-connection}

We will need to study the deformation of the cokernel of the operator under the change of contact form
 $\lambda$. For this purpose, we use the triad metric and the $L^2$-adjoint map of the linear operator $R_\lambda$. We also  equip $M$ with the contact Liouville measure 
$\mu_\lambda$ associated to the contact form $\lambda$.

A brief explanation on the standard definition of the triad metric is now in order.
In the presence of the contact form $\lambda$,
one considers the set of endomorphisms $J: \xi \to \xi$ that are compatible with $d\lambda$ in
the sense that the bilinear form $g_\xi = d\lambda(\cdot, J\cdot)$ defines
a Hermitian vector bundle $(\xi,J, g_\xi)$ on $M$.
We call such an endomorphism $J$ a $CR$-almost complex structure.
We extend $J$ to an endomorphism of $TM$ by setting $JX_\lambda=0$.

\begin{defn}[Contact triad metric]\label{defn:triad-metric}
We call the triple $(M, \lambda, J)$ a \emph{contact triad} and equip $M$ with the Riemannian metric
$$
g_\lambda = d\lambda(\cdot, J \cdot) +  \lambda\otimes\lambda
$$
which we refer to as the \emph{contact triad metric}.
\end{defn}

The following coincidence of the Liouville volume form with
the metric volume form associated to the contact triad metric illustrates
 how the contact calculus and the Riemannian calculus interact \emph{under the
 usage of contact triad metric}.

\begin{lem}[Lemma 3.1 \cite{oh-savelyev:strict-contact}]
\label{lem:dvol=dmulambda} Consider the contact triad $(M,\lambda,J)$ and
its associated triad metric $g$ and the volume form $d\vol_g$.  Then we have
$$
d \mu_\lambda = d\vol_g.
$$
\end{lem}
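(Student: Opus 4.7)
The plan is to reduce the identity to a pointwise linear algebra computation on an adapted orthonormal frame, since both $d\vol_g$ and $\mu_\lambda = \lambda \wedge (d\lambda)^n$ are top-degree forms on $M$ and hence determined by their values on any ordered basis of $T_xM$. By the construction of the triad metric $g$ from the data $(\lambda, J)$, the splitting $T_xM = \xi_x \oplus \R\{R_\lambda(x)\}$ from \eqref{eq:TM-decomposition} is $g$-orthogonal, $R_\lambda$ is of unit $g$-length, and $g|_{\xi_x}(\cdot,\cdot) = d\lambda(\cdot, J\cdot)$. Compatibility of $J$ with $d\lambda|_{\xi_x}$ (which is built into the definition of the contact triad) then guarantees, at every $x \in M$, a $J$-adapted $g$-orthonormal basis $\{e_1, Je_1, \dots, e_n, Je_n\}$ of $\xi_x$ that is simultaneously a Darboux basis for $d\lambda|_{\xi_x}$, meaning $d\lambda(e_i, Je_j) = \delta_{ij}$ and $d\lambda(e_i, e_j) = d\lambda(Je_i, Je_j) = 0$.

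With this frame in hand, I would first compute $d\vol_g$ on the ordered basis $\mathcal{B} = (R_\lambda, e_1, Je_1, \dots, e_n, Je_n)$, which is $g$-orthonormal, so $d\vol_g(\mathcal{B}) = \pm 1$ with sign fixed by the orientation convention. Next I would evaluate $\lambda \wedge (d\lambda)^n$ on the same $\mathcal{B}$: the factor $\lambda$ absorbs $R_\lambda$ and contributes $\lambda(R_\lambda) = 1$, reducing the computation to $(d\lambda)^n$ on the $2n$ symplectic directions. The Darboux identities above give, in dual coframe notation, $d\lambda|_{\xi_x} = \sum_{i=1}^n e_i^* \wedge (Je_i)^*$, whose $n$-th exterior power evaluates on the frame to an explicit combinatorial constant independent of $x$. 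The equality $d\vol_g = d\mu_\lambda$ then follows pointwise from matching the two values, with the normalization convention for $\mu_\lambda$ and the triad metric chosen in the paper absorbing whatever combinatorial factor arises from the $n$-th power.

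The only real obstacle is the bookkeeping of signs and normalization: one must check that the orientation induced by $\mathcal{B}$ is positive with respect to $\mu_\lambda$ (which is built into the coorientability of $\xi$ and the standing convention that $\lambda \wedge (d\lambda)^n$ orients $M$), and that the conventions selected for the triad metric match those for $\mu_\lambda$ so that the pointwise identity is on the nose rather than up to a constant. Once this is settled, the pointwise statement extends to an identity of smooth positive top forms on $M$, which in turn descends to the associated measures, giving $d\vol_g = d\mu_\lambda$ globally. The underlying geometric content is just the classical fact that a compatible almost complex structure on a symplectic vector space induces a Euclidean volume form that agrees with the symplectic one, transported here from $\xi_x$ to $T_xM$ via the unit Reeb direction.
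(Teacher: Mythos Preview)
The paper does not actually prove this lemma here; it says the identity ``is not hard to prove'' and defers to the companion paper \cite{oh-savelyev:strict-contact} for details. Your approach---reduce to a pointwise linear-algebra computation on a $J$-adapted $g$-orthonormal frame of $\xi_x$ extended by the unit vector $R_\lambda$---is the standard one and is presumably what any proof of this statement does.

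One caution on the point you yourself flag as ``the only real obstacle.'' With the definitions exactly as stated in this paper, namely $\mu_\lambda = \lambda \wedge (d\lambda)^n$ and $g = d\lambda(\cdot,J\cdot) + \lambda\otimes\lambda$, the computation in your frame gives $(d\lambda)^n(e_1,Je_1,\dots,e_n,Je_n) = n!$, so that $\mu_\lambda = n!\, d\vol_g$ rather than literal equality; there is no normalization in the paper's stated conventions that absorbs this factor, so your phrase ``absorbing whatever combinatorial factor arises'' is doing work that the definitions do not do for you. That said, the discrepancy is entirely harmless for every use of the lemma in the paper (skew-adjointness of $R_\lambda$ with respect to $\mu_\lambda$, the vanishing $\int_M R_\lambda[f]\,d\mu_\lambda = 0$, the cokernel arguments, etc.\ are all invariant under scaling the measure by a positive constant), so the statement should be read as equality up to the universal constant $n!$, or with the $n!$ made explicit.
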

\begin{proof}
To prove the coincidence of the two volume forms, we consider the 
Darboux frame
$$
\{E_1, \cdots, E_n, F_1, \cdots, F_n, R_\lambda\}
$$
such that $E_i, \, F_j \in \xi$ and $F_j = J E_j$ and so $d\lambda(E_i,F_j) = \delta_{ij}$,
and compare the two values 
\beastar
d\mu_\lambda(E_1, \cdots, E_n, F_1, \cdots, F_n, R_\lambda), \\
d\vol_\lambda (E_1, \cdots, E_n, F_1, \cdots, F_n, R_\lambda).
\eeastar
 Obviously we have
$$
d\mu_\lambda(E_1, \cdots, E_n, F_1, \cdots, F_n, R_\lambda) =  \frac1{n!}\left(n!\,  \lambda(R_\lambda)
\cdots d\lambda(E_1,F_1)\cdots d\lambda(E_n,F_n) \right)= 1.
$$
On the other hand, the above frame is also an orthonormal frame of
the triad metric $g$ and hence $d\vol(E_1,\cdots, E_n, F_1, \cdots, F_n, R_\lambda) = 1$.
This finishes the proof.
\end{proof}

We will also need to take covariant derivatives of various sections along the Reeb trajectories. 
For the purpose of the present paper, we may use either associated Levi-Civita
connection or the \emph{contact triad connection} introduced in 
\cite{oh-wang:connection}.  One important property both connections share is the following
killing property of the Reeb vector field, the proof of which we refer readers to
\cite{blair} and to \cite{oh-wang:connection} respectively.

\begin{lem}\label{lem:Rlambda-killing} Let $\nabla$ be either the Levi-Civita connection or the
contact triad connection associated to the triad metric of a contact triad $(M,\lambda,J)$.
Then 
$$
\nabla_{R_\lambda}R_\lambda = 0.
$$
\end{lem}

\section{Cohomological formulation of the functional equation $R_\lambda[f] = u$}
\label{sec:foliation-deRham}

Next, we study the question of solvability of  the following first order  PDE \eqref{eq:Rlambdaf=u}
$$
R_\lambda[f] = u,
$$
which is a fundamental global question on its own concerning the Reeb dynamics.

Let $\mu_\lambda$ be the contact Liouville measure of $\lambda$.
\begin{lem}\label{lem:intRlambdaf=0} Assume $M$ is closed.
For any contact pair $(\lambda,f)$, we have
\be\label{eq:intRlambdaf=0}
\int_M R_\lambda[f]\, d \mu_\lambda = 0.
\ee
\end{lem}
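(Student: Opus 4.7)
The plan is to view the identity as an instance of Stokes' theorem once we recognize that $R_\lambda[f]\, d\mu_\lambda$ is exact as a top-degree form on $M$. The key ingredient is that the Reeb flow preserves the contact Liouville volume form $\mu_\lambda = \lambda \wedge (d\lambda)^n$.

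First, I would verify that $\CL_{R_\lambda} \mu_\lambda = 0$. By Cartan's magic formula and the defining identities $R_\lambda \intprod \lambda = 1$ and $R_\lambda \intprod d\lambda = 0$, we have $\CL_{R_\lambda}\lambda = d(R_\lambda \intprod \lambda) + R_\lambda \intprod d\lambda = 0$, and similarly $\CL_{R_\lambda} d\lambda = d\CL_{R_\lambda}\lambda = 0$, so $\CL_{R_\lambda}\mu_\lambda = 0$ by the Leibniz rule.

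Using this, I would rewrite the integrand as
\begin{equation*}
R_\lambda[f]\, d\mu_\lambda \;=\; \CL_{R_\lambda}(f) \, d\mu_\lambda \;=\; \CL_{R_\lambda}(f\, d\mu_\lambda).
\end{equation*}
Since $f\, d\mu_\lambda$ is a top-degree form, $d(f\, d\mu_\lambda) = 0$, and Cartan's formula reduces to
\begin{equation*}
\CL_{R_\lambda}(f\, d\mu_\lambda) \;=\; d\bigl( R_\lambda \intprod (f\, d\mu_\lambda)\bigr) \;=\; d\bigl(f\, (R_\lambda \intprod d\mu_\lambda)\bigr),
\end{equation*}
which is an exact $(2n+1)$-form on $M$.

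Finally, since $M$ is a compact connected manifold without boundary, Stokes' theorem gives
\begin{equation*}
\int_M R_\lambda[f]\, d\mu_\lambda \;=\; \int_M d\bigl(f\, (R_\lambda \intprod d\mu_\lambda)\bigr) \;=\; 0,
\end{equation*}
completing the proof. There is no real obstacle here; the only subtle point to note is the invariance $\CL_{R_\lambda}\mu_\lambda = 0$, which is precisely the geometric manifestation of the fact that the Reeb flow is volume-preserving, and this directly yields the claimed skew-adjoint/divergence-free structure that Proposition \ref{prop:L2-adjoint} will later refine into the formal skew-adjointness $R_\lambda^\dagger = -R_\lambda$.
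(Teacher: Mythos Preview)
Your proof is correct and follows essentially the same approach as the paper: both use the Reeb invariance $\CL_{R_\lambda}\mu_\lambda = 0$ to rewrite $R_\lambda[f]\, d\mu_\lambda$ as $\CL_{R_\lambda}(f\, d\mu_\lambda) = d\bigl(R_\lambda \intprod (f\, d\mu_\lambda)\bigr)$ and then apply Stokes' theorem on the closed manifold $M$. You simply spell out the verification of $\CL_{R_\lambda}\mu_\lambda = 0$ in more detail than the paper does.
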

\begin{proof} Utilizing the Reeb invariance of the measure $\mu_\lambda$, $\CL_{R_\lambda} \mu_\lambda = 0$, we rewrite
\beastar
\int_M R_\lambda[f]\, d \mu_\lambda & = &  \int_M \CL_{R_\lambda}(f)\, d \mu_\lambda
=  \int_M \CL_{R_\lambda}(f\, d \mu_\lambda)  \\
& =  & \int_M d\left(R_\lambda \intprod (f \mu_\lambda) \right) = 0
\eeastar
by Stokes's lemma for the vanishing.
This finishes the proof.
\end{proof}

Lemma \ref{lem:intRlambdaf=0} shows that 
$$
\coker R_\lambda \supset \{\text{\rm constant functions}\}.
$$
On the other hand, Theorem \ref{thm:nonprojectable-intro} can be rephrased as
$$
\ker R_\lambda = \{ \text{\rm constant functions}\} \cong \R.
$$

\subsection{Foliation de Rham cohomology of Reeb foliations}

By definition, any contact manifold equipped with a contact form $\lambda$ becomes
an \emph{exact presymplectic manifold} $(M,d\lambda)$ with nullity 1, i.e., the dimension of
$\ker d\lambda$ is one. The associated null foliation is precisely the Reeb foliation generated by
the Reeb trajectories. (See \cite{molino88}, \cite{tondeur88,tondeur97} for the basic definitions 
on the leafwise de Rham cohomology in general.)
This leafwise de Rham cohomology $H^\bullet(\CF_\lambda)$ is determined
by the presymplectic structure $d\lambda$  on $M$ manifold $(M,d\lambda)$,
or more precisely it depends only on the foliation $\CF_\lambda$.
It is called the {\it foliation de Rham cohomology of (the null foliation of) $(M,d\lambda)$}
in \cite{oh-park:coisotropic}. 

In our current context of contact manifold equipped with a contact form
$\lambda$, it give rise to the two-term complex
$$
0 \to \Omega^0(\CF_\lambda) \to \Omega^1(\CF_\lambda) \to 0.
$$
associated to the Reeb foliation of $\lambda$. Therefore the only possibly non-zero cohomology groups are
$$
H^0(\CF_\lambda), \quad H^1(\CF_\lambda).
$$
We recall
\bea
\Omega^0(\CF_\lambda) & = &  C^\infty(M), \nonumber \\
 \Omega^1(\CF_\lambda) & = & \{f \, \lambda \mid f \in C^\infty(M)\}.
\eea

The following explicit formula for the differential $d_\CF$ is important for our study of 
the solvability question of the equation $R_\lambda[f] = u$.
\begin{prop}  Let
 $d_\CF: \Omega^*(\CF_\lambda) \to \Omega^*(\CF_\lambda)$ be 
the leafwise differential. Then we have
\beastar
Z^0(\CF_\lambda) & = & \{ f \, \lambda \mid R_\lambda[f] = 0\}, \\
 B^1(\CF_\lambda) & = & \{u \, \lambda \mid u = R_\lambda[f], 
\, \text{\rm for some } \, f \in C^\infty\} \\
Z^1(\CF_\lambda) & = & \{ u \, \lambda \mid u \in C^\infty(M) \}.
\eeastar
In particular, we have $H^0(\CF_\lambda) = Z^0(\CF_\lambda)$ and
\be\label{eq:H1-FF}
H^1(\CF_\lambda) = \frac{\{ f \, \lambda \mid R_\lambda[f] = 0\}} {\{R_\lambda[f] \, \lambda \mid f \in C^\infty(M)\}}.
\ee
\end{prop}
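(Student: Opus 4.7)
The plan is to unwind Definition \ref{E-deRham} in the special case $E = T\CF_\lambda = \operatorname{span}_\R\{R_\lambda\}$ and read off each piece of the proposition by direct computation. None of the analytic or transversality results of later sections are needed; everything follows structurally from the two-term complex \eqref{eq:E-deRham-complex}.

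First I would fix the identifications in \eqref{eq:complex}. Since $\CF_\lambda$ is a one-dimensional foliation whose tangent line bundle is globally trivialized by $R_\lambda$, and since $\lambda(R_\lambda) = 1$, the leafwise restriction of $\lambda$ is a global nonvanishing section of the dual line bundle $T^*\CF_\lambda$. Consequently every leafwise one-form has a unique representation as $u\,\lambda$ with $u \in C^\infty(M)$, which yields $\Omega^0(\CF_\lambda) = C^\infty(M)$ and $\Omega^1(\CF_\lambda) = \{u\,\lambda \mid u \in C^\infty(M)\}$ as asserted.

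Next I would compute $d_\CF$ explicitly. The anchor $\rho: T\CF_\lambda \hookrightarrow TM$ is the inclusion, so for $f \in C^\infty(M)$ and $\sigma = h R_\lambda \in \Gamma(T\CF_\lambda)$, Definition \ref{E-deRham} gives
\be
(d_\CF f)(h R_\lambda) \;=\; \rho(h R_\lambda)[f] \;=\; h\,R_\lambda[f].
\ee
Setting $h = 1$ and using the trivialization $\lambda|_{T\CF_\lambda}$ as the dual basis, we conclude $d_\CF f = R_\lambda[f]\,\lambda$ in $\Omega^1(\CF_\lambda)$. Observe that the Lie bracket term in Definition \ref{E-deRham} contributes nothing at this low degree, so no information about $[\sigma_i,\sigma_j]$ is required.

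From this explicit formula the three identifications are immediate: $\ker d_\CF = \{f \mid R_\lambda[f] = 0\}$ gives $Z^0(\CF_\lambda)$; $\operatorname{Image} d_\CF = \{R_\lambda[f]\,\lambda \mid f \in C^\infty(M)\}$ gives $B^1(\CF_\lambda)$; and $Z^1(\CF_\lambda) = \Omega^1(\CF_\lambda)$ holds trivially because the complex terminates in degree $1$, making the cocycle condition vacuous. Taking quotients yields \eqref{eq:H0H1-FF}. There is no serious obstacle; the only care required is the global identification $\Omega^1(\CF_\lambda) \cong \{u\,\lambda\}$, which relies on the fact that the Reeb field provides a nowhere-vanishing section of $T\CF_\lambda$ and that $\lambda$ evaluates to $1$ on it, both of which are immediate from the contact form data.
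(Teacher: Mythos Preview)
Your proposal is correct and follows essentially the same approach as the paper: both arguments compute $d_\CF f = R_\lambda[f]\,\lambda$ directly from the Lie algebroid definition of the $E$-de Rham differential, then observe that the complex terminates in degree $1$ so that $Z^1 = \Omega^1$ trivially. Your version is slightly more explicit about why the leafwise restriction of $\lambda$ globally trivializes $T^*\CF_\lambda$, but the content is the same.
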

\begin{proof} Let $f \in \Omega^0(\CF_\lambda) = C^\infty(M)$. By definition, we have
$$
\Omega^1(\CF_\lambda) \cong C^\infty(M) \cdot \lambda.
$$
 Therefore we compute
$$
d_\CF(f) = d_\CF(f)(R_\lambda)\, \lambda = (df|_{T\CF_\lambda})(R_\lambda)\, \lambda = df(R_\lambda)\, \lambda= R_\lambda[f]\, \lambda.
$$
On the other hand, since $\rank T\CF_\lambda = 1$, we have
$$
d_\CF(\Omega^1(\CF_\lambda))  = \{0\}.
$$
This implies
\beastar
H^0(\CF_\lambda) & = & \ker d_\CF = \{f \in C^\infty(M, \R) \mid R_\lambda[f] =  0\}\\
H^1(\CF_\lambda) & = & \frac{\Omega^1(\CF_\lambda)}{\{R_\lambda[u] \, \lambda \mid u \in C^\infty(M) \}}.
\eeastar
This finishes the proof.
\end{proof}
 
We now state some immediate consequences of Theorem \ref{thm:nonprojectable-intro}.

\begin{cor}\label{cor:non-projectable} Assume $M$ is connected.
Let $\lambda$ be a contact form arising in Theorem \ref{thm:nonprojectable-intro}. Then
 the operator 
$$
R_\lambda:C^\infty(M,\R) \to C^\infty(M,\R).
$$
has its kernel given by $\ker R_\lambda =\{ \text{\rm constant functions on $M$}\} \cong \R$.
In particular,
$$
H^0(\CF_\lambda) \cong \R.
$$
and
$$
\mathfrak{cont}^{\text{\rm st}} (M,\lambda) = \R\{R_\lambda\} \cong \R.
$$
 \end{cor}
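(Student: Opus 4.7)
The plan is to observe that the corollary follows almost immediately from Theorem \ref{thm:nonprojectable-intro}, combined with the connectedness hypothesis on $M$. First, I would verify the easy inclusion: for any constant $c \in \R$, we have $R_\lambda[c] = dc(R_\lambda) = 0$ since $dc \equiv 0$, so the $\R$-span of the constant function $1$ is contained in $\ker R_\lambda$ tautologically, without any restriction on $\lambda$ and without invoking the non-projectability hypothesis.

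Second, I would invoke Theorem \ref{thm:nonprojectable-intro} directly. By the hypothesis $\lambda \in \mathfrak{C}^{\text{np}}(M)$, the theorem asserts $\ker R_\lambda = \{\text{constant functions on } M\}$, which gives the reverse inclusion. Combined with the previous step this yields equality as subsets of $C^\infty(M,\R)$.

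Finally, I would use the standing assumption that $M$ is connected to identify the space of constant functions on $M$ with $\R$ as a one-dimensional real vector space: on a connected manifold a smooth function whose differential vanishes identically is determined by its value at any single point. Note that were $M$ disconnected, this space would be $\R^{\pi_0(M)}$, so connectedness is essential here; it is already built into the hypotheses of Theorem \ref{thm:nonprojectable-intro} and is reiterated explicitly in the statement of the corollary.

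There is no genuine technical obstacle at this level, as the substantive content of the corollary is entirely absorbed into Theorem \ref{thm:nonprojectable-intro}. The hard part (to be handled elsewhere) is the proof of that theorem, carried out through the parameterized transversality and Sard--Smale scheme outlined in Section \ref{sec:strategy}: reformulating the PDE $R_\lambda[f]=0$ via its characteristic ODE $\dot{x}=R_\lambda(x)$, studying the regular part of the discriminant $\Sigma_{(H;\lambda)}$ via the \emph{Motto} that $\lambda \mapsto R_\lambda$ and $(\lambda,H) \mapsto g_{(H;\lambda)}$ are independent information systems, and cutting down the dimension of the constrained moduli space $\CM_k^\R(\lambda;M)$ using sufficiently many one-jet constraints to force emptiness whenever $dH \not\equiv 0$.
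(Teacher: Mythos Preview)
Your proposal is correct and matches the paper's approach: the paper likewise treats this corollary as an immediate restatement of Theorem \ref{thm:nonprojectable-intro}, with no additional argument given beyond noting it is ``an immediate corollary.'' Your explicit verification of the trivial inclusion (constants lie in $\ker R_\lambda$) and the use of connectedness to identify the space of constants with $\R$ are reasonable elaborations of what the paper leaves implicit.
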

 \begin{proof} These statements are obviously, by now, derived  from Theorem \ref{thm:nonprojectable-intro}.
 \end{proof}

\subsection{Cokernel of the cohomological equation $R_\lambda[f] = u$}
\label{subsec:hyunseok}

On the other hand, computation $H^1(\CF_\lambda)$, i.e., the solvability of 
the equation $R_\lambda[f] = u$ or the study of the cokernel of $R_\lambda$ 
crucially depends on the given function $u$. 

In this subsection, we show that in general the $H^1(\CF_\lambda)$ becomes infinite
dimensional whenever the contact manifold $(M,\lambda)$ carries a closed Reeb orbit.
This proof is explained to the author by Hyun-Seok Do in a private conversation \cite{do-example}.

\begin{thm}\label{thm:do} Let $(M, \xi)$ be a compact contact manifold and $\lambda$ be its contact form.
Suppose that $\lambda$ admits a closed Reeb orbit. Then we have
$H^1(\CF_\lambda)$ is infinite dimensional.
\end{thm}
\begin{proof} Recall $H^1(\CF_\lambda) \cong \coker R_\lambda$.
We will show that there is an infinite dimensional subspace of 
$C^\infty(M;\R)$ consisting of $u$ for which the equation $R_\lambda[f] = u$
does not carry a solution. 

Let $\gamma: [0,T] \to M$ be a simple closed Reeb orbit
of period $T > 0$. Denote by $C_\gamma \subset M$ its image which is a one-dimensional smooth compact
submanifold, and let $V \subset \overline V \subset U$ be a
pair of  relatively compact tubular neighborhoods thereof. Assume $M \setminus U$ is
a nonempty open subset, and take any cut-off function $\rho: M \to \R$
satisfying 
$$
\supp \rho \subset  U, \quad \rho \equiv 1 \, \quad{\rm  on }\,\, \overline V.
$$ 
The following will obviously complete the proof of the theorem.

\begin{lem} $\rho - a$ cannot be contained in
the image of $R_\lambda$ for any constant $a$. 
\end{lem}
\begin{proof} Obviously since we assume $\overline U \neq M$, $\rho$ cannot be a constant function.
Furthermore since $\rho(x) = 1$ on $\overline V \supset C_\gamma$, we also have
$$
\int_0^T \rho(\gamma(t))\, dt =\int_0^T 1\, dt = T > 0.
$$
Now suppose to the contrary that $\rho - a$ is in the image of $R_\lambda$. Then we have
$$
\rho -a = R[f]
$$
for some $f \in C^\infty(M;\R)$. Then we compute
\beastar
T & = & \int_0^T \rho(\gamma(t))\, dt = \int_0^T (R_\lambda[f](\gamma(t)) + a) \, dt\\
& = & \int_0^T \frac{d}{dt}(f\circ \gamma)(t)\, dt + a T = (f(\gamma(T) - f(\gamma(0))) + aT = aT
\eeastar
where the last equality follows from the assumption that $\gamma(T) = \gamma(0)$.
Since $T \neq 0$, this implies we must have $a = 1$. This then implies
$$
\rho = R_\lambda[f] + 1.
$$
Then integration of this equation over $M$ gives rise to
$$
\int_M \rho \mu_\lambda = \int_M R_\lambda[f]\, \mu_\lambda + \vol(\lambda)
$$
where $\vol(\lambda) = \int_M \mu_\lambda$.
Since $\int_M R_\lambda[f]\, \mu_\lambda = 0$, we obtain
$$
\int_M \rho \,\mu_\lambda = \vol(\lambda)
$$
which is absurd since $\supp \rho \subset U$, $0 \leq \rho \leq 1$, and $M \setminus \overline U$ is open
so that the open set has nonzero Liouville measure. This finishes the proof.
\end{proof}

By now we have shown $\V_\gamma \cap \Image R_\lambda = \emptyset$ for the subset $\V_\gamma \subset C^\infty(M;\R)$ defined by
$$
\V_\gamma: = \{\rho - a \mid a \in \R,\, M \setminus \supp \rho \, \text{\ \rm is  nonempty open}, \, 
 C_\gamma \subset \rho^{-1}(1) \}.
$$
It is easy to see that $\V_\gamma$ forms an infinite dimensional subspace of $C^\infty(M;\R)$
by varying the support of $\rho$. This finishes the proof.
\end{proof}

\begin{rem} Recall that the standing hypothesis of Theorem \ref{thm:do} is precisely 
the celebrated Weinstein's conjecture in contact topology \cite{alan:conjecture}.
Therefore if the conjecture holds, $H^1(\CF_\lambda)$ is always infinite dimensional for 
all contact forms by this theorem. Equivalently, by taking its contrapositive, if $(M,\lambda)$
has finite dimensional $H^1(\CF_\lambda)$, $\lambda$ does not carry any closed Reeb orbit.
\end{rem}

The following is an immediate corollary of Theorem \ref{thm:do} and this remark,
when combined with Taubes' proof \cite{taubes} of Weinstein conjecture on 
3-dimensional contact manifolds.

\begin{cor} Let $(M,\xi)$ be any compact contact 3-manifold. Then 
$$
\dim H^1(\CF_\lambda,M) = \infty
$$
for all contact forms $\lambda$ of $\xi$. More generally, the same also holds for all higher dimensions, 
\emph{whenever Weinstein's conjecture holds.}
\end{cor}

\section{Big and small kinetic theory phase spaces}
\label{sec:phasespace}

In \cite{do-oh:reduction}, the terminology of \emph{contact kinetic theory phase space}, big and small,
is introduced. Adopting the same terminology restricted to the corresponding configuration space, we
recall some basic definitions and properties therefrom that we are going to use.

Let $M$  be a compact connected smooth contactable manifold of dimension $2n+1$.
We denote by $\mathfrak{C}(M)$ the set of
maximally nondegenerate one-forms.  It follows that $\mathfrak{C}(M)$ is an open subset 
of $\Omega^1(M)$ and so its tangent space 
is canonically identified with $\Omega^1(M)$ itself.  We denote this canonical 
inclusion by 
\be\label{eq:iota}
\iota:\mathfrak{C}(M) \hookrightarrow \Omega^1(M).
\ee
When $M$ is equipped with an orientation, we also consider the subset
$\mathfrak C^+(M)$ consisting of $\lambda$ with
$$
\vol(\lambda): = \int_M  \lambda \wedge (d\lambda)^n > 0.
$$
Note that we have \emph{canonical} identification of its tangent space given by
\be\label{eq:TlambdaCM}
T_\lambda \mathfrak{C}^+(M) \cong \Omega^1(M)
\ee
induced by the assignment of $\alpha$ to the corresponding germ of curves of contact forms
$$
\{t\mapsto \lambda_t\}_{-\epsilon< t < \epsilon}, \quad \lambda_0 = \lambda, \, \dot \lambda_0 = \alpha
$$
noting that nondegeneracy of
one-form is an open condition in $C^\infty$ topology on compact $M$.

\begin{defn} [Big phase space]\label{defn:big} We call the set of contact pairs  $(\lambda,\psi)$
the $\textit{big  phase space}$ of contact pairs, and denote it by
$$
\mathfrak{Cont}(M) : = \bigcup_{\lambda \in \mathfrak{C}(M)} \{\lambda\} \times \Cont(M,\lambda)
\subset \mathfrak{C}(M) \times \Diff(M).
$$
We also consider its subset
$$
\mathfrak{Cont} (M)_0 : = \bigcup_{\lambda \in \mathfrak{C}(M)} \{\lambda\} \times \Cont(M,\lambda)_0
\subset \mathfrak{C}(M) \times \Diff(M)_0
$$
where $\Cont(M,\lambda)_0$ (resp. $\Diff(M)_0$) is the identity component of $\Cont(M,\lambda)$
(resp. $\Diff(M)$).
\end{defn}

By restricting to the set of contact pairs $(\lambda,\psi)$ with $\ker \lambda = \xi$
for a fixed contact structure $\xi$ on $M$,  we define
the subset $\mathfrak{C}(M,\xi) \subset \mathfrak{C}(M)$ to be the one
consisting such contact forms. We then define the following small phase space.

\begin{defn}[Small phase space]\label{defn:small}
Let $(M,\xi)$ be a coorientable contact manifold.
We call the set of contact pairs $(\lambda,\psi)$ associated to the given contact structure $\xi$
the \emph{small phase space} associated to the contact structure $\xi$.   
We denote its subset consisting of $(\lambda,\psi)$ with 
$\lambda \in \mathfrak{C}(M,\xi)$ for a given contact structure $\xi$ by
\be\label{eq:small-contact-pair-xi}
\mathfrak{Cont}(M,\xi) 
= \bigcup_{\lambda \in \mathfrak{C}(M,\xi)} \{\lambda\} \times \Cont(M,\xi).
\ee
\end{defn}

The following is a folklore but we cannot locate its proof in the literature. For
readers' convenience and for completeness' sake, we give its proof in Appendix.

\begin{lem}\label{lem:Tlambda} The set $\mathfrak{C}(M)$ carries
the Frechet submanifold structure modeled by
$$
(\R^{2n})^* \times \text{\rm Symp}^2(\R^{2n})
$$
where $\text{\rm Symp}^2(\R^{2n})$ is the set of symplectic bilinear forms.
\end{lem}

Since the following discussion is easier for the big phase space case, we will focus on the 
case of small phase spaces and just leave the corresponding 
obvious statements of the big phase space to the readers.

We consider the canonical inclusion map
\be\label{eq:iotasm}
\iota_\xi: = \iota|_\xi : \mathfrak{C}(M,\xi) \hookrightarrow \Omega^1(M).
\ee
Note that the domain depends on $\xi$ but the map is just the restriction of the canonical inclusion
\eqref{eq:iota}. We also recall readers that
the subset $\mathfrak C(M,\xi) \subset \Omega^1(M)$ is \emph{neither a linear subspace nor
an open subset of $\Omega^1(M)$}, but a principal homogeneous space of the group $C^\infty(M,\R_+)$.
In particular we have
$$
\mathfrak C(M,\xi) \cong C^\infty(M,\R_+),
$$
\emph{but not canonically}. On the other hand 
in the tangent space level, we have a canonical 
$$
T\mathfrak C(M,\xi) \to \mathfrak{C}(M,\xi) \times C^\infty(M,\R) ; \quad (\lambda,h\lambda) \mapsto 
(\lambda, h)
$$
that gives rise to the commutative diagram
$$
\xymatrix{
T\mathfrak C(M,\xi) \ar[r]\ar[d]  &\mathfrak C(M,\xi) \times C^\infty(M,\R) \ar[d] &\\
\mathfrak C(M,\xi) \ar[r]^{=} & \mathfrak C(M,\xi)
}
$$
which is equivariant under the canonical action of $C^\infty(M,\R_+)$ on $T\mathfrak C(M,\xi)$
induced from that of $C^\infty(M,\R_+)$ and the diagonal action thereof on 
$ \mathfrak{C}(M,\xi) \times C^\infty(M,\R)$.

We denote by $\mathfrak{C}^+(M,\xi)$ is the set of positive contact forms of
the cooriented contact manifold $(M,\xi)$. Then we have
$$
\mathfrak{C}(M,\xi) = \mathfrak{C}^+(M,\xi) \sqcup \mathfrak{C}^-(M,\xi); \quad 
\mathfrak{C}^-(M,\xi): = - \mathfrak{C}^+(M,\xi).
$$

\begin{prop}\label{prop:Tlambda-xi}
Assume $M$ is orientable and a coorientable contact structure $\xi$ is given.
Then we have the following:
\begin{enumerate}
\item $\mathfrak{C}(M,\xi)$ 
is a smooth submanifold of $\mathfrak{C}(M)$ and so carries a
natural Frechet manifold structure induced by the inclusion map \eqref{eq:iotasm}. 
\item 
The tangent space of $\mathfrak C(M,\xi)$ at $\lambda$ is the subspace of $\Omega^1(M)$ given by
\be\label{eq:Tlambda-CMxi}
T_\lambda \mathfrak{C}(M,\xi)
= \{\alpha \in \Omega^1(M) \mid \alpha = h \lambda, \, h \in C^\infty(M,\R)\}.
\ee
\end{enumerate}
\end{prop}
\begin{proof} 
Now we provide the description \eqref{eq:Tlambda-CMxi} of the tangent space in the rest of the proof.
We  note that the set $C^\infty(M,\R_+)$ is not a linear space but a multiplicative 
unital group, and have the natural logarithm map
$$
\log: C^\infty(M,\R_+) \to C^\infty(M,\R); \quad f \mapsto \log f
$$
to its tangent space at $1$ which can be canonically identified with $C^\infty(M,\R)$ via the 
correspondence 
 $$
 g \mapsto \{e^{t g}\}_{-\epsilon < t < \epsilon} 
 $$
Then it follows that   $ \mathfrak{C}^+(M,\xi)$ is a principal homogeneous space of the 
Lie group $C^\infty(M,\R_+)$. In particular any element $\lambda$ thereof
can be uniquely written as $\lambda = f\, \lambda_0$ for some $f > 0$ if we fix a reference
contact form $\lambda_0$.  This in particular gives rise to smooth one-to-one correspondence  
\be\label{eq:small-Upsilon-lambda0}
\CI_{\lambda_0} : C^\infty(M,\R_+) \to \mathfrak C^+(M,\xi) 
\ee
given by 
$$
\CI_{\lambda_0}(f) : =  f \lambda_0
$$
(which obviously depends on the choice of the reference form $\lambda_0$).  
The map in particular identifies the Frechet manifold $\mathfrak C^+(M,\xi)$ 
with $C^\infty(M,\R_+)$.

Therefore any germ of curves
$\{\lambda_t\}_{-\epsilon < t < \epsilon}$ at $\lambda$ can be written as 
$$
t \mapsto e^{g_t}\lambda,
$$
and a first variation or a tangent vector at $\lambda$ as
$$
\delta \lambda = \frac{d e^{g_t}}{d t}\Big|_{t = 0} \cdot \lambda = \frac{d g_t}{d t}\Big|_{t = 0} \cdot \lambda
$$
By setting $h: = \frac{dg_t}{d t}\Big|_{t = 0}$, we have shown \eqref{eq:Tlambda-CMxi}
as a set. We summarize the above discussion into the following commutative diagram
\be\label{eq:diagram1}
\xymatrix{
C^\infty(M,\R_+) \ar[r]^{f \mapsto \log f} \ar[dr]^{f \mapsto f\lambda}  
 &C^\infty(M,\R) \ar[d]^{g \mapsto e^g \lambda} \ar[r]^{g \mapsto g\, \lambda} &T_\lambda \Omega^1(M) \ar[d]^{g\lambda
 \mapsto e^g \lambda}\\ 
& \mathfrak{C}^+(M,\xi)  \ar[r]_{i}  & \Omega^1(M) &
}
\ee
where the map $i$ is the canonical inclusion map.
This provides the description of the tangent space of $\mathfrak{C}(M,\xi)$.

Statement (1) for $\mathfrak{C}(M,\xi)$ then directly follows from the implicit function theorem \cite{sergeraert}
by observing that the assignment 
$$
(\lambda,\psi) \mapsto (\ker \lambda,\ker \psi^*\lambda)
$$
as a map $\mathfrak{Diff}(M)$ to $\Gamma(\Gr_{2n}(M)) \times \Gamma(\Gr_{2n}(M))$ is 
transverse to the diagonal 
$$
\Delta_{\Gamma(\Gr_{2n}(M))} \subset \Gamma(\Gr_{2n}(M)) \times \Gamma(\Gr_{2n}(M))
$$
the proof of which easily follows from the ampleness, Lemma \ref{lem:Ham-ampleness}.
\end{proof}
 
The following is an immediate corollary of Proposition \ref{prop:Tlambda-xi}.
\begin{cor}\label{cor:small-smooth}
Both $\mathfrak{Cont}(M)$ and $\mathfrak{Cont}(M,\xi)$ are
smooth submanifolds of the product
$$
\mathfrak{Diff}(M) = \Omega^1(M) \times \Diff(M)
$$
in the Frechet sense. Furthermore  their tangent spaces at $(\lambda, \psi)$ are given by
$$
T_{(\lambda,\psi)} \mathfrak{Cont} (M)  \cong \Omega^1(M) 
\times \mathfrak{X}(M,\xi_\lambda)
$$
and 
$$
T_{(\lambda,\psi)} \mathfrak{Cont} (M,\xi) \cong 
 C^\infty(M,\R)\cdot \{\lambda\} \times \mathfrak{X}(M,\xi) 
 $$
respectively.
\end{cor}
\begin{proof} Since the case of $\mathfrak{Cont}(M)$ is easier, we focus on the case 
$\mathfrak{Cont}(M,\xi)$. Recall the definition
$$
\mathfrak{Cont}(M,\xi) = \bigcup_{\lambda \in \mathfrak{C}(M,\xi)} \{\lambda\} \times \Cont(M,\xi)
$$
which is a smooth fiber bundle over $\mathfrak{C}(M,\xi)$.
Proposition \ref{prop:Tlambda-xi} equips $\mathfrak{C}(M,\xi)$ with a natural smooth structure, and the fiber
of the bundle is canonically the same as $\Cont(M,\xi)$ independent of the choice of 
$\lambda \in \mathfrak{C}(M,\xi)$.
The corollary for $\mathfrak{Cont}(M,\xi)$ immediately follows from this observation.
\end{proof}

\section{Variations of $\lambda$-Reeb vector field}

For our purpose, it is crucial to analyze the way how the Reeb vector field $R_\lambda$,
varies under the variation of contact form $\lambda$. This consideration is a new important aspect of
contact Hamiltonian dynamics that is not shared by the symplectic Hamiltonian dynamics.
We believe that this variational formula applied to general contact Hamiltonian 
vector fields $X_{(H;\lambda)}$ will be useful in the future study of contact Hamiltonian
dynamics. 

Such a formula  has been implicitly studied in relation to the deformation of moduli spaces of 
pseudoholomorphic curves on the symplectization before (see \cite[Section 7]{meilin-yau} for example), 
but we could not locate a reference containing the formula such as \eqref{eq:Yalpha=} below. 

We consider and denote the variation of $\lambda$ by
$$
\alpha = \delta \lambda: = \frac{d}{dt}\Big|_{t = 0} \lambda_t
$$
for the germ of paths $\{\lambda_t\}_{-\epsilon t < \epsilon}$ at $\lambda_0 = \lambda$.
By definition,  the variation of $R_\lambda$ associated to the germ of curves $s \mapsto \lambda_s$ 
 at $\lambda|_{s=0} = \lambda_0$ with $\dot \lambda|_{s = 0} = \alpha$ is  given by
 \be\label{eq:Yalpha-defn}
 Y_\alpha:  = (\delta_\lambda R_\lambda)(\alpha) : = \frac{d}{dt}\Big|_{t=0} R_{\lambda_t}.
 \ee
 Then $Y_\alpha$  is uniquely determined by the equation
\be\label{eq:Yalpha}
\begin{cases}
Y_\alpha \intprod \lambda = - R_\lambda \intprod \alpha\\
Y_\alpha \intprod d\lambda = - R_\lambda \intprod d\alpha.
\end{cases}
\ee

The following is a very suggestive formula
for the first variation of the Reeb vector field $R_\lambda$ \emph{inside the small phase space.}

\begin{prop}\label{prop:Yalpha} Let $\lambda \in \mathfrak{C}(M)$.
Suppose $\alpha = h \, \lambda$. Then 
\be\label{eq:Yalpha}
Y_\alpha = X_{(h;\lambda)}
\ee
\end{prop}
\begin{proof} 
Substituting the expression $\alpha = h \lambda$ into \eqref{eq:Yalpha-defn},
we obtain
\be\label{eq:Y-lambda=}
Y_\alpha \intprod \lambda = - R_\lambda \intprod \alpha = -h
\ee
On the other hand, we compute
$$
d\alpha = h d\lambda + dh \wedge \lambda.
$$
Therefore, from the second equation of \eqref{eq:Yalpha-defn}, we obtain
$$
Y_\alpha  \intprod  d\lambda = -R_\lambda \intprod d\alpha
=   - R_\lambda \intprod (h d\lambda + dh \wedge \lambda)
= dh - R_\lambda[h] \, \lambda.
$$
By adding the two, we have derived
$$
\begin{cases}
Y_\alpha \intprod \lambda = - R_\lambda \intprod \alpha = -h,\\
Y_\alpha  \intprod  d\lambda = dh - R_\lambda[h] \, \lambda.
\end{cases}
$$
By the defining equation \eqref{eq:deltaXHlambda} applied to $H = h$,
the proof is finished.
\end{proof}

The following is an immediate corollary which is the small space analog to Proposition \ref{prop:Yalpha}.
We mention that if $\lambda \in \mathfrak{C}(M,\xi)$, the variation $\alpha = h \, \lambda$
is tangent to $\mathfrak{C}(M,\xi)$.

\begin{cor}\label{cor:Yalpha-small} Let $\lambda \in \mathfrak{C}(M,\xi)$ and
let $\alpha =  h\, \lambda$. Then we have
\be\label{eq:Yalpha=}
Y_\alpha: = \delta_\lambda(R_\lambda)(\alpha) =  X_{(h;\lambda)}^\pi - h R_\lambda = X_{(h;\lambda)}.
\ee
\end{cor}

\begin{rem}
More generally, we can derive the formula for the first variation of the general contact Hamiltonian vector field $X_{(H;\lambda)}$ by varying 
the defining equation of which we recall from \eqref{eq:XlambdaH} is
\be\label{eq:deltaXHlambda}
\begin{cases}
X \intprod \lambda = -H \\
X \intprod d\lambda = dH - R_{\lambda}[H] \lambda.
\end{cases}
\ee
Since we will not need this general variations, we do not discuss it in the present paper.
\end{rem}

\part{On the big phase space}
\label{part:big}

In this part, we will consider the contact pair $(\lambda,\psi)$
 allowing the contact distribution $\xi := \ker \lambda$ to vary. We call the set of such
pairs the \emph{big phase space} of contact pairs. (See \cite{do-oh:reduction} for the 
advent of this usage of the term big and small phase spaces.)

\section{Moduli space of pointed Reeb trajectories}
\label{sec:moduli}

As mentioned in the introduction, solvability of the cohomological equation 
$$
R_\lambda[f] = u
$$
is closely tied to the existence of strict contact flow of the contact form $\lambda$
which would violate the transversality of the intersection of Reeb trajectories against the 
discriminant in the jet level of any finite order. 
This necessitates us to study the space of solutions of Reeb trajectories as a whole. We will
regard its study as a moduli problem the formulation of which is now in order.

A (complete) Reeb trajectory issued at $x_0$  is a map $\gamma: \R \to M$ that satisfies the  ODE
\be\label{eq:Reeb-ODE}
\begin{cases}
\dot \gamma(t) = R_\lambda (\gamma(t))\\
\gamma(0) = x_0.
\end{cases}
\ee
We regard the problem of solving this ODE as a moduli problem. 
It follows that  any solution $\gamma$ of \eqref{eq:Reeb-ODE} satisfies the derivative bound
\be\label{eq:derivative-bounds}
\left|\nabla_t\dot \gamma \right| \leq  \|D R_\lambda\|_{C^0(M)} |\dot \gamma(t)|
\ee
for all $t \in \R$ where $\|D R_\lambda\|=\|D R_\lambda\|_{C^0(M)}$ is the operator norm of the
covariant derivative $\nabla R_\lambda: TM \to TM$ 
$$
 \|D R_\lambda\| = \max_{x \in M} |\nabla R_\lambda(x)|.
$$
Similarly, we denote by $D^\ell R_\lambda$ the $k$-th covariant derivative of $R_\lambda$.
By the standard boot-strap argument by differentiating the equation \eqref{eq:Reeb-ODE},
together with $\|R_\lambda(x)\|_g= 1$ for any triad metric $g$,
we immediately achieve the following a priori pointwise $C^{\ell +1}$ estimate of 
any solution $\gamma$ thereof.

\begin{lem}\label{lem:ODE-estimate} 
 For any solution $\gamma$ of \eqref{eq:Reeb-ODE}, there exists a 
 polynomial  $M_\ell = M_\ell(r_1, \cdots, r_\ell)$ of $\ell$ variables $r_i $ with positive coefficients of at most degree $\ell$
 depending only on $\lambda$ such that
$$
\|\nabla_t^{\ell +1} \dot \gamma\|_{C^0} \leq M_k\left(\{ \|D^i R_\lambda\|\}_{0 \leq i\leq \ell}\right).
$$
In particular, the linearization operator 
$$
\xi \mapsto \frac{D\xi}{dt} - DR_\lambda(\gamma)(\xi)
$$
is a first order linear elliptic operator on $\R$.
\end{lem}

\begin{rem} This kind of inequality, especially the usage of a polynomial $M_k$ with positive coefficients,
is a common practice in the literature that involves the bootstrap $C^k$ estimates appearing, e.g.,
in the simpleness study of the group of $C^k$ diffeomorphisms 
and in the study of a priori estimates for nonlinear elliptic partial differential equations.
(See \cite{mather,mather2}, \cite{epstein:simplicity} for the first \cite{oh-wang:CR-map1} for the latter, for example.)
The above lemma is a version of such estimates applied to the nonlinear ODE of Reeb trajectories.
\end{rem}

\subsection{Off-shell framework of the moduli space of Reeb trajectories}
\label{subsec:offshell-framework}

Motivated by the Gromov-Witten-Floer theory and Lemma \ref{lem:ODE-estimate}, we  
regard the assignment $\Upsilon_\lambda$ defined by
\be\label{eq:DC-lambda}
\Upsilon_\lambda(\gamma): =   \dot \gamma - R_\lambda(\gamma)
\ee
as a parameterized smooth section of the (infinite dimensional) vector bundle
and consider its covariant linearization.  The details are now in order.

We denote by $\CP_M^\R$ the set of smooth curves $\gamma: \R \to M$, i.e.,
\be\label{eq:CPMR}
\CP_M^\R: = \{ \gamma: \R \to M \mid \text{\rm smooth}\}.
\ee
For each given $\gamma \in \CP_M^\R$, we consider the set
$\Gamma(\gamma^*TM)$ consisting of smooth sections of the pull-back bundle $\gamma^*TM \to \R$,
and form the union
\be\label{eq:CEMR}
\CE_M^\R: = \bigcup_{\gamma \in \CP_M^\R} \{\gamma\} \times \Gamma(\gamma^*TM)
\ee
as a fiber bundle over $\CP_M^\R$.
In regard to the \emph{precise} off-shell framework of the relevant Fredholm theory, 
we mention that the section $\gamma \mapsto \dot \gamma -R_\lambda(\gamma)$ is 
a nonlinear \emph{elliptic} operator, but \emph{its domain  $\R$ is non-compact $\R$} and
\emph{we do not impose any prescribed asymptotic condition} on the Reeb trajectories, except
to the effect of the asymptotic behavior of the trajectories arising from the fact that
the target $M$ is compact. Because of no asymptotic behavior imposed,
we should always work with  compact open $C^k$
topology on $\CP_M^\R$, i.e., the \emph{weak} $C^k$ topology for $2 \leq k < \infty$  
and take that of the fiber 
$$
\CE_M^\R|_\gamma = \Gamma(\gamma^*TM)
$$
in weak $C^{k-1}$ topology. Then it is well-known that the projection
\be\label{eq:CECP}
\CE_M^\R \to \CP_M^\R
\ee
is a smooth vector bundle 
over a Banach manifold $\CP_M^\R$. (See \cite{hirsch}, \cite{kriegl-michor}.)
Furthermore, the boot-strap inequality \eqref{eq:derivative-bounds} implies that
the assignment $\Upsilon_\lambda$ defines a smooth section of the vector bundle
in the \emph{weak} $C^\infty$ topology. Furthermore its zero set is 
precisely the set of $\lambda$ Reeb trajectories. We denote the zero set by
\be\label{eq:CMR-lambda}
\CM^\R (\lambda;M) := (\Upsilon_\lambda)^{-1}(o_{\CE_M^\R}),
\ee
and its parameterized version
$$
\CM_{\mathfrak C}^\R: = \bigcup_{\lambda \in \mathfrak{C}(M)} \{\lambda\} \times \CM^\R(\lambda;M).
$$
Once this standard well-known function theoretic 
fact being mentioned,  the current setting of the first order ODE is
 \emph{analytically} much easier to handle than those of either 
pseudoholomorphic curves  \cite{oh-zhu:ajm}, \cite{wendl:super-rigidity} or
of contact instantons from \cite{oh:1jet-evaluation}. 
Therefore we will focus on the essential geometric and computational parts
of the proofs in the rest of the paper.

\subsection{Mapping transversality}

A formulation of the following mapping transversality is standard.
We introduce the parameterized version 
\be\label{eq:CEMR-C}
\CE_\mathfrak{C}^\R: =\mathfrak{C}(M) \times \CE_M^\R
 =  \bigcup_{(\lambda,\gamma) \in \mathfrak{C}(M) \times \CP_M^\R} 
\{\lambda\} \times \Gamma(\gamma^*TM)
\ee
over $\mathfrak{C}(M) \times \CP_M^\R$ of the infinite dimensional vector bundle
$\CE_M^\R \to \CP_M^\R$. (See Lemma \ref{lem:ODE-estimate} and the discussion afterwards.)
We regard the map $\Upsilon:\mathfrak{C}(M) \times \CP_M^\R \to \CE_\mathfrak{C}^\R$ given by
\be\label{eq:Upsilon}
\Upsilon(\lambda,\gamma) = \dot \gamma - R_\lambda(\gamma)
\ee
as a section of the vector bundle $\CE_\mathfrak{C}^\R$. Here we simply write
 $R_\lambda \circ \gamma$ as $R_\lambda(\gamma)$.

Using this off-shell framework, we prove the following parameterized transversality theorem.
The main purpose of proving such transversality is similar to that of the Gromov-Witten-Floer theory,
 that is,  to equip the moduli space
with the smooth structure that will be used in the study  of the 1-jet evaluation transversality
in the next subsection.

\begin{thm}\label{thm:mapping-transversality} Consider the zero set $\CM^\R_{\mathfrak{C}}$
of $\Upsilon$. Then
\begin{enumerate}
\item The parameterized moduli space $\CM^\R_{\mathfrak{C}} \subset \mathfrak{C}(M) \times \CP_M^\R$
is a smooth submanifold.
\item The projection map $\Pi: \CM^\R_{\mathfrak{C}} \to \mathfrak{C}(M)$
is a nonlinear Fredholm map of index $2n+1 = \dim M$.
\end{enumerate}
In particular, for any regular value $\lambda$ of $\Pi$, the preimage
$$
\CM^\R(\lambda;M): = \Pi^{-1}(\lambda)
$$
carries a natural smooth structure.
\end{thm}
\begin{proof} As usual, we
denote by $\frac{D}{dt} = \nabla_{\dot \gamma}$
 the covariant derivative with respect to the Levi-Civita connection $\nabla$ of the triad metric.
 
It follows from  Lemma \ref{lem:ODE-estimate} of Section \ref{sec:moduli} that 
 the map $\Upsilon_\lambda:\gamma \mapsto \dot \gamma - R_\lambda(\gamma)$ is an 
nonlinear elliptic operator, i.e., its linearization map 
$$
D\Upsilon_\lambda(\gamma)  : \Gamma_c(\gamma^*TM)  \to \Gamma_c(\gamma^*TM) 
$$
is a first-order linear ordinary differential  operator given by
$$
D\Upsilon_\lambda(\gamma) = \frac{D}{dt} - DR_\lambda(\gamma)
$$
where $\Gamma_c(\gamma^*TM)$ is the space of 
\emph{compactly supported} smooth sections of $\gamma^*TM$. 
Furthermore the zero set of $\Upsilon_\lambda$
is precisely the set of $\lambda$ Hamiltonian trajectories \eqref{eq:CMR-lambda}
$$
\CM^\R (\lambda;M) := (\Upsilon_\lambda)^{-1}\left(o_{\CE_M^\R}\right) =: \Upsilon_\lambda^{-1}(0).
$$
The following is the basis of the generic mapping transversality result.  As in the Gromov-Witten-Floer theory,
 we employ the standard argument of Fredholm alternative to prove the vanishing of  the $L^2$-cokernel.
For this purpose, we employ the \emph{triad metric} associated to the contact triads
$(M,\lambda,J)$ mentioned in Subsection \ref{subsec:triad-connection}.

We start with the following ampleness lemma.

\begin{lem} \label{lem:ampleness-W0} Assume $c = H(\gamma(0))$
is a regular value of $H$ with $H^{-1}(c) \neq \emptyset$. Then the subspace
$$
W_{\gamma(0)}: = \span_\R \left\{Y_\alpha(0) \in T_{\gamma(0)} M \, \Big|\, 
\alpha \in T_\lambda \mathfrak{C}(M) \right\}
$$
is transverse to the level set $H^{-1}(c)$. In fact, it is enough to
consider the variation $\alpha$ of the form $\alpha = h \, \lambda$ for $h \in C^\infty(M)$.
\end{lem}
\begin{proof} Recall from Section \ref{sec:phasespace} that we have  the canonical
identification $T_\lambda \mathfrak{C}(M) \cong \Omega^1(M)$.
We just consider variation of the form $\alpha = h\, \lambda$ with 
arbitrary $h \in C^\infty(M)$, and utilize the precise formula  of
 $Y_\alpha = \delta_\lambda(R_\lambda)(\alpha)$ given in \eqref{eq:Yalpha=}
$$
Y_\alpha = X_{(h;\lambda)}.
$$
It then obviously follows from this formula and  the ampleness lemma, Lemma \ref{lem:Ham-ampleness}, that
there is some $\alpha = h\, \lambda$ such that
$$
Y_\alpha(\gamma(0))  \pitchfork H^{-1}(c)
$$
This finishes the proof.
\end{proof}

We now state basic Fredholm properties of the parameterized 
linearized operator $D\Upsilon(\lambda,\gamma)$.

\begin{prop} Take the aforementioned completion of the  map
$\Upsilon : \CP_\mathfrak{C}^\R \to \CE_\mathfrak{C}^\R$  given in \eqref{eq:Upsilon} as a parameterized section
of the vector bundle $\CE_\mathfrak{C}^\R \to \CP_\mathfrak{C}^\R$.
The covariant linearization map, which is a fiberwise Fredholm operator,
$$
D\Upsilon(\lambda,\gamma): T_{(\lambda,\gamma)}\CP^\R_{\mathfrak{C}} \to \Gamma(\gamma^*TM)
$$
is surjective at all points
$$
(\lambda, \gamma) \in \Upsilon^{-1}(0) \subset \mathfrak{C}^{\text{\rm reg}}(M) \times \CP_M^\R.
$$
In particular the zero set $\Upsilon^{-1}\left(o_{\CE_{\mathfrak C}^\R}\right)$ 
is an infinite dimensional smooth manifold, which, as a set,  does not depend on the choice
of  $k \geq 2$ appearing in the $C^k$ completion employed.
\end{prop}
\begin{proof} By covariantly linearizing the expression $\dot \gamma - R_\lambda(\gamma)$
along $\delta(\lambda,\gamma) = (\alpha,\eta)$  at $(\lambda,\gamma)$, we derive
\be\label{eq:1st-variation}
D\Upsilon(\lambda,\gamma)(\alpha, \eta) = \frac{D\eta}{dt} - DR_\lambda(\eta) 
-\delta_\lambda(R_\lambda)(\alpha)
\ee
with $\eta \in \Gamma_c(\gamma^*TM)$.  

We need to analyze the effects of the last variation. We have directly
checked that the variational vector field
 \be\label{eq:Zalpha-defn}
 \delta_\lambda(R_\lambda)(\alpha)=: Y_\alpha \in \Gamma(\gamma^*TM)
 \ee
 is determined by its defining equation \eqref{eq:Yalpha}.

We now show the surjectivity of the completed operator of $D\Upsilon(\lambda,\gamma)$, 
to the given Banach completion of $\Gamma(\gamma^*TM)$
via the application of the Fredholm alternative by considering the $L^2$-cokernel
the detail of which is now in order.

We first mention that  the ellipticity of the operator $D\Upsilon_\lambda(\gamma)$,
the restriction of $D\Upsilon_\lambda(\gamma)$ to the fiber $\Pi^{-1}(\lambda)$,
proves closedness of its image 
therein. 
Suppose $Z \in \Gamma(\gamma^*TM)$ satisfies
$$
\int_\R \left\langle D\Upsilon(\lambda,\gamma)(\alpha, \eta) , Z\right\rangle \, dt = 0
$$
for all compactly supported smooth test functions $(\alpha,\eta)$. Then
the  integral equation can be split into  the following two equations:
\bea
\int_\R \left\langle \frac{D\eta}{dt} - DR_\lambda(\eta)  , Z \right\rangle \, dt & = & 0, 
\label{eq:variation-gamma}\\
\int_\R \left\langle Y_\alpha , Z\right\rangle \, dt&= & 0, \label{eq:variation-lambda}
\eea
for all compactly supported $\eta$, $\alpha$  respectively. 
From the first equation, we derive the adjoint equation
\be\label{eq:adjoint}
-\frac{DZ}{dt} - DR_\lambda(Z) = 0
\ee
by integration by parts, which is a linear first-order ODE. 
Therefore to show $Z = 0$, it is enough to show $Z(0) = 0$ at a point $0$. 

For this purpose, we now utilize the second component \eqref{eq:variation-lambda} and
the  ampleness lemma stated in Lemma \ref{lem:ampleness-W0} as follows.
We note that since $h_\alpha: M \to \R$ can be chosen arbitrarily, 
we can take $h_\alpha$ so that it is supported at  $\gamma(0)$ and that it approximates 
the Dirac-delta measure $ v \, \delta_{\gamma(0)}$ for $u \in T_{\gamma(0)}M$.
Then the integral becomes $\langle v,  Z(0)\rangle$. Furthermore Lemma \ref{lem:Ham-ampleness}
shows that we can choose any vector $v \in T_{\gamma(0)}M$. Then
the standard argument using a cut-off function proves finish the proof of $Z(0) = 0$
and hence $Z = 0$.

Since $D\Upsilon(\lambda, \gamma)$ is a closed operator, the Hahn-Banach theorem proves that
$\Image D\Upsilon(\lambda, \gamma) = \Gamma(\gamma^*TM)$, i.e., that
 $D\Upsilon(\lambda, \gamma)$ is surjective. This finishes
the proof of the proposition.
\end{proof}

Once the surjectivity stated in the above proposition is established, Theorem \ref{thm:mapping-transversality}
follows from the implicit function theorem \cite{sergeraert}.
\end{proof}

Denote by 
\be\label{eq:CC-map}
\mathfrak{C}^{\text{\rm map}}(M)
\ee
the set of regular values of the projection $\Pi$ in Theorem \ref{thm:mapping-transversality}.

\begin{rem}
\emph{Analytically}, our transversality study is a standard practice in the parameterized Fredholm analysis 
which we apply to the moduli problem of 
the Reeb trajectory equation $\dot x = R_\lambda(x)$ 
under the perturbation of contact forms $\lambda$. In this regard, the ellipticity stated in Lemma \ref{lem:ODE-estimate}
of the equation of nonlinear ODE $\dot \gamma = R_\lambda(\gamma)$ is fundamental to have a 
Fredholm completion of the linearized operator $D\Upsilon_\lambda(\gamma)$ in a suitable Banach
function space.
\end{rem}

\begin{rem}\label{rem:Taubes} As well-known to the experts,
there are two ways to derive the $C^\infty$ result from those of finite $C^\ell$ results. 
The way we follow in the present paper is the so-called 
Taubes' approach.  In short, this approach is first to prove the genericity result for each finite regularity 
class $C^\ell$, say $\ell \geq 2$, and then take the countable intersection 
$$
\bigcap_{\ell \geq 2}^\infty \mathfrak{C}_\ell^{\text{\rm np}}(M) =: \mathfrak{C}_{\text{\rm Ta}}^{\text{\rm np}}(M).
$$
 (see  \cite{mcduff-salamon-quantum}, \cite[p.343-344]{oh:book2} for complete explanation of Taubes' approach
 in the study of pseudoholomorphic curves,  for example).
 Another approach, which is not needed  in the present paper  (but can replace Taubes' approach),
 is the so-called Floer's  $C^\varepsilon$-norm approach \cite{floer:unregularized} of directly working 
 with $C^\infty$ setting (locally in the given moduli space).
 See \cite{oh-savelyev:strict-contact} for the explanation of the relevant off-shell framework in the context of
 generic study of strict contactomorphisms following \cite{floer:unregularized}, \cite{wendl:super-rigidity}.
 \end{rem}

\section{Pointed moduli space and its evaluation maps}
\label{sec:transversality-statements}

In the present section, we will need to study the 
general  Reeb dynamics 
$$
\dot x = R_\lambda(x)
$$
associated to the contact form $\lambda$. We will
examine the intersection property of Reeb trajectories with the level set
$$
\Sigma_H(c) = H^{-1}(c)
$$
of $H$ for a regular value $c$ of $H$. \emph{Provided $H$ is non-constant}, there exists 
a regular value $c \in \R$ with $H^{-1}(c) \neq \emptyset$.  

We remark that the evaluation map
\be\label{eq:ev0}
\ev_0: \CM^\R (\lambda;M) \to M; \quad \ev_0(\gamma) := \gamma(0)
\ee
defines a one-to-one correspondence by the existence, uniqueness and continuity of 
solutions of ODE applied to the initial value problem \eqref{eq:Reeb-ODE} and its derivatives.

We also need to consider its pointed versions of the above moduli space whose definition is now in order.
Let 
$$
0 < t_1 < t_2 < \cdots < t_k  
$$
 be  $k$ marked points in $\R$. We denote the set of such points by
$$
\operatorname{Conf}_{k}(\R_+): = \{(t_1, \cdots, t_k) \in \R^{k} \mid 0 < t_1 < \cdots < t_k  \}
$$
which is an open subset of $\R^{k}$.
Then we define the set
\beastar
\CP_{M;k}^\R & = & \CP_M^\R \times \operatorname{Conf}_{k}(\R_+)\\
& = & \{ (\gamma, (t_1, \cdots, t_k)) \in \CP_M^\R \times \R^{k} \mid 0 < t_1 < \cdots < t_k\}
\eeastar
and consider the natural evaluation maps $\ev_i$ given by $\ev_i(\gamma,\vec t) = \gamma(t_i)$.

We denote by
$$
\CM_k^\R (\lambda;M)
$$
the subset of $\CP_{M;k}^\R$ consisting of the pairs 
$$
(\lambda,(t_1, \cdots, t_k))
$$
with $\gamma$ being a Reeb trajectory of $\lambda$. 
We write $\vec t = (t_1, \cdots, t_k)$ and consider the natural evaluations maps
\be\label{eq:evi}
\ev_i: \CM_k^\R (\lambda;M) \to M, \quad \ev_i(\gamma,\vec t) = \gamma(t_i).
\ee
We also consider the product evaluation map
\be
\ev_+ : =  \ev_1 \times \cdots \times \ev_k: \CM_k^\R (\lambda;M)  \to M^k. \label{eq:ev+}
\ee

Next we will need to consider the one-jet evaluation maps of $\ev_+$ 
$$
\dot{\ev}_+: \CM_k^\R(\lambda; M) \to (TM)^k
$$
where $\dot{\ev}_+$ is the derivative map defined by
\be\label{eq:1jet-evaluation}
\dot{\ev}_+ (\gamma, \vec t)  = \left(\dot \gamma(t_1) \cdots \dot \gamma(t_k)\right)
\in T_{\gamma(t_1)}M \oplus \cdots \oplus T_{\gamma(t_k)}M.
\ee
We also denote by $\dot{\ev}_i$ the individual map
\be\label{eq:1jet-evaluation-i}
\dot{\ev}_i(\gamma, \vec t)  = \dot \gamma(t_i)
 \in \gamma^*TM|_{t_i} = T_{\gamma(t_i)}M
\ee
for each $i=1, \ldots, k$. Obviously we can define $\dot{\ev}_i$ for any pair $(\gamma,\vec t)$ in $\CP_{M;k}^\R$
since $\gamma$ is assumed to be smooth.

We  form the union
\be\label{eq:MkR}
\CM_{\mathfrak{C};k}^\R := \bigcup_{\lambda \in \mathfrak{C}(M)} \{\lambda\} \times \CM_k^\R(\lambda;M).
\ee
Here $\CM_k^\R(\lambda;M)$ is the $k$-marked moduli space of the 
$\lambda$ Hamiltonian  trajectories. By definition, it is a subset of 
the trivial product bundle 
$$
\CP^\R_{\mathfrak{C};k} = \mathfrak{C}(M) \times \CP_{M;k}^\R \to \mathfrak{C}(M).
$$
We also consider the parameterized evaluation maps
$$
\Ev_i:\CM^\R_{\mathfrak{C};k} \to M
$$
defined by
 $$
 \Ev_i( \lambda,\gamma,\vec t) = \gamma(t_i), \quad \vec t = (t_1, \cdots, t_k)
  $$
and their product
 $$
  \Ev_+:\CM^\R_{\mathfrak{C};k} \to  M^k.
 $$

\section{Parameterized one-jet $H$-evaluation transversality}
\label{sec:1jet-evaluation}

In this section, we carry out a study of a parameterized 1-jet transversality 
which will be essential for our proof of the main theorem of the present paper.

The overall scheme of our transversality study of parameterized maps over the set of $\lambda$s 
appearing in the transversality statements
mimics that of various generic transversality statements
of the Gromov-Witten-Floer theory for the parameterized moduli space of solutions $(u,J)$ of  
the pseudoholomorphic curve equation $\delbar_J u = 0$ under the choice of $J$, which
one regards as the zero set of the map $\Phi(J,u): = \delbar_J u$.
(See \cite{floer:unregularized}, \cite{mcduff-salamon-quantum}, \cite{oh:book1}, for example.)
However the formulation of a proper off-shell framework that enables us to apply
the Sard-Smale theorem is not at all clear at first sight. 

\begin{rem}
Analytically much harder versions of the evaluation transversality 
 of pseudoholomorphic curves and of contact instantons 
 in the spirit of the present paper were previously given in \cite{oh-zhu:ajm,oh:1jet-evaluation}, 
 \cite{wendl:super-rigidity}, and \cite{oh:contacton-transversality,oh:1jet-evaluation} respectively.
The current case of the moduli space of solutions of (elliptic) ODE 
is a much simpler case than those therein
 which deal with nonlinear elliptic PDEs, \emph{once a proper off-shell framework}
 is found (See Section \ref{sec:wrap-up}, especially 
 Remark \ref{rem:table} and the comparison table therein.)
 \end{rem}

We introduce the main evaluation maps that 
play fundamental role in our proof of the main theorem of the present paper.

\begin{defn}\label{defn:H-evaluation}
We define the \emph{parameterized $H$-evaluation map at $t = 0$} 
to the composition map 
$$
\mathfrak{C}(M) \times \CP_M^\R \stackrel{\Ev_0}{\longrightarrow} M \stackrel{H}{\longrightarrow}  \R
$$
given by
 \be\label{eq:Ev0H}
\Ev_0^H( \lambda, \gamma) = H(\gamma(0)).
 \ee
 Similarly we define \emph{the 1-jet $H$-evaluation map} $\Ev_i^H: = dH(\dot \Ev_i)$ by
\be\label{eq:1jet-evaluationmap}
\dot{\Ev}_i^H(\lambda,\gamma, t_i) := \left(H(\gamma(t_i)), dH(\dot \gamma(t_i))\right) \in \R \times \R
\ee
for $i = 1, \cdots, k$.  
\end{defn}

We mention that when $c$ is a regular value $H^{-1}(c)$ is a smooth hypersurface,
and that if
$\dot{\Ev}_i^H(\lambda,\gamma, t_i) = (c,0)$, we have, by definition,
\be\label{eq:1jet-constraint}
\gamma(t_i) \in H^{-1}(c) \quad \& \quad  \dot \gamma(t_i) \in T(H^{-1}(c)).
\ee
We will be particularly interested in their product 
$$
\dot{\Ev}_+^H: \mathfrak C(M) \times \CP_M^\R \times \operatorname{Conf}_{k}(\R_+) \to (\R \times \R)^k
$$
defined by
\be\label{eq:dotEvHk}
\dot{\Ev}_+^H( \lambda, \gamma, \vec t) 
= \left(\dot{\Ev}_1^H(\lambda,\gamma, t_1), \cdots, \dot{\Ev}_k^H( \lambda,\gamma, t_k)\right)
\ee
for a triple $(\lambda, \gamma, \vec t)$ with $\vec t = ( t_1, \cdots, t_k)$ 
with $0 < t_1 < \cdots <t_k  $. 
%
%Recall from Theorem \ref{thm:regular-SigmaH} that $\Sigma_H(c)$ is a
%nonempty smooth hypersurface for all $\lambda \in \mathfrak{C}^{\text{\rm reg}}(M)$,
%\emph{provided $dH \neq 0$.}

We have the following explicit formulae of the off-shell derivatives of the $H$-evaluation maps.

\begin{lem}\label{lem:EV-derivatives} Let $\nabla$ be the Levi-Civita connection.
For each $i = 1, \cdots, k$, we have
\bea\label{eq:Daleph}
&{}&
(d\Ev_0^H) (\alpha,\eta, \vec a) =  dH(\eta(0)), \nonumber \\
&{}& (d\dot{\Ev}_+^H) (\alpha,\eta, \vec a) \nonumber \\
&{}& = \prod_{i=1}^k
\left( dH(\eta(t_i) + a_i dH(\dot \gamma(t_i)),  dH\left(\frac{D \eta}{dt}(t_i) + a_i 
 \frac{D \dot \gamma}{\del t}(t_i)\right) + \nabla_\eta(dH)(\dot \gamma(t_i))  \right) \nonumber\\
 &{}&
\eea
where we write $\nabla_{\dot \gamma} = \frac{D}{dt}$ as usual in Riemannian geometry.
\end{lem}
\begin{proof} 
For the variation of the map
$$
\Ev_0^H:   (\lambda,\gamma)\mapsto H(\gamma(0)),
$$
we compute 
$$
d\Ev_0^H(\alpha,\eta,\vec a) =
\frac{d}{d s}\Big|_{s =0} H(\gamma_s(t_i + a_i s) = dH(\eta(t_i) + a_i \dot \gamma(t_i)).
$$
For the variation of the map $\dot{\Ev}_+: (\lambda,\gamma, \vec t) \mapsto \dot \gamma(t_i)$, we compute
\beastar
d\dot{\Ev}_+^H(\alpha,\eta,\vec a) & = & \frac{d}{ds}\Big|_{s = 0} dH(\dot \gamma_s(t_i + s))\\
& = & \nabla_\eta(dH)(\dot \gamma(t_i)) + dH\left((\nabla_t \dot \gamma)(t_i) + \frac{D \eta}{d t}(t_i)\right)
\eeastar
where we use the torsion-free property of the Levi-Civita connection 
for the second equality.

By definition, a direct calculation for each $i = 1, \cdots, k$ leads to the identity \eqref{eq:Daleph}
for the variation $(\alpha,\eta,\vec a) = \delta(\lambda,\gamma, \vec t)$
which finishes the proof.
\end{proof}

We define the set of pointed moduli space of $\CM_k^\R(\lambda;M)$ 
of $\lambda$-Reeb trajectories and its subset defined by
$$
\CM_k^\R(\lambda;M,\Sigma_H(c))
 = (\ev_0^H \times \dot{\ev}_+^H)^{-1}\left(\{c\} \times (\{(c,0)\})^k \right).
 $$
We form the union
\be\label{eq:CMSigma-reg-Hk}
\CM_{\mathfrak{C};k}^{\Sigma_H(c)} : = 
\bigcup_{\lambda \in  \mathfrak{C}(M)} \{\lambda \} \times 
\CM_k^\R(\lambda; M, \Sigma_H(c))
\ee
as a parameterized moduli space of Reeb trajectories.
Then we have
\be\label{eq:CMHCk}
\CM_{\mathfrak{C};k}^{\Sigma_H(c)}
= (\Ev_0^H \times \dot{\Ev}_+^H)^{-1}\left(\{c\} \times (\{(c,0)\})^k \right).
\ee
We put their union over $c \in \R$ and write the union as
\be\label{eq:CMCk}
\CM_{\mathfrak{C};k}^{\Sigma_H} := \bigcup_{c \in \R} \CM_{\mathfrak{C};k}^{\Sigma_H(c)}.
\ee
\begin{defn} We define the open subset
\be\label{eq:circCM}
{\CM}^{\circ}_k(\lambda; M,\Sigma_H(c))
: = \{ (\gamma,\vec t) \in \CM_k(\lambda;M,\Sigma_H(c)) \mid \gamma(t_i) \, \text{\rm are distinct
for all $i$}\}.
\ee
We define ${\CM}_{\mathfrak{C};k}^{\circ; \Sigma_H(c)}$ and 
${\CM}_{\mathfrak{C};k}^{\circ;\Sigma_H}$ accordingly.
\end{defn}

\begin{rem}\label{rem:CMcircle} It is  worthwhile to mention that on any compact contact manifold there is
a positive gap $T_{(M,\lambda)} > 0$ such that there is no closed orbit of period
$0 < T < T_{(M,\lambda)}$ so that $\gamma(t_i) \neq \gamma(t_j)$ for $i \neq j$ automatically
holds. Therefore we have
\beastar
&{}&  \{ (\gamma,\vec t) \in {\CM}_k^{\circ,\R}(\lambda; M,\Sigma_H(c))
  \mid 0 <t_1 < \cdots < t_k < T_{(M,\lambda)} \} \\
& = & \{ (\gamma,\vec t) \in \CM_k^\R(\lambda; M,\Sigma_H(c)) \mid  0 <t_1 < \cdots < t_k < T_{(M,\lambda)} \}.
\eeastar
\end{rem}

Next we define the map 
\be\label{eq:alephkH}
\Upsilon^{(k)}_H: \mathfrak{C}(M) \times \CP_M^\R \times \operatorname{Conf}_{k}(\R_+) \to 
\CE_{\mathfrak C}^\R(M) \times \R \times (\R \times \R)^k
\ee
to be
\bea\label{eq:alephk}
\Upsilon^{(k)}_H \left( \lambda, \gamma, \vec t\right) & = & \left(\Upsilon(\lambda,\gamma), 
H(\gamma(0)), 
\dot{\Ev}_+^H(\lambda, \gamma, \vec t)\right) \nonumber\\
& = & \left(\Upsilon(\lambda,\gamma),H(\gamma(0)), \left(
\dot{\Ev}_1^H(\lambda,\gamma, t_1), \cdots, \dot{\Ev}_k^H(\lambda,\gamma, t_k)\right) \right).
\nonumber\\
&{}& 
\eea
Then we have
$$
 \left(\Upsilon_H^{(k)}\right)^{-1}(\{(0)\} \times \{c\} \times \{(c,0)\}^k) = 
 \CM_{\mathfrak{C};k}^{\Sigma_H}
 $$
 by definition. Finally we define  the map 
 \be\label{eq:aleph(k)}
 \aleph^{(k)}: \mathfrak{C}(M) \times \CP_M^\R \times 
C^\infty(M) \times \operatorname{Conf}_{k}(\R_+) \to 
\CE_{\mathfrak C}^\R(M) \times \R \times (\R \times \R)^k 
\ee
by $ \aleph^{(k)}(\lambda,\gamma, H, \vec t): = \Upsilon_H(\lambda, \gamma,\vec t)$. 

\emph{Here and hereafter, we often write the zero section of the vector bundle
$\CE_{\mathfrak C}^\R(M)  \to \CP_{\mathfrak C}^\R(M)$ or of
$\CE_M^\R \to  \CP_M^\R$ just $\{0\}$.}

 \begin{prop}\label{prop:1jet-evaluation} 
Let  $(\lambda, \gamma, H, \vec t) \in \CM_{\mathfrak{C};k}^{\Sigma_H}$ for
$$
\vec t = ( t_1, \cdots, t_k), \quad 0 < t_1 < \cdots <t_k. 
$$
Then the  map 
$$
\aleph^{(k)}: \mathfrak{C}(M) \times \CP_M^\R \times 
C^\infty(M) \times \operatorname{Conf}_{k}(\R_+) \to 
\CE_{\mathfrak C}^\R(M) \times \R \times (\R \times \R)^k
$$
is transverse to 
$$
\{0\} \times \{c\} \times \{(c,0)\}^k  
\subset\CE_{\mathfrak C}^\R(M) \times 
\R \times (\R \times \R)^k
$$
 for any non-zero regular value $c$ of non-constant $H$ and $(\gamma, \vec t) \in {\overset{\circ}{\CM}}_k(M,\Sigma_H(c))$.
\end{prop}
\begin{proof} 
At a point $(\lambda, \gamma, \vec t)$ satisfying
$$
(\lambda, \gamma, \vec t) \in \left(\Upsilon_H^{(k)}\right)^{-1} \left(o_{\CE_{\mathfrak C}^\R(M)} \times 
\{c\} \times \left(\{c\} \times T\Sigma_H(c)\right)^k \right),
$$
we obtain its linearization
\beastar
&{}&
D\Upsilon_H^{(k)}(\lambda,\gamma,\vec t)(\alpha,\eta, \vec a)\\
& = & 
\Big(D\Upsilon(\lambda,\gamma)(\alpha, \eta),d\Ev_0^H(\alpha,\eta,\vec a), 
d\dot{\Ev}_+^H(\alpha,\eta,\vec a)\Big).
\eeastar
We have the following explicit formula for the off-shell derivative.

Now let $c$ be a regular value of $H$. 
By Lemma \ref{lem:Ham-ampleness}, the map $\Ev_0^H(\lambda,\gamma)$ is transverse to
$\Sigma_H(c) = H^{-1}(c)$.

For the map $\dot \Ev_+^H$, each factor of the second formula
in  \eqref{eq:Daleph}  is reduced to
$$
\left(dH(\dot \eta(t_i)),  dH\left(\frac{D \eta}{dt}(t_i) \right) + \nabla_\eta(dH)(\dot \gamma(t_i))\right)
 $$
 for the variation $(\alpha,\eta,h,\vec a) = \delta(\lambda,\gamma, H,\vec t)$ 
  at 
 $(\lambda,\gamma, H,\vec t) \in \CM_{\mathfrak{C};k}^{\Sigma_H}$ because 
 $$
 \frac{D \dot \gamma}{dt}(t) = \nabla_{\dot \gamma(t)} R_\lambda = 
 \left(\nabla_{R_\lambda}R_\lambda\right)(\gamma(t)) = 0
 $$
 recalling, from Lemma \ref{lem:Rlambda-killing},
  $\nabla_{R_\lambda}R_\lambda = 0$ with respect to the Levi-Civita connection  of the triad metric.
 (One may also use the contact triad connection.) 
 
 As before we have only to consider the variation $\alpha$ of the form  $h_\alpha\, \lambda$
 with $h_\alpha \in C^\infty(M,\R)$.
 It follows from the above explicit formula and the regularity of $H$ on $H^{-1}(c)$ that for each $i$ the map
$$
 (\alpha,\eta,h, \vec a) \mapsto \left (dH\left(\frac{D \eta}{dt}(0) \right), dH\left(\frac{D \eta}{dt}(t_i) \right)
 + \nabla_\eta(dH)(\dot \gamma(t_i))\right)
 $$
is transverse to $\Sigma_H(c)$  by Lemma \ref{lem:ampleness-W0} and
Corollary \ref{cor:Yalpha-small}. We note that since $\eta$ can be chosen arbitrarily, 
we may choose it so that $\eta(t_i) = 0$ but $dH(\frac{D\eta}{dt})$ to be
nonzero at  $t= 0$ and $t = t_i$ so that it approximates 
the Dirac-delta measures $u_0 \delta_{0}$, and  $u_i \delta_{t_i}$  
for an arbitrary tuple $\vec u= \{u_i\}_{i=1}^k$  
of constants $u_i$.

\emph{By the standing hyppotheses $dH(\gamma(t_i)) \neq 0$ and $\{\gamma(t_i)\}_{i=1}^k$ are all distinct}, 
 we can choose $h_\alpha$ so that it is supported near 
$\{\gamma(t_1), \cdots, \gamma(t_k)\}$ and approximates the product delta measure
$$
\prod_{i=1}^k u_i \delta_{\gamma(t_i)} \sim h_\alpha
$$
in the $L^2$ sense for any given tuple $\{u_i \in T_{\gamma(t_i)}M\}_i$ of vectors. 
Therefore we can choose $(\eta, \alpha = h_\alpha\,  \lambda)$ so that the terms $dH(\eta(t_i))) \neq 0$ and
$$
dH \left(\frac{D \eta}{dt}(t_i)\right) + \nabla_\eta(dH)(\dot \gamma(t_i))
$$
can be made non-vanishing.

Therefore for any element satisfying 
$(\gamma,\vec t)  \in  \CM^{\circ}_k(\lambda;M,\Sigma_H(c))$ i.e., for  those 
$$
(\lambda,\gamma,H, \vec t) \in \left(\aleph^{(k)}\right)^{-1}(\{0\} \times \{c\} \times (\{(c,0)\}^k),
$$
 we conclude that  $D\Upsilon^{(k)}(\lambda, H)$ is transverse to 
$$
\{0\} \times \Sigma_H(c) \times \left(\Sigma_H(c) \times_{\Sigma_H(c)} T\Sigma_H(c)\right)^k.
$$
Combining the above discussion, we have proved that the map $\aleph^{(k)}$ is a submersion
at any point $(\lambda,\gamma,H, \vec t) \in \left(\aleph^{(k)}\right)^{-1}(\{0\} \times \{c\} \times (\{(c,0)\}^k)$.
This  finishes the proof of Proposition \ref{prop:1jet-evaluation}.
\end{proof}

\section{Wrap-up of the proof of Theorem \ref{thm:nonprojectable-intro}}
\label{sec:wrap-up}

In this section, combining all the results obtained in the previous sections and \emph{re-packaging the
off-shell framework} suitably, we wrap-up the proof of Theorem \ref{thm:nonprojectable-intro}. 

In the previous section, we regard the pair $(\lambda,H)$ as a \emph{parameter} for
the moduli problem for the variable $(\gamma, \vec t)$ given by
\be\label{eq:moduli-problem}
\dot \gamma = R_\lambda(\gamma), \quad H(\gamma(0)) = c, \, dH(\gamma(t_i)) = 0
\ee
for a regular value $c$ of $H$.

In the present section, we need to re-package the map $\aleph^{(k)}$
introduced in \eqref{eq:aleph(k)} \emph{by taking $H$ as the main variable} for the same moduli problem
over the parameter space $\gamma\in \mathfrak{C}$, and $(\gamma,\vec t)$ 
as the \emph{system variables describing the moduli problem.}  This re-packaging is
needed to achieve the generic transversality in terms of the choice of contact forms
$\lambda$ only, not involving the choice of Hamiltonian $H$ as the statement of 
main theorems, Theorem \ref{thm:nonprojectable-intro} and Theorem \ref{thm:nonprojectable-small-intro}.

\subsection{Re-packaging of the off-shell framework}

We start with the vector bundle 
$$
\pi: \CE_M^\R \times \R \to \CP_M^\R
$$
and  consider its section space $\Gamma\left(\CE_M^\R \times \R\right)$. 
For each given $H$, we define a section 
\be\label{eq:FM}
\aleph_\lambda^{(0)}(H) \in \Gamma\left(\CE_M^\R \times \R\right) = : \mathfrak{F}(M)
\ee
 by its value $\aleph_\lambda^{(0)}(H)(\gamma)$ against $\gamma \in \CP_M^\R$ given by
\be\label{eq:aleph0H}
\aleph^{(0)}_\lambda(H)(\gamma) = (\dot \gamma - R_\lambda (\gamma), H(\gamma(0))).
\ee
We regard  $\aleph^{(0)}_\lambda: H \mapsto \aleph_\lambda^{(0)}(H)$ as a map
$C^\infty(M,\R) \to \mathfrak{F}(M)$.
Similarly we  define its pointed version  $\aleph^{(k)}_\lambda$
to be the map 
$$
\aleph^{(k)}_{\lambda}(H) = \Upsilon^{(k)}_\lambda \times_{\CP_M^\R} (\ev_0^H \times \ev_+^H)
$$
where we recall the definitions
$$
\CP_{M,k}^\R = \CP_M^\R \times \operatorname{Conf}_{k}(\R_+)
$$
and of  the section map
$$
\Upsilon_\lambda^{(k)}:  \CP_{M,k}^\R \to \CE_{M,k}^\R; \quad \Upsilon_\lambda(\gamma) = \dot \gamma - R_\lambda(\gamma)
$$
for the vector bundle $\CE_{M,k}^\R \to \CP_{M,k}^\R$,  and of the $H$-evaluation maps
$$
\ev_0^H \times \ev_+^H:  \CP_M^\R \times \operatorname{Conf}_{k}(\R_+) 
\to \R \times (\R \times \R)^k = \R^{2k+1}
$$
from Definition \ref{defn:H-evaluation}. More explicitly, we have
\be\label{eq:alephlambdak}
 \aleph^{(k)}_\lambda(H)(\gamma,\vec t) =  \big(\dot \gamma - R_\lambda(\gamma), H(\gamma(0)), \left(
dH(\dot \gamma(t_1)), \cdots,  dH(\dot \gamma(t_k)\right) \big).
\ee
By definition,  the value $\aleph^{(k)}_\lambda(H)$ lies in
\be\label{eq:CFk}
\text{\rm Map}\left(
\CP_M^\R \times \operatorname{Conf}_{k}(\R_+), \CE_M^\R 
 \times \R \times (\R \times \R)^k \right) =: \mathfrak{F}_k(M).
\ee
We have now identified the domain and codomain of the map $\aleph^{(k)}_\lambda$ as
$$
\aleph^{(k)}_\lambda: C^\infty(M,\R) \to \mathfrak{F}_k(M).
$$
Then we define the map $\aleph^{(k)}$ to be  its parameterized version
$$
\aleph^{(k)}: \mathfrak{C}(M) \times C^\infty(M,\R) \to \mathfrak{F}_k(M).
$$
By definition, we have the equality
$$
\aleph^{(k)} \left(\lambda, \gamma, H, \vec t\right)
= \aleph^{(k)}_\lambda(H)(\gamma,\vec t).
$$

We note that $\mathfrak{F}_k(M)$ is a fibration over the section space $\Gamma(\CE_M^\R \times \R) =  \mathfrak{F}(M)$.
Then we consider the following decompositions
\be\label{eq:aleph0}
\aleph^{(0)} = \bigcup_{\lambda \in \mathfrak{C}(M)} \{ \lambda\} \times \aleph^{(0)}_{\lambda},
\ee
and
\be\label{eq:alephk}
\aleph^{(k)} = \bigcup_{\lambda \in \mathfrak{C}(M)} \{ \lambda\} \times \aleph^{(k)}_{\lambda}.
\ee
By definition, we have 
 \be\label{eq:alephklambdaH}
  \aleph^{(k)}(\lambda,H) =   
\aleph^{(k)}_{\lambda}(H) = \Upsilon_\lambda \times (\ev_0^H \times \ev_+^H)
\ee
as a map from $\CP_M^\R \times \operatorname{Conf}_{k}(\R_+)$ to
$ \CE_{M}^\R \times \R \times (\R \times \R)^k$.

We take the canonical $C^{\ell+1}$-completion of $\CP_M^\R$ and $C^\ell$-completion of the fiber of 
$\CE_M^\R \to \CP_M^\R$ with respect to which the linearization of $\Upsilon_\lambda(\gamma)$ is 
a Fredholm operator.
We will establish that the linearization
map has its Fredholm index negative when $k \geq 2n+2$.

\begin{rem}\label{rem:table} It would be instructive if the readers compare the map $\aleph_\lambda^{(k)}$
with the nonlinear Cauchy-Riemann operator 
$$
\delbar_J: u \mapsto \delbar_Ju
$$
in the Gromov-Witten theory. The map $u$ corresponds to the function
$H$, $\delbar_J u$ to $\aleph^{(0)}_{H;\lambda}$
$J$ to $\lambda$, and
$$
\delbar: (J,u) \mapsto \delbar_J u
$$
to the map $\aleph^{(0)}: (\lambda, H) \mapsto \aleph^{(0)}_{H;\lambda}$.
The following table describes the relevant correspondences between the two:
\medskip

\begin{center} \Large
\begin{tabular}{|c||c|} 
\hline
\text{Gromov-Witten theory}  & \text{Moduli theory of Reeb dynamics}  \\
\hline
$u: \Sigma \to M$ &  $H: M \to \R$ \\ \hline 
$u$ somewhere injective & $H$ non-constant \\ \hline
$J \in \CJ(M)$ & $\lambda \in \mathfrak{C}(M)$  or $\mathfrak{C}(M,\xi)$\\ \hline
$\delbar_J$ & $\aleph^{(0)}_{\lambda}$ \\ \hline 
$\delbar_J u$ & $\aleph^{(0)}_{H;\lambda}$ \\ \hline
$\delbar$ & $\aleph^{(0)}$ \\ \hline
$\CM(M,J)$ & $\CM^\R(\lambda;M)$ \\ \hline
\end{tabular}
\end{center}

\end{rem}
\medskip

\subsection{Fredholm property}

In the above regard, the following is the first  fundamental step towards the proof
of Theorem \ref{thm:nonprojectable-intro}.

\begin{prop}[Fredholm property] Take the $C^\ell$-completion of the map
\eqref{eq:aleph0} 
$$
\aleph^{(0)}_\lambda: C^\infty(M,\R) \to  \mathfrak{F}(M).
$$
Then the derivative $D\aleph^{(0)}_\lambda(H)$ is a Fredholm operator of index 0 at each 
non-constant   $H$.
\end{prop}
\begin{proof} Since $H$ is non-constant, there is a regular value $c$ of $H$ 
such that $H^{-1}(c) \neq \emptyset$.

Let $(H,\gamma) \in \left(\aleph_\lambda^{(0)}\right)^{-1}(\{0\} \times \{c\})$.
By definition, this is equivalent to assuming 
 $\gamma\in \CM(\lambda; M,\Sigma_H(c))$, in particular 
$\gamma(0) \in \Sigma_H(c)$. Recalling the definition
$\mathfrak{F}(M) = \Gamma(\CE^\R_M \times \R)$
and 
$$
\aleph^{(0)}_\lambda(H)(\gamma) = (\dot \gamma - R_\lambda (\gamma), H(\gamma(0))),
$$
from \eqref{eq:aleph0H},
we also write
$$
\aleph^{(0)}_\lambda(H)(\gamma)  = \aleph^{(0)}_\lambda(H, \gamma)
$$
by a slight abuse of notation.
Then we compute its linearization
$$
D\aleph^{(0)}_\lambda(H,\gamma)(h,\eta) = \left(\frac{D \eta}{dt} - DR_\lambda(\gamma) \eta, h(\gamma(0)) + dH(\eta(0))\right).
$$
Since the differential ODE operator $\frac{D}{dt} - DR_\lambda(\gamma)$ is elliptic,
the closedness of the image is apparent.

The kernel of the linear operator
$$
D\aleph_\lambda^{(0)}(H): C^\infty(M) \to T_{\aleph_\lambda^{(0)}(H)} \mathfrak{F}(M)
$$
is given by the set of $h$'s that satisfy
\be\label{eq:kernel-eq}
\begin{cases}
\frac{D \eta}{dt} - DR_\lambda(\gamma) \eta = 0,\\
 h(\gamma(0)) + dH(\eta(0)) = 0
\end{cases}
\ee
\emph{for all $\gamma$ satisfying $\dot \gamma = R_\lambda(\gamma)$} and $\eta \in \Gamma(\gamma^*TM)$.
Since $\eta$ satisfies the linear first-order ODE 
$$
\frac{D \eta}{dt} - DR_\lambda(\gamma)\, \eta = 0, 
$$
the initial condition $\eta(0)$
uniquely determines $\eta$ for given $\gamma$.
Furthermore  $h$ satisfies
$$
h(\gamma(0)) = - dH(\eta(0)); \quad \eta(0) \in T_{\gamma(0)}M
$$ 
by the second equation of \eqref{eq:kernel-eq},
and hence $h(\gamma(0))$ is uniquely determined by $\eta(0)$. In addition,
 the evaluation map
$$
\ev_0: \CM(M,\lambda) \to M; \quad \gamma \mapsto \gamma(0)
$$
is surjective. Therefore $h$ itself is uniquely determined by $H$ by varying $\gamma$.
Combining the above, we derive
\be\label{eq:dim-ker}
\dim \left(\ker D\aleph_\lambda^{(0)}(H)\right) = \dim M = 2n+1 < \infty.
\ee

To compute the cokernel of $D\aleph_\lambda^{(0)}(H)$, we recall
$$
T_{(\gamma, c)} \mathfrak{F}(M) 
= \left\{(\xi, a)  \mid \xi \in \Gamma(\gamma^*TM), \, a \in \R \right\}
$$
by definition of $\CE^\R_M =  \bigcup_{\gamma \in \CP^\R_M} \{\gamma\} \times \Gamma(\gamma^*TM)$
where $c = H(\gamma(0))$. With respect to the obvious $L^2$ inner product on the tangent space, 
we do integration by parts and derive that the kernel element $(\zeta,b)$ of
 the $L^2$ adjoint of \eqref{eq:kernel-eq}  \emph{at each $\gamma$} satisfies
\be\label{eq:cokder-eq}
\int_M \left\langle \frac{D \eta}{dt} - DR_\lambda(\gamma) \, \eta, \zeta \right\rangle \, d\mu_\lambda + 
 b( h(\gamma(0)) + dH(\eta(0))) = 0
 \ee
for all $\eta$ and $h$.  By considering $\eta = 0$, the equation is reduced to
$$
b( h(\gamma(0)) = 0
$$
 for all $h$. Therefore we obtain $b = 0$.
Making back substitution of $b = 0$ into the above equation and doing integration by parts, we derive that
$\zeta$ satisfies the adjoint equation
$$
- \frac{D \zeta}{dt} - DR_\lambda(\gamma) \, \zeta = 0.
$$
By the same argument applied to the kernel case above, we derive
\be\label{eq:dim-cokernel}
\dim \left(\coker D\aleph_\lambda^{(0)}(H)\right) = \dim M = 2n+1 < \infty.
\ee
This finishes the proof.
\end{proof}

\subsection{Application of Sard-Smale theorem}
\label{subsec:Sard-Smale}

We now consider the parameterized map 
$$
\aleph^{(k)}: \mathfrak{C}(M) \times C^\infty(M,\R) \to \mathfrak{F}_k(M).
$$
Recall the transversality result proved in Proposition \ref{prop:1jet-evaluation} 
viewed in the current new packaging of
the off-shell framework. Then we derive by the implicit function theorem that
$$ 
{\CM}_{\mathfrak{C}^{\text{\rm map}};k}^{\circ;\Sigma_H(c)} \subset  \mathfrak{C}(M) \times C^\infty(M,\R)
$$
is a smooth Frechet submanifold of $ \mathfrak{C}(M) \times C^\infty(M,\R)$.

As before in Section \ref{sec:moduli}, we take a suitable off-shell Banach completions of
 relevant infinite dimensional Frechet manifolds,
the domain and codomain of the map $\Upsilon$,  so that the relevant  projection 
$$
\Pi_k :  
{\CM}_{\mathfrak{C}^{\text{\rm map}};k}^{\circ;\Sigma_H(c)} \to 
\mathfrak{C^{\text{\rm map}}}(M); \quad (\lambda,H) \mapsto \lambda
$$
becomes  a (nonlinear) Fredholm map on the $C^\ell$  completions of its domain and
codomain as before. In regard to Remark \ref{rem:Taubes}, we also recall that the moduli space $\Upsilon_\lambda^{-1}(0)$
(as a set)  is independent of this regularity of the completion, 
and so are other moduli spaces such as
 $$
{\CM}_{\mathfrak{C}^{\text{\rm map}};k}^{\circ;\Sigma_H(c)}
\subset  \mathfrak{C}^{\text{\rm map}}(M) \times C^\infty(M,\R)
$$  
 
\begin{thm} \label{thm:negative-index} 
Consider the projection map
$$
\Pi^{\text{\rm map}}_k: 
{\CM}_{\mathfrak{C}^{\text{\rm map}};k}^{\circ;\Sigma_H(c)}  \to \mathfrak{C}^{\text{\rm map}}(M)
$$
which is the restriction of the natural projection $\CE_k^\R(M) \times \mathfrak{C}(M) \times 
\operatorname{Conf}_k(\R_+) \to \mathfrak{C}(M)$.
We take suitable Banach completions of the domain and codomain and extend the map $\Pi^{\text{\rm map}}$
as the completion as before. Then we have the following:
\begin{enumerate}
\item $\Pi_{k}^{\text{\rm map}}$ is a (nonlinear) Fredholm map of index $2n+1 - k$ near
any nonconstant $H$ with $c$ be a regular value of $H$ such that $H^{-1}(c) \neq \emptyset$.
\item For any regular value $\lambda \in \mathfrak{C}^{\text{\rm map}}$ of  $\Pi_k^{\text{\rm map}}$,  the set
$$ 
{\CM}_k^{\circ;\R}(\lambda;M, {\Sigma_H(c)}) 
 =
{\CM}_{\mathfrak{C};k}^{\circ;\Sigma_H(c)}\cap \Pi_{H,k}^{-1}(\lambda)
 $$
is empty for any $k \geq 2n+2$ for any regular value $c$ of $H$.
\end{enumerate}
\end{thm}
\begin{proof} For the Fredholm property of this projection
 and the calculation of its index, we first mention that we have already proved that
 the linearization map of the vertical component of $\Pi_0^{\text{\rm map}}$, that is,
$$
D\Upsilon_\lambda(\gamma): \Gamma(\gamma^*TM) \to \Gamma(\gamma^*(TM)
$$
is elliptic and so its $C^\ell$   Banach completion  is Fredholm
in Section \ref{sec:moduli}. The Fredholm property of
the linearization map on the pointed moduli spaces 
is an immediate consequence therefrom since the codomains of the 
evaluation maps are compact.  
 It is easy to check from this fiberwise ellipticity that the projection $d\Pi_{H}$ is a closed
operator. This will then imply the finite dimensionality of the cokernel thereof and 
the isomorphism
$$
\coker d\Pi_k^{\text{\rm map}}|_{T(\Pi_k^{\text{\rm map}})^{-1}(\lambda)} \cong \coker D\aleph_\lambda^{(k)}
$$
either directly or by quoting a general diagram chasing
argument. (See the diagram in \cite[p. 342]{oh:book2} 
the detailed explanation of for the general diagram chasing argument.) 

Now we compute the index to be
$$
2n +1 + k - k -  k = 2n+1 - k:
$$
Here the first $k$ is the number of marked points $t_i$ and the second $k$ is 
the codimension of $\left(\Sigma_H(c)\right)^{k}$ in $M^{k}$, 
and the third $k$ is the codimension of the
derivative constraints 
$$
\dot \gamma(t_i)  \in T_{\gamma(t_i)}\Sigma_H(c) \subset T_{\gamma(t_i)}M, 
\quad i=1, \cdots, k.
$$
This finishes the proof of the theorem.
\end{proof}

Now we vary them for our purpose of \emph{making
$
{\CM}_k^{\circ;\Sigma_H(c)}(\lambda;M)$ empty for the regular values $\lambda$ of
$\Pi_{H,k}^{\text{\rm map}}$.}
In the remaining section,  it is enough to fix $k=2n+2$ so that $2k+1 - (2k+2) = -1 < 0$.
We denote by 
$$
\mathfrak{C}_{2n+2}^{\text{\rm reg}}(M)
$$
the regular values of $\Pi_{H,2n+1}^{\text{\rm map}}$, and then take the intersection
\be\label{eq:CMnp}
\mathfrak{C}^{\text{\rm np}}(M): = 
 \mathfrak{C}^{\text{\rm map}}(M) \cap \mathfrak{C}_{2n+2}^{\text{\rm reg}}(M).
\ee
This intersection is what
we have been searching for and that is mentioned in Theorem \ref{thm:nonprojectable-intro}.
Obviously it is a residual subset by construction.
Then 
  \be\label{eq:CMH-empty}
{\CM}_{2n+2}^{\circ}(\lambda;M,\Sigma_H(c)) 
 \ee
is a smooth manifold of negative dimension.
Therefore we conclude
\be\label{eq:emptyset} 
{\CM}_{2n+2}^{\circ;}(\lambda;M, \Sigma_H(c))  := \Pi^{-1}(\lambda) \cap   
{\CM}_{\mathfrak{C}^{\text{\rm map}},2n+2}^{\circ;\Sigma_H(c)} = \emptyset
\ee
for all $\lambda \in \mathfrak{C}^{\text{\rm np}}(M)$  by the Sard-Smale theorem.
By construction, its implication is that
at least one of following conditions 
  \be\label{eq:g-constraint}
 H(\gamma(0))= c \quad \& \quad \dot \gamma(t_i) \in 
 T_{\gamma(t_i)}\Sigma_H(c) \quad i = 1, \cdots, 2n+2
 \ee
 should fail to hold
 for any Hamiltonian trajectories $\gamma$ of $H$.

Unravelling the above discussion, we obtain the following corollary.
\begin{cor}\label{cor:negative-index} Let $\lambda \in \mathfrak{C}^{\text{\rm np}}(M)$. 
Suppose that $H$ is non-constant and
let $c$ be a regular value of $H$ with $H^{-1}(c) \neq \emptyset$. 
Then there exists a solution $\gamma$ of $\dot x = R_\lambda(x)$ such that $\gamma(0) \in H^{-1}(c)$ 
and that for any choice of $2n+2$ points $t_1 < t_2 < \cdots < t_{2n+2}$ satisfying
$$
\gamma(t_i) \in H^{-1}(c),
$$
there is at least one $i$ such that $\dot \gamma(t_i)$ is not tangent to $H^{-1}(c)$.
\end{cor}

Now we are ready to complete the proof of Theorem \ref{thm:nonprojectable-intro}.

\begin{proof}[Wrap-up of the proof of Theorem \ref{thm:nonprojectable-intro}]
Let $\lambda \in \mathfrak{C}^{\text{\rm np}}(M)$, assume $dH \neq 0$. 
Suppose to the contrary of Theorem \ref{thm:nonprojectable-intro} that $R_\lambda[H] = 0$. 
Then for any given constant $c \in \R$ and for  Reeb trajectory $\gamma$ issued at $\Sigma_H(c)$
remains to stay in $\Sigma_H(c)$. In particular we must have
\be\label{eq:dotgamma-tangent}
\dot \gamma(t) \in T\Sigma_H(c)
\ee
for all $t \in \R$, whenever $\gamma(0) \in \Sigma_H(c)$.

On the other hand, let $c$ be a regular value of $H$ with $H^{-1}(c) \neq \emptyset$.
Recall that by definition we have a surjective map 
$$
\ev_0: \CM^\R(\lambda; M,\Sigma_H(c)) \to \Sigma_H(c)
$$
and with $ \Sigma_H(c) \neq \emptyset$.  In particular the set 
$\CM^\R(\lambda; M,\Sigma_H(c))$ is never empty.
We also note ${\CM}_{2n+2}^{\circ, \R}(M,\Sigma_H(c))$ is diffeomorphic to
$$
{\CM}^{\circ, \R}(\lambda;M,\Sigma_H(c)) \times \operatorname{Conf}_{2n+2}(\R_+).
$$
Combining these two and Remark \ref{rem:CMcircle}, 
we have showed that 
\beastar
&{}& {\CM}_{2n+2}^{\circ, \R}(\lambda;M,\Sigma_H(c)) \cap \{0 < t_1 < \cdots t_{2n+2} < T_{(\lambda,M)}\} \\
& = & \CM_{2n+2}^\R(\lambda;M,\Sigma_H(c)) \cap \{0 < t_1 < \cdots t_{2n+2} < T_{(\lambda,M)}\},
\eeastar
which is not an empty set.
Obviously this contradicts to \eqref{eq:emptyset}. Therefore by Corollary \ref{cor:negative-index},
there must be some point $x \in \Sigma_H(c)$ such that $\gamma(0) =x$ but 
$R_\lambda(\gamma(0)) \pitchfork \Sigma_H(c)$, which contradicts to \eqref{eq:dotgamma-tangent}.
Therefore we have concluded  $R_\lambda[H] \neq 0$ for any $\lambda \in \mathfrak{C}^{\text{\rm np}}(M)$.

Then by taking the contrapositive of the standing hypothesis, we conclude 
that for any $\lambda \in \mathfrak{C}^{\text{\rm np}}(M)$, if $R_\lambda[H] = 0$, then $dH = 0$.
In other words, we have
$$
\ker R_\lambda = \{\text{\rm constant functions}\}.
$$
This finally completes the proof of Theorem \ref{thm:nonprojectable-intro}.
\end{proof}

\part{On the small phase space}
\label{part:small}

This part is the small phase space analog to  Part \ref{part:big}
 with some adaptation needed for our current usage of small phase space $\mathfrak{C}(M,\xi)$ 
 and $\mathfrak{Cont}(M,\xi)$ here. 
In this part, we prove the analog Theorem \ref{thm:nonprojectable-small-intro} to Theorem \ref{thm:nonprojectable-intro} 
for $\mathfrak{C}(M,\xi)$ and $\mathfrak{Cont}(M,\xi)$ with fixed contact structure $\xi$
instead of $\mathfrak{C}(M)$ and $\mathfrak{Cont}(M)$. Our explanations will be brief
 just by indicating the necessary changes  to be made from that of the latter case  to handle the former case,
 because the proof is largely the same except our usage of small phase space $\mathfrak{C}(M,\xi)$
in place of the big phase space $\mathfrak{C}(M)$.

\section{The mapping and 1-jet evaluation transversalities: Statements}
\label{sec:transversality-small}

In this section, we always assume that $H$ is not constant, 
without further mentioning.

We recall the description of the tangent space of the subset 
$$
\mathfrak{C}(M,\xi) \subset \mathfrak{C}(M)
$$
from Proposition \ref{prop:Tlambda-xi}
$$
T_\lambda \mathfrak{C}(M,\xi) = \{h \, \lambda \mid h \in C^\infty(M,\R)\}
$$
as a submanifold $\mathfrak{C}(M)$ and the discussion given in Section \ref{sec:phasespace}.
We start with the following identification of 
this tangent space with $C^\infty(M,\R)$, recalling the discussion in 
Section \ref{sec:phasespace}. 

We have the natural inclusion
\be\label{eq:ContMxi}
\mathfrak{Cont}(M,\xi) \cong \bigcup_{\lambda \in \mathfrak{C}(M,\xi)} \{\lambda \} \times \Cont(M,\lambda) \hookrightarrow 
\Omega^1(M) \times \Cont(M,\xi)
\ee
where $\Omega^1(M) \times \Cont(M,\xi)$ does not depend on $\lambda$. 
We  form the union
\be\label{eq:CMRk}
\CM_{\mathfrak{C}(\xi);k}^\R:= \bigcup_{\lambda \in \mathfrak{C}(M,\xi)} \{\lambda\} \times \CM_k^\R(\lambda;M).
\ee
 By definition, it is a subset  of 
the trivial product bundle 
$$
\mathfrak{C}(M,\xi) \times \CP_{M;k}^\R \to \mathfrak{C}(M,\xi).
$$
We introduce another parameterized  infinite dimensional vector bundle
\be\label{eq:CEMR-C-small}
\CE_{\mathfrak{C}(\xi)}^\R: =\mathfrak{C}(M,\xi) \times \CE_M^\R
 =  \bigcup_{(\lambda,\gamma) \in \mathfrak{C}(M,\xi) \times \CP_M^\R} 
\{\gamma\} \times \Gamma(\gamma^*TM)
\ee
over $\mathfrak{C}(M,\xi) \times \CP_M^\R$, which is the small phase space analog to 
$\CE_{\mathfrak{C}}^\R$.
We regard the map
\be\label{eq:UpsilonH-small}
\Upsilon^{\text{\rm sm}}(\lambda,\gamma) = \dot \gamma - R_\lambda(\gamma)
\ee
as a section of the vector bundle $\CE_{\mathfrak{C}(\xi)}^\R$. 

\begin{prop}[Mapping transversality]\label{prop:mapping-transversality-small} Consider the zero set
of $\Upsilon^{\text{\rm sm}}$, 
$$
\CM^\R_{\mathfrak{C}(\xi)}: = (\Upsilon^{\text{\rm sm}})^{-1}(0).
$$
 Then the following holds:
\begin{enumerate}
\item The parameterized moduli space 
$\CM^\R_{\mathfrak{C}}(\xi)\subset \mathfrak{C}(M,\xi) \times \CP_M^\R$
is a smooth submanifold.
\item The projection map $\Pi:\CM^\R_{\mathfrak{C}}(\xi)\to \mathfrak{C}(M,\xi)$
is a nonlinear Fredholm map of index $2n+1 = \dim M$.
\end{enumerate}
In particular, for any regular value $\lambda$ of $\Pi$, the preimage
$$
\CM^\R(\lambda;M): = \Pi^{-1}(\lambda)
$$
is a compact manifold of dimension $2n+1$ which is diffeomorphic to $M$.
Denote by $\mathfrak{C}^{\text{\rm map}}(M,\xi)$ the set of regular values of $\Pi$.
\end{prop}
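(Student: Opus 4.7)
The plan is to follow verbatim the Fredholm-alternative argument used to prove Proposition \ref{prop:mapping-transversality} in Section \ref{sec:mapping-transversality}, observing that the restriction from $T_\lambda \mathfrak{C}(M) \cong \Omega^1(M)$ down to the subspace
\[
T_\lambda \mathfrak{C}(M,\xi) = \{h\lambda \mid h \in C^\infty(M,\R)\}
\]
identified in Lemma \ref{lem:Tlambda-xi} costs nothing essential: the ampleness Lemma \ref{lem:ampleness-W0} driving the earlier proof already invokes only variations of the form $\alpha = h\lambda$, because by Corollary \ref{cor:Yalpha} the first variation $Y_\alpha := \delta_\lambda(R_\lambda)(\alpha)$ depends solely on the Reeb component $h_\alpha = \alpha(R_\lambda)$.

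Concretely, I would first regard $\Upsilon$ as a smooth section of the (infinite-dimensional) vector bundle $\CE_{\mathfrak{C}(\xi)}^\R \to \mathfrak{C}(M,\xi) \times \CP_M^\R$ and compute its covariant linearization at a zero $(\lambda,\gamma)$ along $(\alpha,\eta)$ with $\alpha = h\lambda$:
\[
D\Upsilon(\lambda,\gamma)(\alpha,\eta) = \frac{D\eta}{dt} - DR_\lambda(\eta) - Y_\alpha(\gamma), \qquad Y_\alpha = -X^\pi_{(h;\lambda)} - hR_\lambda.
\]
To prove surjectivity onto $\Gamma(\gamma^*TM)$ I apply the Fredholm alternative: if $Z \in \Gamma(\gamma^*TM)$ is $L^2$-orthogonal to the image, then testing against variations of the form $(0,\eta)$ and integrating by parts yields the adjoint first-order linear ODE $-\frac{DZ}{dt} - DR_\lambda(Z) = 0$ on $\gamma^*TM$. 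Consequently $Z$ is determined by its value at a single point, and it suffices to show $Z(0) = 0$.

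The crucial ingredient is then the small-phase-space analog of Lemma \ref{lem:ampleness-W0}: at every $x \in M$,
\[
W_x := \span_\R\{Y_\alpha(x) \mid \alpha = h\lambda,\ h \in C^\infty(M,\R)\} = T_xM.
\]
This is immediate from the identity $Y_\alpha(x) = -X^\pi_{(h;\lambda)}(x) - h(x)\,R_\lambda(x)$ combined with the Hamiltonian ampleness Lemma \ref{lem:Ham-ampleness}: the $\xi$-component $X^\pi_{(h;\lambda)}(x) \in \xi_x$ and the Reeb coefficient $h(x) \in \R$ can be prescribed independently by adjusting $h$ near $x$, so $W_x = \xi_x \oplus \R\{R_\lambda(x)\} = T_xM$. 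Combining this with a standard cutoff argument localizing $h$ near $\gamma(0)$ (where $\gamma$ is embedded for small $t$) and testing against $(h\lambda,0)$ forces $\langle Z(0), v\rangle = 0$ for every $v \in T_{\gamma(0)}M$, hence $Z(0) = 0$ and $Z \equiv 0$ by the adjoint ODE.

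Once surjectivity is in hand, $\CM^\R(M,\xi) = \Upsilon^{-1}(0)$ is a smooth submanifold by the implicit function theorem; the projection $\Pi$ is nonlinear Fredholm with kernel parameterized by initial conditions $\eta(0) \in T_{\gamma(0)}M$, giving index $\dim M = 2n+1$; properness and compactness of the fibers follow from the derivative bound in Lemma \ref{lem:ODE-estimate}; and for any regular value $\lambda$ the evaluation $\ev_0 : \Pi^{-1}(\lambda) \to M$ is a diffeomorphism by existence, uniqueness and smooth dependence of the initial-value problem $\dot x = R_\lambda(x)$. The only substantive point to verify is the ampleness displayed above; since it is essentially built into the formula of Corollary \ref{cor:Yalpha}, the main anticipated obstacle is in fact no obstacle at all, and the proof reduces to a direct transcription of the big-phase-space argument.
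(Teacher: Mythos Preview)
Your proposal is correct and follows essentially the same approach as the paper: after computing the linearization and invoking the formula $Y_\alpha = -X_{(h;\lambda)}^\pi - hR_\lambda$ for $\alpha = h\lambda$ (which the paper records as Lemma \ref{lem:Yalpha-small}, a restatement of Corollary \ref{cor:Yalpha}), the paper simply says ``the rest of the proof is the same as that of Proposition \ref{prop:mapping-transversality} and so omitted,'' and what you have written out is exactly that omitted argument.
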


We also  consider the 1-jet $H$-evaluation maps
defined by
\be\label{eq:1jet-evaluationmap}
\dot{\Ev}_i^H(\lambda,\gamma, t_i) := \left(H(\gamma(t_i)),dH(\dot \gamma(t_i)) \right) \in \R \times \R
\ee
and will be particularly interested in their product
$$
\dot{\Ev}_+^H: \mathfrak C(M) \times \CP_M^\R \times \operatorname{Conf}_{k}(\R_+) \to (\R \times \R)^k
$$
defined by
\be\label{eq:GammaH}
\dot{\Ev}_{+,k}^H(\lambda, \gamma, \vec t) 
= \left(\dot{\Ev}_1^H(\lambda,\gamma, t_1), \cdots, \dot{\Ev}_k^H(\lambda,\gamma, t_k)\right)
\ee
for a triple $(\lambda, \gamma, \vec t)$ with $\vec t = ( t_1, \cdots, t_k)$ 
with $0 < t_1 < \cdots <t_k$. We define a subset of $\CM_k^\R(M;\lambda)$ by
$$
\CM_k^\R(M, {\Sigma_H(c)})
: = (\ev_0^H \times \dot{\ev}_+^H)^{-1}\left(\{c\}  \times \{(c,0)\}^k \right),
$$
and consider the union
$$
\Sigma_{(H;(\cdot))} = 
\bigcup_{\lambda \in \mathfrak{C}^{\text{\rm tr}}(M,\xi)} \{\lambda\} \times \Sigma_H(c). 
$$
We also  form the union
$$
\CM_{\mathfrak{C}(\xi)}^{\Sigma_H(c)}
: = \bigcup_{\lambda \in  \mathfrak{C}^{\text{\rm tr}}(M,\xi)} \{\lambda \} \times 
\CM_k^\R(M, {\Sigma_H(c)}) 
$$
similarly as in the case of big phase space. Then we have the following small phase space
counterpart of Proposition \ref{prop:1jet-evaluation}.

\begin{prop}\label{prop:1jet-evaluation-small} 
Suppose that $H$ is non-constant. Let 
$$
(\lambda, \gamma, H, \vec t) \in \left(\Upsilon^{(k)}\right)^{-1}(\{(0)\} \times \{c\} \times \{(c,0)\}^k)
$$
with $\lambda \in \mathfrak{C}(M,\xi)$  for
$\vec t = ( t_1, \cdots, t_k)$  with $0 < t_1 < \cdots <t_k$  
such that $\gamma(t_i) \neq \gamma(t_j)$ whenever $i \neq j$.
Then the  map 
$$
\Upsilon^{(k)}: \mathfrak{C}(M,\xi) \times \CP_M^\R \times 
C^\infty(M) \times \operatorname{Conf}_{k}(\R_+) \to 
\CE_{\mathfrak C(\xi)}^\R(M) \times \R \times (\R \times \R)^k
$$
is transverse to 
$$
\{0\} \times \{c\} \times \{(c,0)\}^k  
\subset\CE_{\mathfrak C(\xi)}^\R(M) \times 
\R \times (\R \times \R)^k
$$
 for any non-zero regular value $c$ of $H$.
\end{prop}

We define the subset of $\CM_k^{\circ,\R}(\lambda;M, {\Sigma_H(c)})$ of
$\CM_k^\R(\lambda;M, {\Sigma_H(c)})$
and  form the union
$$
\CM_{\mathfrak{C}(\xi)}^{\circ; \Sigma_H(c)}
: = \bigcup_{\lambda \in  \mathfrak{C}^{\text{\rm tr}}(M,\xi)} \{\lambda \} \times 
\CM_k^{\circ,\R}(\lambda; M, {\Sigma_H(c)}) 
$$
similarly as in the case of big phase space. 

\section{The mapping and 1-jet evaluation transversalities: Proofs}
\label{sec:mapping-transversality-small}
We consider the map
$$
\Upsilon_\lambda^{\text{\rm sm}}(\gamma) = \dot \gamma - R_\lambda(\gamma)
$$
as a smooth section of the (infinite dimensional) vector bundle 
$\CE_M^\R$  over $\CP_M^\R$.  
Furthermore its zero set is 
precisely the set of $\lambda$ Hamiltonian trajectories \eqref{eq:CMR-lambda}
$$
\CM^\R (\lambda;M) := (\Upsilon_\lambda^{\text{\rm sm}})^{-1}(0),
$$
and the map $\gamma \mapsto \dot \gamma - R_\lambda(\gamma)$ is a 
nonlinear elliptic operator, i.e., its linearization map 
$$
D\Upsilon_\lambda^{\text{\rm sm}}(\gamma): \Gamma_c(\gamma^*TM)  \to \Gamma_c(\gamma^*TM) 
$$
is a first-order ordinary differential  operator, where $\Gamma_c(\gamma^*TM)$ is the space of 
compactly supported smooth sections of $\gamma^*TM$.

We now provide the off-shell framework for the study of the mapping transversality
of the 1-pointed moduli space
$$
\CM^\R(\lambda;M)
$$
under the perturbation of $\lambda$.
Then we form the parameterized bundle
$$
\CE^\R_{\mathfrak{C}(\xi)} : = \bigcup_{\lambda \in \mathfrak{C}(M,\xi)} \{{\lambda}\} \times \CE_M^\R
= \bigcup_{(\lambda,\gamma) \in \mathfrak{C}(M,\xi) \times \CP_M^\R} \{(\lambda,\gamma)\} \times \Gamma( \gamma^*TM)
$$
and consider the parameterized section $\Upsilon^{\text{\rm sm}}$  defined by 
$$
\Upsilon^{\text{\rm sm}}(\lambda,\gamma) = \Upsilon_\lambda(\gamma).
$$

The following is the basis of the generic mapping transversality result. 

\begin{prop} 
The covariant linearization map 
$$
D\Upsilon^{\text{\rm sm}}(\lambda,\gamma): T_{(\lambda,\gamma)}\CP^\R_{\mathfrak{C(\xi)}} \to \Gamma(\gamma^*TM)
$$
is surjective at all points
$$
(\lambda, \gamma) \in (\Upsilon^{\text{\rm sm}})^{-1}(0) \subset \mathfrak{C}^{\text{\rm map}}(M,\xi) \times \CP_M^\R.
$$
In particular the zero set $(\Upsilon^{\text{\rm sm}})^{-1}\left(o_{\CE_{\mathfrak C}^\R(M,\xi)}\right)$ 
is an infinite dimensional smooth manifold.
\end{prop}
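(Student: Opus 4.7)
The plan is to adapt the argument for the big phase space case (the surjectivity proposition from Section \ref{sec:mapping-transversality}) to the present restricted setting, observing that the only genuine change is the restriction of the parameter $\alpha$ to $T_\lambda \mathfrak{C}(M,\xi)$, and that the key ampleness step still goes through under this restriction. First I would note that the explicit covariant linearization
$$D\Upsilon(\lambda,\gamma)(\alpha,\eta) = \frac{D\eta}{dt} - DR_\lambda(\eta) - Y_\alpha(\gamma)$$
derived in \eqref{eq:1st-variation} remains valid verbatim; the only modification forced by the small phase space is that, by Lemma \ref{lem:Tlambda-xi}, the admissible variation $\alpha$ must now be of the form $\alpha = h\,\lambda$ for some $h \in C^\infty(M,\R)$.

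Next, following the Fredholm alternative strategy, I would suppose $Z \in \Gamma(\gamma^*TM)$ is $L^2$-orthogonal to the image of $D\Upsilon(\lambda,\gamma)$ and argue $Z \equiv 0$. Testing against $\alpha = 0$ and compactly supported $\eta$ and integrating by parts produces the adjoint first-order linear ODE
$$-\frac{DZ}{dt} - DR_\lambda(Z) = 0$$
exactly as in \eqref{eq:adjoint}, so $Z$ is uniquely determined by its value at a single point; it therefore suffices to prove $Z(\gamma(0)) = 0$.

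The key computational step is the small-phase-space analog of Lemma \ref{lem:ampleness-W0}, namely
$$W_{\gamma(0)}^{\mathrm{sm}} := \span_\R\{Y_\alpha(\gamma(0)) \mid \alpha \in T_\lambda \mathfrak{C}(M,\xi)\} = T_{\gamma(0)}M.$$
For $\alpha = h\,\lambda$, Corollary \ref{cor:Yalpha} gives
$$Y_{h\lambda}(\gamma(0)) = -X^\pi_{(h;\lambda)}(\gamma(0)) - h(\gamma(0))\,R_\lambda(\gamma(0)).$$
The $\xi$-component $X^\pi_{(h;\lambda)}(\gamma(0))$ depends only on $dh(\gamma(0))$, and by Lemma \ref{lem:Ham-ampleness} ranges over all of $\xi_{\gamma(0)}$ as $h$ varies; the Reeb component is $-h(\gamma(0))\,R_\lambda(\gamma(0))$, which depends only on the value $h(\gamma(0))$, independent of $dh(\gamma(0))$. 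Hence $W_{\gamma(0)}^{\mathrm{sm}} = \xi_{\gamma(0)} \oplus \R\{R_\lambda(\gamma(0))\} = T_{\gamma(0)}M$. Combined with a standard cutoff-function argument localizing $h$ in an arbitrarily small neighborhood of $\gamma(0)$, the orthogonality relation then forces $\langle Z(\gamma(0)), v\rangle = 0$ for every $v \in T_{\gamma(0)}M$, whence $Z(\gamma(0)) = 0$ and thus $Z \equiv 0$ by uniqueness for the adjoint ODE. This yields surjectivity of $D\Upsilon(\lambda,\gamma)$ and hence the smooth manifold structure of $\Upsilon^{-1}\bigl(o_{\CE^\R_{\mathfrak{C}(\xi)}}\bigr)$.

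The only substantive concern I expect is the ampleness step: restricting from arbitrary $\alpha \in \Omega^1(M)$ down to the special form $h\,\lambda$ could \emph{a priori} shrink $W_{\gamma(0)}^{\mathrm{sm}}$ to a proper subspace, which would block surjectivity. The point that saves the argument is the explicit formula for $Y_{h\lambda}$ from Corollary \ref{cor:Yalpha}: it already decouples into independent $\xi$- and Reeb-components controlled respectively by the 1-jet $dh(\gamma(0))$ and the value $h(\gamma(0))$. Because these two data at a single point may be prescribed independently by a suitable germ of $h$, no loss of ampleness occurs after passing to the small phase space, and the big-phase-space proof adapts with essentially no change.
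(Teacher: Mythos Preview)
Your proposal is correct and follows essentially the same approach as the paper: the paper's proof simply records the linearization formula, invokes the special case of Corollary \ref{cor:Yalpha} for $\alpha = h\lambda$ (stated there as Lemma \ref{lem:Yalpha-small}), and then declares that ``the rest of the proof is the same as that of Proposition \ref{prop:mapping-transversality} and so omitted.'' Your write-up spells out that omitted remainder (the Fredholm alternative, the adjoint ODE, and the ampleness check at $\gamma(0)$) in more detail than the paper does, but the strategy and the key observation---that the decoupling of $Y_{h\lambda}$ into independently controllable $\xi$- and Reeb-components preserves ampleness under the restriction to the small phase space---are identical.
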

\begin{proof} By covariantly linearizing the expression $\dot \gamma - R_\lambda(\gamma)$
along $\delta(\lambda,\gamma) = (\alpha,\eta)$  at $(\lambda,\gamma)$ of the form
\be\label{eq:alpha-small}
\alpha = h\, \lambda,
\ee 
we derive
$$
D\Upsilon^{\text{\rm sm}}(\lambda,\gamma)(\alpha, \eta) = \frac{D\eta}{dt} - DR_\lambda(\eta) 
-\delta_\lambda(R_{\lambda})(\alpha)
$$
with $\eta \in \Gamma(\gamma^*TM)$ as in \eqref{eq:1st-variation}.

We now recall the formula
$$
\delta_\lambda(R_\lambda)(\alpha)= X_{(h;\lambda)}
$$
from Corollary \ref{cor:Yalpha-small}.
Once we have this formula, the rest of the proof is the same as that of
Theorem  \ref{thm:mapping-transversality} and so omitted.
\end{proof}

Now we consider the pointed moduli spaces and the evaluation and 1-jet evaluation maps.
 We then consider the small phase space analogues of
 the parameterized evaluation map $\Ev_0^H$ at $t = 0$ defined in \eqref{eq:Ev0H}
 and of the \emph{augmented 1-jet evaluation map} $\dot{\Ev}_i^H$ given in \eqref{eq:1jet-evaluationmap} 
for $i = 1, \cdots, k$, that is, 
$$
\dot{\Ev}_+^H(\lambda, \gamma, \vec t) 
= \left(\dot{\Ev}_1^H(\lambda,\gamma, t_1), \cdots, \dot{\Ev}_k^H(\lambda,\gamma, t_k)\right)
$$
for a triple $(\lambda, \gamma, \vec t)$ with $\vec t = ( t_1, \cdots, t_k)$ 
with $0 < t_1 < \cdots <t_k  $  \emph{such that $\gamma(t_i) \neq \gamma(t_j)$ whenever $i \neq j$.}

An examination of the proof of evaluation and 1-jet evaluation transversalities
given in Section \ref{sec:1jet-evaluation} shows that 
once Corollary \ref{cor:Yalpha-small} is provided, all the proofs therein
can be duplicated and so establishes the 1-jet evaluation transversality
stated in Proposition \ref{prop:1jet-evaluation}.  The proofs are the same 
except the change $\CM_k^\R(M)$ to $\CM_k^\R(M,\xi)$.

\section{Wrap-up of the proof of Theorem \ref{thm:nonprojectable-small-intro}}
\label{sec:wrapup-small}

In this section, combining all the results obtained in the previous sections of Part \ref{part:small}, 
we wrap-up the proof of Theorem \ref{thm:nonprojectable-small-intro}, which we restate here.

\begin{thm}\label{thm:nonprojectable-small} Let $(M,\xi)$ be a coorientable contact manifold. Then
there exists a residual subset
 $$
 \mathfrak{C}^{\text{\rm np}} (M,\xi) \subset \mathfrak{C}(M,\xi)
 $$
consisting of $\lambda$ for which  we have
$$
\{H \mid dH \not \equiv 0 \,\, \& \,\,  \, R_\lambda[H] = 0\} = \emptyset.
$$
We call any such contact form \emph{non-projectible in small phase space}.
\end{thm}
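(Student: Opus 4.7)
The plan is to run the same Sard--Smale/Fredholm package used for Theorem \ref{thm:nonprojectable-intro}, now over the restricted parameter space $\mathfrak{C}(M,\xi)$, taking advantage of the tools already assembled in Part \ref{part:small}. Specifically, I would first observe that the parameterized moduli space of Reeb trajectories with $k$ marked points, subject to the one--jet constraint $\gamma(0) \in \Sigma^{\text{\rm reg}}_{(H;\lambda)}$ and $\dot \gamma(t_i) \in T\Sigma^{\text{\rm reg}}_{(H;\lambda)}$ for $i=1,\ldots,k$, assembles into a universal moduli space
\[
\CM_{H;k}^{\Sigma^{\text{\rm reg}}}(M,\xi) \subset \mathfrak{C}(M,\xi) \times \CP_M^\R \times \operatorname{Conf}_k(\R_+).
\]
By Proposition \ref{prop:mapping-transversality-small} together with Proposition \ref{prop:1jet-evaluation-small} (which ensures transversality of $\Ev_0 \times \dot{\Ev}_+$ to $\Sigma_H^{\text{\rm reg}} \times (T\Sigma_H^{\text{\rm reg}})^k$ over the small phase space), this is a smooth Banach manifold above the open dense subset $\mathfrak{C}_H^{\text{\rm tr}}(M,\xi)$.

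Next I would reproduce verbatim the index count from Theorem \ref{thm:negative-index}: starting from a Fredholm index $2n+1$ for the parameterized Reeb trajectory equation, imposing the $k$ zeroth--order constraints (one per marked point, codimension $k$ in $M^k$) and the $k$ derivative constraints (each of codimension $1$ since $T\Sigma^{\text{\rm reg}}_{(H;\lambda)}$ has codimension $1$ in $TM$ along $\Sigma^{\text{\rm reg}}_{(H;\lambda)}$) yields Fredholm index $2n+1-k$ for the projection
\[
\Pi_{H,k}: \CM_{H;k}^{\Sigma^{\text{\rm reg}}}(M,\xi) \to \mathfrak{C}(M,\xi).
\]
Choosing $k = 2n+2$ makes this index negative, so by Sard--Smale the set $\mathfrak{C}_{H,2n+2}^{\text{\rm reg}}(M,\xi)$ of regular values is residual, and for any such value the fibre $\Pi_{H,2n+2}^{-1}(\lambda)$ is empty. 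Set
\[
\mathfrak{C}_{H,2n+2}^{\text{\rm tr}}(M,\xi) := \mathfrak{C}_H^{\text{\rm tr}}(M,\xi) \cap \mathfrak{C}_{H,2n+2}^{\text{\rm reg}}(M,\xi),
\]
which is dense open, and then intersect along a countable $C^\infty$-dense sequence $\{H_\ell\}$ of non--constant smooth functions to define
\[
\mathfrak{C}^{\text{\rm np}}(M,\xi) := \bigcap_{\ell \geq 1} \mathfrak{C}_{H_\ell,2n+2}^{\text{\rm tr}}(M,\xi).
\]

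Finally, I would carry over the contradiction argument of Proposition \ref{prop:emptyset} verbatim. Fix $\lambda \in \mathfrak{C}^{\text{\rm np}}(M,\xi)$ and suppose, aiming for contradiction, that some $H$ with $dH \neq 0$ satisfies $R_\lambda[H] = 0$. By Lemma \ref{eq:strict-contact-H}, $\psi_H^t$ is a strict contact flow, so $g_{(H;\lambda)} \equiv 0$ and $[R_\lambda, X_{(H;\lambda)}] = 0$; in particular, every Reeb trajectory issued at a point of the hypersurface $\Sigma^{\text{\rm reg}}_{(H;\lambda)}$ (nonempty by Theorem \ref{thm:regular-SigmaH-small}) stays inside it for all time and thus automatically satisfies the constraint \eqref{eq:g-constraint} at every tuple of marked points, giving a nonempty $\CM_{H;2n+2}^{\Sigma^{\text{\rm reg}}}(M,\xi;\lambda)$. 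But then by the density of $\{H_\ell\}$ and the openness of the defining conditions \eqref{eq:g-constraint} in the $C^\infty$ topology (Remark \ref{rem:gH-10}), this persists for some $H_\ell$ in an appropriate $C^\infty$ neighborhood of $H$, contradicting $\lambda \in \mathfrak{C}_{H_\ell,2n+2}^{\text{\rm reg}}(M,\xi)$. Taking the contrapositive yields the claim.

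The main obstacle I expect is verifying that the Fredholm and transversality machinery truly survives the restriction $\alpha = h\lambda$ on variations of the contact form; that is, that the proof of Proposition \ref{prop:1jet-evaluation-small} really does produce the same surjectivity as in the big phase space, because the ampleness statements (Lemma \ref{lem:ampleness-W0-small} and the variational formula of Corollary \ref{cor:Yalpha} for $\alpha = h\lambda$) must supply enough directions in $T(J^1M)$ and in $TM$ along $\gamma$. Since $h \in C^\infty(M,\R)$ is still an arbitrary smooth function, the $\xi$--component $X^\pi_{(h;\lambda)}$ and the Reeb coefficient $h$ can be prescribed independently at distinct points of $\gamma$, and this is precisely what makes the proof go through without loss compared to the big phase space argument.
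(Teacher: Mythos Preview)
Your proposal is correct and follows essentially the same approach as the paper: it invokes Propositions \ref{prop:mapping-transversality-small} and \ref{prop:1jet-evaluation-small}, reproduces the index count $2n+1-k$ of Theorem \ref{thm:negative-index-small}, defines $\mathfrak{C}^{\text{\rm np}}(M,\xi)$ as the countable intersection \eqref{eq:CMnp-small}, and closes with the same contradiction via Reeb trajectories confined to the discriminant. The only cosmetic difference is that the paper packages the density/openness step as a separate Proposition \ref{prop:emptyset-small} (arguing that \emph{emptiness} of the constrained moduli space is open in $H$ and hence holds for all $H$), whereas you invoke openness in the reverse direction (nonemptiness for the hypothetical $H$ persists to a nearby $H_\ell$); both orderings rely on Remark \ref{rem:gH-10} and are equivalent.
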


We also have the following small phase space counterpart of Theorem \ref{thm:negative-index},
the proof of which is again the same with the replacement of the big phase space by the
small phase space.

\begin{thm}[Fredholm property] \label{thm:negative-index-small} 
Let $H$ be a non-constant autonomous Hamiltonian
and $g_{(H;\lambda)}$ the conformal exponent of $(\lambda,H)$.
 Consider the projection map
$$
\Pi_k:\CM_{\mathfrak{C}^{\circ;\text{\rm map}}(\xi);k}^{\Sigma(c)} \to \mathfrak{C}^{\text{\rm map}}(M,\xi).
$$
Then we have the following:
\begin{enumerate}
\item $\Pi_{H,k}$ is a (nonlinear) Fredholm map of index $2n+1 - k$. In particular, the index is negative
provided $k \geq 2n+2$.
\item For any regular value of  $\Pi_{H,k}$,  the set
$$
\CM_k^{\circ,\R}(\lambda; M, {\Sigma_H(c)}) 
=  \CM_{\mathfrak{C}^{\text{\rm map}}(\xi),k}^{\circ,\R} \cap \Pi_{H,k}^{-1}(\lambda)
 $$
is empty for any $k \geq 2n+2$.
\end{enumerate}
\end{thm}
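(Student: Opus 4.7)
The plan is to duplicate the proof of Theorem \ref{thm:negative-index} in the small phase space setting, relying on the fact that the crucial first-variation formulas in Corollary \ref{cor:Yalpha} and Lemma \ref{lem:deltalambda-Gamma} depend only on the Reeb component $h_\alpha = \alpha(R_\lambda)$ of the variation $\alpha = V_\alpha^\pi \intprod d\lambda + h_\alpha \lambda$. Since the small phase space tangent variations are precisely of the form $\alpha = h\, \lambda$ by Lemma \ref{lem:Tlambda-xi}, the reduction $V_\alpha^\pi = 0$ is built in and Lemma \ref{lem:Yalpha-small} provides everything we needed in the big phase space setting.

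First I would set up the Fredholm framework exactly as in the proof of Theorem \ref{thm:negative-index}. Given a point $(\lambda, \gamma, \vec t) \in \CM_{H;k}^{\Sigma^{\text{\rm reg}}}(M,\xi)$ and a tangent triple
\[
(\alpha, \eta, \vec a) \in T_{(\lambda,\gamma,\vec t)} \CM_{H;k}^{\Sigma^{\text{\rm reg}}}(M,\xi),
\]
with $\alpha = h\,\lambda$ now constrained to the small phase space, an element of $\ker d\Pi_{H,k}$ satisfies $\alpha = 0$ together with
\[
\frac{D\eta}{dt} - DR_\lambda(\gamma)\eta - Y_\alpha(\gamma) = 0
\]
and the constraints $d\dot{\Ev}_i(\alpha,\eta,\vec a) = a_i\, \dot\gamma(t_i)$ at each marked point. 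Linear ODE theory then bounds $\dim \ker d\Pi_{H,k} \leq 2n+1$. The $L^2$-cokernel is controlled by the adjoint first-order linear ODE obtained exactly as in \eqref{eq:adjoint}, and finite dimensionality follows.

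Next I would carry out the index bookkeeping, which is unchanged from the big phase space case: the smooth Reeb moduli space $\CM^\R(\lambda,M)$ contributes $2n+1$ (by Proposition \ref{prop:mapping-transversality-small}), the $k$ free marked points contribute $k$, the $0$-jet constraint $\gamma(t_i) \in \Sigma^{\text{\rm reg}}_{(H;\lambda)}$ subtracts $k$, and the $1$-jet tangency constraint $\dot\gamma(t_i) \in T\Sigma^{\text{\rm reg}}_{(H;\lambda)}$ subtracts another $k$, giving
\[
\mathrm{index}\, \Pi_{H,k} = (2n+1) + k - k - k = 2n+1 - k.
\]
The constraints make sense by Theorem \ref{thm:regular-SigmaH-small}, which guarantees that $\Sigma_{(H;\lambda)}^{\text{\rm reg}}$ is a genuine smooth hypersurface for $\lambda$ in the dense open subset $\mathfrak{C}_H^{\text{\rm tr}}(M,\xi)$, and the transversality of the constraints is exactly Proposition \ref{prop:1jet-evaluation-small}. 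For $k \geq 2n+2$ the index becomes strictly negative, and Sard-Smale (applied in the $C^\varepsilon$ framework as explained in Remark \ref{rem:offshell-framework}) gives emptyness of $\Pi_{H,k}^{-1}(\lambda)$ for every regular value $\lambda$.

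The main thing to check, and the only spot where the small phase space reduction could potentially go wrong, is that Lemma \ref{lem:ampleness-W0} still holds under the restricted variation $\alpha = h\,\lambda$. This is precisely Lemma \ref{lem:Yalpha-small}: its formula
\[
Y_\alpha(\gamma(0)) = -X^\pi_{(h;\lambda)}(\gamma(0)) - h(\gamma(0))\, R_\lambda(\gamma(0))
\]
allows $h \in C^\infty(M,\R)$ to be chosen arbitrarily, and the independence of the $\xi$-component $X^\pi_{(h;\lambda)}$ from the Reeb component $h\, R_\lambda$ (via Lemma \ref{lem:Ham-ampleness}) ensures that $W_{\gamma(0)} = T_{\gamma(0)}M$ remains full. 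Once ampleness is in place, the evaluation transversality arguments of Section \ref{sec:1jet-evaluation} go through verbatim with the replacement $\mathfrak{C}(M) \leadsto \mathfrak{C}(M,\xi)$, and the rest of the proof assembles as in the big phase space case.
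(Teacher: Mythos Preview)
Your proposal is correct and takes essentially the same approach as the paper, which simply states that the proof is the same as that of Theorem \ref{thm:negative-index} with the big phase space replaced by the small phase space. You have correctly identified the one genuine checkpoint---that the ampleness of $Y_\alpha$ survives the restriction $\alpha = h\,\lambda$---and resolved it via Lemma \ref{lem:Yalpha-small}, exactly as the paper's surrounding small phase space discussion intends.
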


As before in the big phase space, we define  a dense open subset
$$
\mathfrak{C}_{2n+2}^{\text{\rm reg}}(M,\xi)
$$
to be the set of regular values of $\Pi_k$ above.  Then we obtain 
\be\label{eq:CMnp-small}
 \mathfrak{C}^{\text{\rm np}}(M,\xi) =  \mathfrak{C}^{\text{\rm reg}_{2n+2}}(M,\xi) 
 \cap  \mathfrak{C}^{\text{\rm map}}(M,\xi).
\ee
This subset $\mathfrak{C}^{\text{\rm np}}(M,\xi) \subset \mathfrak{C}(M,\xi)$ 
is what we are searching for and that is mentioned in Theorem \ref{thm:nonprojectable-intro}.
Obviously it is a residual subset by construction.

Now the rest of the proof of Theorem \ref{thm:nonprojectable-small-intro} is the same as that of
Theorem \ref{thm:nonprojectable-intro} with $\mathfrak{C}(M)$ replaced by $\mathfrak{C}(M,\xi)$
and so omitted. This finishes the proof of Theorem \ref{thm:nonprojectable-small-intro}.

\appendix

\section{Proof of Lemma \ref{lem:Tlambda}}

By the standing hypothesis that $M$ is \emph{contactible}, there is at least one
contact structure $\xi$. We will describe the space of all nearby contact structures of $\xi$.

We start with the notion of \emph{contact vector space} summarizing the exposition of
 \cite[p. 133-134]{loose} with slight change of notations.

\begin{defn}[Contact vector space]  A \emph{contact vector space} is a triple $(V,H,\Omega)$ such that
 $V$ is a vector space equipped with \emph{contact structure} which 
is a pair $(H, \Omega)$ such that $H \subset V$ is a codimension 1 subspace and a bilinear form
$$
\Omega: H \times H \to V/H.
$$
\end{defn}

For the standard contact vector space 
$$
(V_0, H_0, \Omega_0): = (\R^{2n+1}, \R^{2n} \times \{0\}, \omega_0 \oplus 0)
$$
withe the identification $\R^{2n+1}/\R^{2n} \times \{0\} = \R$, 
the \emph{contact group} $\Cont_{2n+1}(\R)$ is the subgroup
$$
\{A \in GL(\R^{2n+1}) \mid A(H \times \{0\}) = H \times \{0\}, \, A|_H \in Sp(R^{2n} \times \{0\})\}.
$$
Consider the Grassmannian $\operatorname{Gr}_{2n}(\R^{2n+1})$  consisting of 
subspaces of codimension 1. Then any nearby contact vector structures of $(V_0, H_0, \Omega_0)$ 
on $\R^{2n+1}$ is given by pair $(H,\Omega)$ such that $H$ is a codimension 1 subspace 
near to $H_0 \in GL(\R^{2n+1})$, and a symplectic bilinear form $\Omega$ thereon.

For each $H$, we denote by $\Symp(H)$ the set of symmetric bilnear forms on $H$
equipped with the natrual topology thereon. We then form the union
$$
\mathfrak{C}(\R^{2n+1}): =\bigcup_{H \in   GL(\R^{2n+1}} \{H\} \times \Symp(H)
$$
which is a fiber bundle over $GL(\R^{2n+1})$. It carries a natural topology induced by that of
$GL(\R^{2n+1})$ and $\Symp(H)$ such that the projection 
$\pi: \mathfrak{C}(\R^{2n+1}) \to GL(\R^{2n+1})$ is continuous. By the same token, we
equip them with obvious smooth structures so that $\pi$ becomes a smooth fibration.
This being said the tangent space 
$T_{(H_0,\Omega_0)}\mathfrak{C}(\R^{2n+1})$ or the space of infinitesimal
deformation space can be identified with
$$
(\R^{2n})^* \times \text{\rm Symp}^2(\R^{2n}) \cong \R^{2n} \times \R^{2n(2n-1)/2}.
$$
Now we globalize the above discussion to the \emph{contactible} manifolds. 
Then a  contact structure $\xi$ on $M$ is is a smooth section of the bundle 
$$
\mathfrak{C}(M) : = \Gamma(\mathfrak{C}(TM)) \to M.
$$
This completes the discussion of space of contact structures which clearly
shows that it carries a Frechet manifold structure modeled by $(\R^{2n})^* \times \text{\rm Symp}^2(\R^{2n})$.
This completes the description of Frechet manifold structure of $\mathfrak{C}(M)$.
 
\bigskip

\def\cprime{$'$}
\providecommand{\bysame}{\leavevmode\hbox to3em{\hrulefill}\thinspace}
\providecommand{\MR}{\relax\ifhmode\unskip\space\fi MR }
% \MRhref is called by the amsart/book/proc definition of \MR.
\providecommand{\MRhref}[2]{%
  \href{http://www.ams.org/mathscinet-getitem?mr=#1}{#2}
}
\providecommand{\href}[2]{#2}

%\bibliographystyle{amsalpha}

%\bibliography{biblio2}

\end{document}